\numberwithin{equation}{section}
\DeclareMathAlphabet{\mathcal}{OMS}{cmsy}{m}{n}
\DeclareSymbolFont{largesymbols}{OMX}{cmex}{m}{n}
\newcommand{\tref}[2]{\hyperref[#1]{\textsuperscript{\ref{#1}}\pdfmarkupcomment{\phantom{\ref{#1}}}{#2}}}
\newtheorem{theorem}{Theorem}[section]
\newtheorem{proposition}[theorem]{Proposition}
\newtheorem{lemma}[theorem]{Lemma}
\newtheorem{corollary}[theorem]{Corollary}
\newtheorem{remark}[theorem]{Remark}
\begin{document}
	
	\title{The Logarithmic Laplacian on General Graphs}
	
	\author{Rui Chen}
	\address{School of Mathematical Sciences, Fudan University, Shanghai 200433, China}
	\email{chenrui23@m.fudan.edu.cn}
	
	\author{Wendi Xu}
	\address{Shanghai Institute for Mathematics and Interdisciplinary Sciences, Shanghai 200433, China}
	\email{xuwendi@simis.cn}
	
\begin{abstract}
	We establish, for the first time, a Bochner-type integral representation for the logarithmic Laplacian on weighted graphs. Assuming stochastic completeness of the underlying graph, we further derive an explicit pointwise formula for this operator:
{\scriptsize
	\[
	\log(-\Delta)\:u(x)
	=\frac{1}{\mu(x)}\sum_{y\neq x}W_{\log}(x,y)\,(u(x)-u(y))
	-\frac{1}{\mu(x)}\sum_{y}W(x,y)\,u(y)
	+\Gamma'(1)\,u(x).
	\]
}In the case of weighted lattice graphs with uniformly positive vertex measures, we obtain sharp two-sided bounds for the associated logarithmic kernel. Additionally, we prove that the logarithmic Laplacian is unbounded on $\ell^{2}$, and we present an alternative derivation of its pointwise form. Moreover, for every $1 < p \leq \infty$ and all $u \in C_c(\mathbb{Z}^{d})$, we establish a strong convergence in $\ell^{p}$:
	{\scriptsize\[\frac{(-\Delta)^{s} u - u}{s} \longrightarrow \log(-\Delta) \:u \quad \text{as } s \to 0^{+}.\]}Finally, on the standard lattice $\mathbb{Z}^{d}$, we compute the Fourier multipliers corresponding to both the fractional Laplacian and the logarithmic Laplacian, and derive exact large-time behavior and off-diagonal asymptotics of the associated diffusion kernels, including all sharp asymptotic constants.
\end{abstract}

\maketitle

\bigskip
\noindent{\normalfont\small\textbf{Keywords:} Fractional Laplacian, Logarithmic Laplacian, Lattice Graphs,  Diffusion Kernel}

\setcounter{tocdepth}{2}
\tableofcontents

\section{Introduction and Main Results}

In the past decade, there has been a surge of interest in nonlocal operators, both for their rich analytic structure and for wide-ranging applications in probability, geometry, and applied mathematics. Chief among these are the fractional Laplacian $\left(-\Delta\right)^s$ and its logarithmic counterpart $\log\left(-\Delta\right).$ On \(\mathbb{R}^d\), the fractional Laplacian has been intensively studied in \cite{caffarelli2007extension,di2012hitchhikerʼs,kwasnicki2017ten,chen2012dirichlet,ros2014dirichlet,stinga2010extension,servadei2012mountain,servadei2013variational,kwasnicki2012eigenvalues,feulefack2022small}. More recently, Chen and Weth \cite{chen2019dirichlet} were the first to introduce the logarithmic Laplacian in $\mathbb{R}^d$ as the derivative at \(s=0\) of \((-\Delta)^s\) and to derive its explicit integral representation. Since then, the logarithmic Laplacian has attracted considerable interest, resulting in  numerous applications and further developments in PDEs, spectral analysis and geometric perimeter (see, e.g., \cite{chen2023bounds,laptev2021spectral,chen2024cauchy,jarohs2020new,chen2023extension,zhang2021direct,hernandez2024optimal,de20190}).

More generally, there has been growing interest in defining and analyzing both the fractional Laplacian and the logarithmic Laplacian on Riemannian manifolds, where curvature and volume growth introduce novel geometric challenges. To define the fractional Laplacian, two main approaches are the Caffarelli–Silvestre extension and the heat semigroup method via functional calculus. A number of works have begun to develop a systematic theory in this setting, covering both compact manifolds \cite{d2016fractional,feizmohammadi2024fractional,li2024inverse,de2017fractional} and noncompact manifolds \cite{banica2015some,caselli2023asymptotics,caselli2024fractional,bhowmik2022extension,papageorgiou2024asymptotic,alonso2018integral,papageorgiou2024large,kim2022harnack,anker2024schr}.

In contrast, there have been no results on the logarithmic Laplacian on manifolds until very recently.  To the best of our knowledge, only two papers have addressed the related topics: in \cite{pramanik2025anisotropic},  \(\log(-\Delta + m\,\mathrm{I})\) for \(m>1\) was defined on closed manifolds, and recently in \cite{chen2025logarithmic}, we have just given a unified definition of \(\log(-\Delta)\) on arbitrary complete manifolds.

The study of geometric and analytic properties of graphs has a long tradition \cite{chung1997spectral,chung1999coverings,bauer2017sharp,wojciechowski2009heat,wojciechowski2008stochastic,horn2019volume,bauer2015li}. Lately, increasing studies have focus on partial differential equations on graphs: the discrete Schrödinger equation and its variants have been explored in \cite{hua2023existence,yang2024normalized,grigor2017existence,zhang2018convergence}; the fractional Schrödinger equations on graphs have been studied in \cite{zhang2024fractional1,zhang2024fractional,ciaurri2018nonlocal} and the logarithmically nonlinear Schrödinger equations in \cite{chang2023convergence,he2024existence,ardila2017existence,chang2023ground}.  Despite these advances, a notion of the logarithmic Laplacian itself has not yet been developed in the discrete setting.  

 On a finite, connected, weighted graph \(G=(V,E,\mu,w)\) with $N$ vertices, the normalized graph Laplacian \(-\Delta\) is a finite‐dimensional, self-adjoint matrix with eigenvalues $0=\lambda_0<\lambda_1\le\cdots\le\lambda_{N-1}\,,$ 
and the corresponding orthonormal eigenfunctions \(\{\varphi_j\}\).  By functional calculus, it is nature to define $$(-\Delta)^s u
=\sum_{j=0}^{N-1}\lambda_j^s\langle u,\varphi_j\rangle \varphi_j,\quad\log(-\Delta)u
=\sum_{j=1}^{N-1} \log(\lambda_j)\,\langle u,\varphi_j\rangle\,\varphi_j,$$
where $\langle \cdot,\cdot\rangle$ represents the inner product in $\ell^2\left(V,\mu\right)$.

On an infinite graph, such a global spectral decomposition is generally unavailable.  Instead, one uses the Bochner‐integral representation for the fractional Laplacian:
\[
(-\Delta)^s u
=\frac{s}{\Gamma(1-s)}\int_{0}^{\infty}\bigl(u-e^{t\Delta}u \bigr)\,t^{-1-s}\,dt,
\] 
which converges for \(u\) in suitable domains whenever \(G\) is stochastically complete.  Under this hypothesis, one obtains the pointwise kernel formula \cite{zhang2024fractional1,wang2023eigenvalue}:
\[
(-\Delta)^s u(x)
=\sum_{y\in V}W_s(x,y)\,\bigl(u(x)-u(y)\bigr),
\]
where
\[
W_s(x,y)
=\frac{s}{\Gamma(1-s)}\int_{0}^{\infty}p(t,x,y)\,t^{-1-s}\,dt,
\]
and \(p(t,x,y)\) is the continuous‐time heat kernel.  

A natural question is how to generalize the logarithmic Laplacian to infinite graphs. To the best of our knowledge, no one has yet defined the logarithmic Laplacian on infinite graphs, and correspondingly, there is no prior work on its properties in graph setting.  In this paper, we fill this gap by introducing, for the first time, a discrete Bochner‐integral formula for the logarithmic Laplacian, see Theorem \ref{bochlog}:
\[
	\log(-\Delta)\,u
=\int_{0}^{\infty}\frac{e^{-t}u - e^{t\Delta}u}{t}\,dt,
\]
which is well‐defined on any weighted graph (under mild growth conditions on \(u\)).  Moreover, on stochastically complete graphs, this definition yields a pointwise representation. Since we do not impose any global curvature or volume‐growth assumptions that would guarantee long‐time heat‐kernel decay, we first assume the following integrability condition:
\begin{equation}\label{integr}
	\int_{1}^{\infty}\frac{p(t,x,y)}{t}\,dt<\infty
	\quad\text{for all }x,y\in V.
\end{equation}
In later sections, we will show that this hypothesis is satisfied in a wide class of graphs.

\begin{theorem}\label{pointlog11}
	Let \(G=(V,E,\mu,w)\) be an infinite, connected, stochastically complete weighted graph, and $p(t,x,y)$ be the heat kernel of $G$ satisfying (\ref{integr}). For \(u\in C_c(V)\), the logarithmic Laplacian can be written pointwise as
	\begingroup\small	\[
	\bigl(\log(-\Delta)u\bigr)(x)
	=\frac{1}{\mu(x)}\sum_{\substack{y\in V,\:y\neq x}}
	W_{\log}(x,y)\bigl(u(x)-u(y)\bigr)
	\;-\;\frac{1}{\mu(x)}\sum_{y\in V}W(x,y)\,u(y)
	\;+\;\Gamma'(1)\,u(x),
	\]
	\endgroup
	where the positive, symmetric kernels are given by
	\[
	W_{\log}(x,y)
	:=\mu(x)\,\mu(y)\int_{0}^{1}\,\frac{p(t,x,y)}{t}dt,
	\quad
	W(x,y)
	:=\mu(x)\,\mu(y)\int_{1}^{\infty}\frac{p(t,x,y)}{t}dt.
	\]
\end{theorem}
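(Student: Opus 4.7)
The plan is to start from the Bochner-type identity of Theorem \ref{bochlog},
$$\log(-\Delta)u(x) = \int_0^\infty \frac{e^{-t} u(x) - (e^{t\Delta} u)(x)}{t}\,dt,$$
split the integration at $t = 1$, process each piece using the heat-kernel expansion $(e^{t\Delta}u)(x)=\sum_y p(t,x,y)u(y)\mu(y)$, and finally collect the coefficient of $u(x)$ via a classical integral representation of $-\gamma$.

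For the short-time piece $\int_0^1$, stochastic completeness $\sum_y p(t,x,y)\mu(y)=1$ allows me to rewrite
$$e^{-t}u(x) - (e^{t\Delta}u)(x) = (e^{-t}-1)u(x) + \sum_{y\neq x} p(t,x,y)\mu(y)\bigl(u(x)-u(y)\bigr),$$
so that dividing by $t$ and exchanging sum with integral yields $u(x)\int_0^1(e^{-t}-1)/t\,dt$ together with $\mu(x)^{-1}\sum_{y\neq x}W_{\log}(x,y)(u(x)-u(y))$. For the long-time piece $\int_1^\infty$, the compact support of $u$ reduces $(e^{t\Delta}u)(x)$ to a finite sum, so hypothesis (\ref{integr}) and Fubini produce
$$\int_1^\infty \frac{e^{-t}u(x) - (e^{t\Delta}u)(x)}{t}\,dt = u(x)\int_1^\infty\frac{e^{-t}}{t}\,dt - \mu(x)^{-1}\sum_y W(x,y)u(y).$$

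Adding the two pieces, the total coefficient of $u(x)$ becomes $\int_0^1 (e^{-t}-1)/t\,dt + \int_1^\infty e^{-t}/t\,dt$, the standard Frullani-type representation of $-\gamma = \Gamma'(1)$; this supplies exactly the $\Gamma'(1)u(x)$ term and matches the statement.

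The main technical obstacle lies in the short-time exchange. The ambient integrand $(e^{-t}u(x)-(e^{t\Delta}u)(x))/t$ is bounded at $t=0^+$ only through the cancellation $e^{-t}u(x)-(e^{t\Delta}u)(x)=O(t)$, while separating the two summands after the stochastic-completeness rewrite requires in addition that $(1-p(t,x,x)\mu(x))/t$ be bounded near $0$. I would derive $1-p(t,x,x)\mu(x)=O(t)$ by Taylor-expanding the semigroup at $t=0$ via $\partial_t e^{t\Delta}|_{t=0}=\Delta$, which yields $\sum_{y\neq x}\mu(y)\int_0^1 p(t,x,y)/t\,dt<\infty$. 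Combined with the trivial bound $|u(x)-u(y)|\leq 2\|u\|_\infty$, this legitimizes Fubini on the short-time piece and closes the argument.
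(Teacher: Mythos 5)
Your proposal is correct and follows essentially the same route as the paper's proof: the same split of the Bochner integral at $t=1$, the same stochastic-completeness rewrite of the short-time integrand, the same bound $1-p(t,x,x)\mu(x)=O(t)$ via the time derivative of the heat kernel to justify the interchange, and the same use of the Frullani-type identity for $-\gamma$ (Lemma \ref{euler}) to produce the $\Gamma'(1)u(x)$ term. No substantive differences.
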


These novel representations lay the groundwork for the analysis of differential and nonlocal equations invloving the logarithmic Laplacian, enable a detailed spectral study of the operator, and open the door to a wide range of futher applications on graphs.

To carry out a more refined analysis of the logarithmic Laplacian on graphs, we need sharp estimates on its kernel.  This in turn requires precise control of the heat kernel under additional structural assumptions on the graph.  In what follows, we focus on weighted lattice graphs \(\mathbb{Z}^d=(V,E,\mu,w)\) satisfying
\[
0<\mu_{\min}:=\inf_{x\in\mathbb{Z}^d}\mu(x)\le 
\mu_{\max}:=\sup_{x\in\mathbb{Z}^d}\mu(x)<\infty,
\]
equipped with the normalized Laplacian.  Under these hypotheses, one has the following heat‐kernel bounds (see subsection \ref{rehe}):
\begin{equation}\label{gaussian}
	\frac{C'}{V\bigl(x,\sqrt{t}\bigr)}
	\exp\!\biggl(-c'\,\frac{d(x,y)^2}{t}\biggr)
	\;\le\;
	p(t,x,y)
	\;\le\;
	\frac{C}{V\bigl(x,\sqrt{t}\bigr)},\quad  t>0,
\end{equation}
together with the refined upper bound estimates:
\begin{enumerate}
	\item If \(t \ge d(x,y)>0\), then
	\begin{equation}\label{222}
		p(t,x,y)\;\le\;\frac{C}{V(x,\sqrt{t})}\exp\!\Bigl(-C'\,\frac{d(x,y)^2}{t}\Bigr).
	\end{equation}
	\item If \(d(x,y)>t>0\), then
	\begin{equation}\label{444}
		p(t,x,y)\;\le\;\frac{C}{V(x,\sqrt{t})}\,e^{d(x,y)}\Bigl(\frac{t}{d(x,y)}\Bigr)^{d(x,y)}.
	\end{equation}
\end{enumerate}
In particular, for \(x\ne y,\:0 < t \le \delta,\:\delta<1\),
\begin{equation}\label{333}
	p(t,x,y)\;\le\;\frac{C}{d(x,y)!}.
\end{equation}

Using these bounds, we establish the following two propositions.

\begin{proposition}\label{prop:Wlog11}
	The following identity hold:
	\begin{equation}\label{wlog}
		\sum_{y\ne x}W_{\log}(x,y)
		=\mu(x)\int_0^1\bigl(1-p(t,x,x)\mu(x)\bigr)\,\frac{dt}{t}.
	\end{equation}
	Moreover, we have the following estimate:
	\begin{equation}\label{wlogg}
		W_{\log}(x,y)\lesssim \frac{e^{d(x,y)}}{d(x,y)^{d(x,y)+1}},\quad x\ne y.
	\end{equation}
\end{proposition}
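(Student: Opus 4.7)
The plan is to handle the two assertions of Proposition~\ref{prop:Wlog11} separately: the identity (\ref{wlog}) follows almost immediately from Tonelli's theorem combined with stochastic completeness, while the pointwise upper bound (\ref{wlogg}) is obtained by inserting the refined heat kernel estimate (\ref{444}) into the defining integral for $W_{\log}(x,y)$ and computing.

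For the identity, I would start from the definition of $W_{\log}(x,y)$ and, since the integrand $\mu(x)\mu(y)p(t,x,y)/t$ is non-negative, use Tonelli to interchange the sum over $y \ne x$ with the integral over $t \in (0,1)$:
\[
\sum_{y \ne x} W_{\log}(x,y) = \mu(x)\int_0^1 \frac{1}{t}\sum_{y \ne x}\mu(y)p(t,x,y)\,dt.
\]
Stochastic completeness of $G$ gives $\sum_{y \in V}\mu(y)p(t,x,y) = 1$ for all $t>0$, so the inner sum equals $1-\mu(x)p(t,x,x)$, producing (\ref{wlog}). Finiteness of the left-hand side will follow a posteriori from (\ref{wlogg}), since on $\mathbb{Z}^d$ the sphere of radius $n$ has at most polynomially many points while $e^n/n^{n+1}$ decays super-exponentially.

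For the upper bound, fix $x\ne y$ and set $n := d(x,y) \ge 1$. For every $t\in(0,1)$ one has $t<1\le n$, so the refined estimate (\ref{444}) is applicable and yields
\[
p(t,x,y) \le \frac{C}{V(x,\sqrt{t})}\,e^n\,\Bigl(\frac{t}{n}\Bigr)^n.
\]
Because $\mu\ge\mu_{\min}>0$, the ball volume satisfies $V(x,\sqrt{t})\ge\mu(x)\ge\mu_{\min}$, so $1/V(x,\sqrt{t})$ is uniformly bounded on this range of $t$. Multiplying by $\mu(x)\mu(y)/t \le \mu_{\max}^2/t$ and integrating gives
\[
W_{\log}(x,y) \le \frac{C\mu_{\max}^2}{\mu_{\min}}\,\frac{e^n}{n^n}\int_0^1 t^{n-1}\,dt = \frac{C\mu_{\max}^2}{\mu_{\min}}\,\frac{e^n}{n^{n+1}},
\]
which is (\ref{wlogg}).

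The computation is essentially one line once (\ref{444}) is in hand, so the main obstacle is not in this proposition itself but lies upstream in the derivation of the sharp heat-kernel bound (\ref{444}). The bookkeeping points that still require attention here are: (i) the strict inequality $n>t$ required by (\ref{444}) holds throughout $(0,1)$ precisely because the $W_{\log}$ integral is truncated at $t=1$, which is why the cutoff at $1$ in the definition of $W_{\log}$ is crucial; and (ii) the elementary integral $\int_0^1 t^{n-1}\,dt = 1/n$ is what upgrades the exponent in the denominator from $n$ to $n+1$.
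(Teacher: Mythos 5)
Your proposal is correct and takes essentially the same route as the paper's proof: the identity \eqref{wlog} via interchanging sum and integral together with stochastic completeness, and the bound \eqref{wlogg} by inserting estimate \eqref{444} with $V(x,\sqrt{t})\ge\mu_{\min}$ and computing $\int_0^1 t^{n-1}\,dt=1/n$. Your added remarks on why \eqref{444} applies for $t\in(0,1)$ and on the a posteriori finiteness of the sum are sound but not needed beyond what the paper does.
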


\begin{proposition}\label{prop:Wxy11}
	The following estimates hold:
	\[
	d(x,y)^{-d}\lesssim
	W(x,y)\lesssim d(x,y)^{-d},
	\quad x\ne y,
	\]
	and $$	W(x,x)\sim 1.$$
\end{proposition}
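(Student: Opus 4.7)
The plan is to estimate the integral $\int_1^\infty p(t,x,y)/t\,dt$ directly, using the two-sided Gaussian bound (\ref{gaussian}) together with the refined upper bounds (\ref{222}) and (\ref{444}). Throughout, I exploit the volume asymptotic $V(x,\sqrt t)\sim t^{d/2}$ for $t\geq 1$, which holds because $\mu$ is uniformly bounded above and below, and the fact that $\mu(x)\mu(y)\sim 1$.

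For the diagonal bound, the two-sided estimate in (\ref{gaussian}) immediately yields $p(t,x,x)\sim t^{-d/2}$, so $\int_1^\infty p(t,x,x)/t\,dt\sim\int_1^\infty t^{-d/2-1}\,dt=2/d$, a finite positive constant. Multiplying by $\mu(x)^2\sim 1$ gives $W(x,x)\sim 1$.

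For the off-diagonal case, write $R=d(x,y)\geq 1$ and split $\int_1^\infty=\int_1^R+\int_R^\infty$. On $[R,\infty)$, (\ref{222}) combined with the substitution $u=R^2/t$ yields
\[
\int_R^\infty\frac{p(t,x,y)}{t}\,dt\;\leq\;C\int_R^\infty t^{-d/2-1}e^{-C'R^2/t}\,dt\;=\;CR^{-d}\int_0^{R}u^{d/2-1}e^{-C'u}\,du\;\lesssim\;R^{-d}.
\]
The matching lower bound comes by restricting the Gaussian lower bound in (\ref{gaussian}) to $t\geq R^2$: then $R^2/t\leq 1$, so $p(t,x,y)\gtrsim t^{-d/2}$ and $\int_{R^2}^\infty t^{-d/2-1}\,dt\sim R^{-d}$. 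For the inner range $[1,R]$, the bound (\ref{444}) yields $p(t,x,y)\lesssim t^{-d/2}e^{-R}$ on $[1,R/e]$ (since $(et/R)^R\leq e^{-R}$ there), which is negligible. On the narrow window $[R/e,R]$, I invoke a complementary exponential estimate $p(t,x,y)\lesssim t^{-d/2}e^{-cR}$, obtained either by extending (\ref{222}) slightly below $R$ via Davies--Gaffney, or via a Chernoff/large-deviation bound on the Poisson jump count of the continuous-time random walk; this forces $\int_1^R p(t,x,y)/t\,dt\lesssim e^{-cR}\ll R^{-d}$, and the three regimes combine to give $W(x,y)\sim R^{-d}$.

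The main technical obstacle is precisely the intermediate window $t\in[R/e,R]$. The estimate (\ref{444}) degrades there since the prefactor $(et/R)^R$ blows up to $e^R$ at $t=R$, while (\ref{222}) is formally stated only for $t\geq R$. Bridging this gap by producing exponential $R$-decay on this window is the crux of the upper bound; once it is in place, the change-of-variable computations on $[R,\infty)$ and $[R^2,\infty)$ deliver the matching polynomial asymptotics $R^{-d}$ on both sides.
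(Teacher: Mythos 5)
Your diagonal estimate, your lower bound (restricting the Gaussian lower bound of \eqref{gaussian} to $t\ge R^2$, a clean variant of the paper's substitution over all of $[1,\infty)$), and your upper bound on $[R,\infty)$ via \eqref{222} are all correct. The problem is the range $[1,R]$, and it is a genuine gap rather than a presentational one. First, the inequality $(et/R)^R\le e^{-R}$ on $[1,R/e]$ is false: at $t=R/e$ the left-hand side equals $1$, and the bound $e^{-R}$ holds only for $t\le R/e^2$. If you instead integrate \eqref{444} honestly over $[1,R/e]$ you get
\[
\int_1^{R/e}e^{R}\Bigl(\tfrac{t}{R}\Bigr)^{R}t^{-\frac d2-1}\,dt
\;\le\;\Bigl(\tfrac{e}{R}\Bigr)^{R}\frac{(R/e)^{R-\frac d2}}{R-\frac d2}
\;\sim\;R^{-\frac d2-1},
\]
which is weaker than the target $R^{-d}$ as soon as $d\ge3$. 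This is precisely why the paper states that \eqref{444} is too coarse on this interval and replaces it by something sharper.

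Second, the intermediate window is exactly the step you flag as "the crux" and then do not prove: the claimed bound $p(t,x,y)\lesssim t^{-d/2}e^{-cR}$ there is asserted with two possible routes sketched in a phrase, not established. The paper closes this gap by invoking the sharp Davies--Gaffney--Grigor'yan lemma of \cite{bauer2017sharp}, which gives $p(t,x,y)\le\exp(-\zeta_1(t,r))$ with $\zeta_1(t,r)=r\,\mathrm{arsinh}(r/t)-\sqrt{r^2+t^2}+t$, and hence $p(t,x,y)\le e^{r}\bigl(\tfrac{t}{2r}\bigr)^{r}$; the extra factor $2$ in the denominator, absent from \eqref{444}, is what turns the estimate on $[1,r/e]$ into a genuinely exponentially small quantity ($\le 2^{-r}$ after integration), with the remaining range $[r/e,\infty)$ handled by the Gaussian-type upper bound. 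To complete your argument you would need to actually carry out one of your two suggested routes (your route (a) is essentially the cited lemma), or adopt the paper's splitting at $r/e$ together with that lemma.
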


We recall that the spectrum of the normalized graph Laplacian lies in \([0,2]\).  Consequently, by functional calculus, the multiplier \(\log\lambda\) is unbounded in \((0,2]\), and thus \(\log(-\Delta)\)  is unbounded operator on \(\ell^2(V,\mu)\).  This unboundedness is also apparent from the pointwise kernel representation of \(\log(-\Delta)\).  Indeed, the first term induces a bounded gradient‐type operator on \(\ell^2(V,\mu)\), whereas the second, long‐range term fails to be \(\ell^2\)-bounded.  We make this precise in the next two propositions.

\begin{proposition}\label{prop:unbounded22}
	There exists a constant \(C>0\), independent of \(u\), such that
	\[
	\bigl\|\nabla^{\log}u\bigr\|_{\ell^2(\mathbb{Z}^d)}
	\;\le\;
	C\,\|u\|_{\ell^2(\mathbb{Z}^d)},
	\quad\forall\,u\in\ell^2(\mathbb{Z}^d),
	\]
	where
\[
\nabla^{\log}u(x)
=\Bigl(
\sqrt{\tfrac{W_{\log}(x,y_i)}{2\,\mu(x)}}\bigl(u(x)-u(y_i)\bigr)
\Bigr)_{i\ge1}.
\]

\end{proposition}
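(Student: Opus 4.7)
The plan is to reduce the desired estimate to a uniform bound on the row sums $\sum_{y\ne x}W_{\log}(x,y)$ of the logarithmic kernel. First I would expand the Euclidean length of the vector $\nabla^{\log}u(x)$ and sum over $x$ to write
\[
\|\nabla^{\log}u\|_{\ell^2(\mathbb{Z}^d)}^2
=\sum_{x\in\mathbb{Z}^d}\sum_{i\ge1}\frac{W_{\log}(x,y_i)}{2\mu(x)}\bigl(u(x)-u(y_i)\bigr)^2
=\frac12\sum_{\substack{x,y\in\mathbb{Z}^d\\ y\ne x}}\frac{W_{\log}(x,y)}{\mu(x)}\bigl(u(x)-u(y)\bigr)^2.
\]
Applying the elementary inequality $(a-b)^2\le 2(a^2+b^2)$, Fubini, the symmetry $W_{\log}(x,y)=W_{\log}(y,x)$, and the uniform lower bound $\mu(x)\ge \mu_{\min}>0$, the right-hand side is controlled by
\[
\frac{2}{\mu_{\min}}\sum_{x\in\mathbb{Z}^d}u(x)^2\sum_{y\ne x}W_{\log}(x,y).
\]
It therefore suffices to show $M:=\sup_{x\in\mathbb{Z}^d}\sum_{y\ne x}W_{\log}(x,y)<\infty$.

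For this uniform row-sum bound I would invoke the pointwise decay estimate \eqref{wlogg} from Proposition~\ref{prop:Wlog11}. Since $\#\{y\in\mathbb{Z}^d:d(x,y)=k\}\lesssim k^{d-1}$ uniformly in $x$,
\[
\sum_{y\ne x}W_{\log}(x,y)\;\lesssim\;\sum_{k=1}^{\infty}\frac{e^{k}\,k^{d-1}}{k^{k+1}}\;=\;\sum_{k=1}^{\infty}\frac{e^{k}}{k^{k-d+2}},
\]
and the super-exponential factor $k^{-k}$ makes the series converge to a constant independent of $x$. Substituting back gives $\|\nabla^{\log}u\|_{\ell^2(\mathbb{Z}^d)}\lesssim\|u\|_{\ell^2(\mathbb{Z}^d)}$. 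As an alternative route, one can work instead from the identity \eqref{wlog}, combined with the linear short-time estimate $1-p(t,x,x)\mu(x)\lesssim t$ (which follows by writing $1-p(t,x,x)\mu(x)=-\mu(x)\int_0^t\bigl(\Delta e^{s\Delta}(\delta_x/\mu(x))\bigr)(x)\,ds$ and invoking the uniform $\ell^\infty$-boundedness of the normalized Laplacian together with the sub-Markov property of $e^{s\Delta}$), to conclude that $\int_0^1(1-p(t,x,x)\mu(x))\,dt/t$ is bounded uniformly in $x$.

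The crux of the argument is precisely the uniformity in $x$ of the row-sum estimate; once this is secured by either route, the remainder is the elementary splitting of $(u(x)-u(y))^2$ and the uniform lower bound on $\mu$. The structural fact that makes the proof work is the rapid (faster than any power) off-diagonal decay of $W_{\log}$, which encodes the short-time behavior of the heat kernel on $\mathbb{Z}^d$ and is what separates this bounded, short-range "gradient" part of $\log(-\Delta)$ from the genuinely long-range term governed by $W(x,y)$ that causes unboundedness in the companion proposition.
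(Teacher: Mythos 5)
Your proposal is correct and follows essentially the same route as the paper: expand the square, use $(a-b)^2\le 2(a^2+b^2)$ with the symmetry of $W_{\log}$ to reduce to a uniform bound on the row sums $\sum_{y\ne x}W_{\log}(x,y)/\mu(x)$, and then conclude via the decay estimate \eqref{wlogg} together with the shell count $\#\{y:d(x,y)=k\}\lesssim k^{d-1}$. The alternative route you sketch via the identity \eqref{wlog} and the short-time bound $1-p(t,x,x)\mu(x)\lesssim t$ is also sound, but the main argument coincides with the paper's.
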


\begin{proposition}\label{prop:unbounded11}
	There exists a sequence \(\{u_n\}_{n\ge1}\subset\ell^2(\mathbb{Z}^d)\) with $\|u_n\|_{\ell^2(\mathbb{Z}^d)}=1$
	such that
	\[
	\sum_{x,y\in\mathbb{Z}^d}u_n(x)\,u_n(y)\,W(x,y)
	\;\longrightarrow\;+\infty
	\quad\text{as }n\to\infty,
	\]
	where
	\[
	W(x,y)
	=\mu(x)\,\mu(y)\int_1^\infty p(t,x,y)\,\frac{dt}{t}
	\]
	is the long‐range kernel appearing in the logarithmic Laplacian.
\end{proposition}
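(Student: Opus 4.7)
The plan is to construct an explicit unit-norm sequence using normalized indicator functions of metric balls. Let $B_n := \{z \in \mathbb{Z}^d : d(0,z) \le n\}$ and set
\[
u_n := \frac{\mathbf{1}_{B_n}}{\|\mathbf{1}_{B_n}\|_{\ell^2(\mathbb{Z}^d)}}.
\]
Since $0 < \mu_{\min} \le \mu(x) \le \mu_{\max} < \infty$ uniformly in $x$, and the number of lattice points in $B_n$ is of order $n^d$, we have $\|\mathbf{1}_{B_n}\|_{\ell^2(\mathbb{Z}^d)}^2 \asymp n^d$, so $\|u_n\|_{\ell^2(\mathbb{Z}^d)} = 1$. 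Because $W \ge 0$ and $u_n \ge 0$, the bilinear form reduces to
\[
\sum_{x,y \in \mathbb{Z}^d} u_n(x)\,u_n(y)\,W(x,y) \;\asymp\; \frac{1}{n^d}\sum_{x,y \in B_n} W(x,y).
\]
By the sharp lower bound $W(x,y) \gtrsim d(x,y)^{-d}$ for $x\ne y$ from Proposition \ref{prop:Wxy11}, it suffices to establish the combinatorial estimate $\sum_{x,y \in B_n,\, x \ne y} d(x,y)^{-d} \gtrsim n^d \log n$.

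To prove this, I would restrict the outer summation to the inner half-ball $B_{\lfloor n/2 \rfloor}$. For each such $x$, the sphere $S_k(x) := \{y \in \mathbb{Z}^d : d(x,y) = k\}$ is entirely contained in $B_n$ whenever $k \le \lfloor n/2 \rfloor$, and a standard lattice count yields $|S_k(x)| \asymp k^{d-1}$. Consequently,
\[
\sum_{\substack{y \in B_n \\ y \ne x}} d(x,y)^{-d} \;\ge\; \sum_{k=1}^{\lfloor n/2\rfloor}|S_k(x)|\,k^{-d} \;\gtrsim\; \sum_{k=1}^{\lfloor n/2\rfloor}\frac{1}{k} \;\gtrsim\; \log n.
\]
Summing over $x \in B_{\lfloor n/2 \rfloor}$, whose cardinality is $\asymp n^d$, delivers the required bound. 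Feeding this back into the normalization yields $\sum_{x,y} u_n(x) u_n(y) W(x,y) \gtrsim \log n \to \infty$, as claimed.

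The main obstacle is really just the bookkeeping associated with boundary effects: for $x$ close to the boundary of $B_n$ many of the spheres $S_k(x)$ would escape $B_n$, which would spoil a naive direct computation; the half-ball restriction circumvents this cleanly. Structurally, the heart of the argument is the borderline decay rate $W(x,y) \asymp d(x,y)^{-d}$, which produces a logarithmically divergent quadratic form, mirroring the well-known unboundedness of $\log(-\Delta)$ on $\mathbb{R}^d$ arising from its borderline long-range kernel.
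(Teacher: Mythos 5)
Your proposal is correct and follows essentially the same route as the paper: the same normalized ball indicators $u_n$, the same lower bound $W(x,y)\gtrsim d(x,y)^{-d}$ from Proposition \ref{prop:Wxy11}, and the same shell-counting argument producing the harmonic sum $\sum_{k=1}^{n}k^{-1}$. Your restriction of the outer sum to the half-ball $B_{\lfloor n/2\rfloor}$ is a minor but welcome refinement that makes the containment of the spheres $S_k(x)$ in $B_n$ explicit, a point the paper's own write-up glosses over.
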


Below, we exploit the pointwise integral representations of the fractional and logarithmic Laplacians to derive two convergence results.  In many cases, these pointwise arguments yield stronger convergence than those obtained via functional calculus, while imposing significantly weaker regularity assumptions on the underlying functions.

\begin{proposition}\label{slx22211}
	For every \(u \in \ell^\infty(\mathbb{Z}^d)\),
	\[
	\lim_{s\to 1^-} \bigl\| (-\Delta)^s u + \Delta u \bigr\|_{\ell^\infty(\mathbb{Z}^d)} = 0.
	\]
\end{proposition}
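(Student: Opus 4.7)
The plan is to work directly from the Bochner representation. On $\mathbb{Z}^d$ with $\mu_{\min}>0$ and bounded edge weights and vertex degrees, the normalized Laplacian $-\Delta$ is a bounded operator on $\ell^\infty(\mathbb{Z}^d)$; call its operator norm $C$. Using the identity $u - e^{t\Delta}u = \int_0^t e^{r\Delta}(-\Delta u)\,dr$ together with the $\ell^\infty$-contractivity of $e^{r\Delta}$, one immediately gets $\|u - e^{t\Delta}u\|_{\ell^\infty}\le \min(Ct,\,2)\,\|u\|_{\ell^\infty}$, so the Bochner integral defining $(-\Delta)^s u$ converges absolutely in $\ell^\infty$ for every $u\in\ell^\infty(\mathbb{Z}^d)$. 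My strategy is to split this integral at $t=1$ and handle the two ranges separately.

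For the tail $t\ge 1$, the crude bound $\|u-e^{t\Delta}u\|_{\ell^\infty}\le 2\|u\|_{\ell^\infty}$ gives a contribution dominated by $2\|u\|_{\ell^\infty}/\Gamma(1-s)$, which vanishes uniformly in $x$ as $s\to 1^-$ because $\Gamma(1-s)\to+\infty$ at the pole. For the near part $0<t\le 1$, I would use the second-order Taylor-type expansion
\[
u(x) - e^{t\Delta}u(x) \;=\; t\,(-\Delta u)(x) \;+\; R(t,x),\qquad R(t,x) := \int_0^t(e^{r\Delta}-I)(-\Delta u)(x)\,dr,
\]
which satisfies $|R(t,x)| \le (t^2/2)\,\|\Delta^2 u\|_{\ell^\infty}\le (C^2/2)\,t^2\,\|u\|_{\ell^\infty}$ by two applications of the boundedness of $\Delta$ on $\ell^\infty$. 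Inserting the expansion, the linear term produces
\[
\frac{s}{\Gamma(1-s)}\int_0^1 t^{-s}\,dt\cdot(-\Delta u)(x) \;=\; \frac{s}{\Gamma(2-s)}\,(-\Delta u)(x),
\]
which converges to $-\Delta u(x)$ uniformly in $x$ via the cancellation $s/[\Gamma(1-s)(1-s)] = s/\Gamma(2-s)\to 1$, while the remainder piece is bounded by $sC^2\|u\|_{\ell^\infty}/[2\,\Gamma(1-s)(2-s)]\to 0$ uniformly in $x$.

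Adding the two pieces yields $\|(-\Delta)^s u + \Delta u\|_{\ell^\infty}\to 0$ as $s\to 1^-$. The only nontrivial obstacle is arranging the cancellation between the vanishing prefactor $s/\Gamma(1-s)$ and the divergent integral $\int_0^1 t^{-s}\,dt = 1/(1-s)$; once this is recognized via the recursion $\Gamma(2-s)=(1-s)\Gamma(1-s)$, all of the estimates become uniform simply because $\Delta$ and $\Delta^2$ are bounded on $\ell^\infty(\mathbb{Z}^d)$, and no regularity, summability, or decay assumption on $u$ beyond boundedness is needed.
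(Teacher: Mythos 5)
Your argument is correct, but it takes a genuinely different route from the paper's. The paper works at the level of the pointwise kernel: it writes $(-\Delta)^s u(x)+\Delta u(x)=\frac{1}{\mu(x)}\sum_{y\ne x}(W_s(x,y)-w_{xy})(u(x)-u(y))$, splits the time integral in $W_s$ at a small $\delta$, Taylor-expands $p(t,x,y)$ at $t=0$ to identify the first-order coefficient with $w_{xy}$ (Lemma \ref{wxy1}), and controls the second-order remainder and the summation over $y$ via the factorial decay of heat-kernel time derivatives (Lemmas \ref{higher‐derivative-nearest}, \ref{rhyjx}, \ref{lem:uniform‐bound‐factorial} and Corollary \ref{yzslj}). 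You instead argue entirely at the semigroup level in the Banach space $\ell^\infty(\mathbb{Z}^d)$: the only inputs are that $\Delta$ is bounded on $\ell^\infty$ (which holds for the normalized Laplacian since $\sum_y w_{xy}=\mu(x)$, giving $\|\Delta\|_{\ell^\infty\to\ell^\infty}\le 2$), that $e^{t\Delta}$ is an $\ell^\infty$-contraction (positivity of the kernel plus stochastic completeness), and the elementary cancellation $s/[\Gamma(1-s)(1-s)]=s/\Gamma(2-s)\to1$. Your proof is shorter, avoids all heat-kernel derivative estimates, and would generalize verbatim to any stochastically complete graph whose Laplacian is $\ell^\infty$-bounded, whereas the paper's computation yields as a by-product the per-site statement $\sum_{y}|W_s(x,y)-w_{xy}|\to0$, which is of independent interest. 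Two points you should make explicit to be fully rigorous: (i) for $u\in\ell^\infty\setminus\ell^2$ the operator $(-\Delta)^su$ is defined in the paper by the pointwise kernel formula \eqref{eq:kernel-form}, so you should note that this agrees with the $\ell^\infty$-valued Bochner integral you start from (this is a Tonelli exchange using stochastic completeness, exactly as in the paper's own derivation); and (ii) the identity $u-e^{t\Delta}u=\int_0^t e^{r\Delta}(-\Delta u)\,dr$ on $\ell^\infty$ requires identifying the heat semigroup with the norm-convergent series $\sum_n (t\Delta)^n/n!$ of the bounded operator $\Delta$ on $\ell^\infty$, which is standard here but deserves a sentence.
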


\begin{proposition}\label{slx11111}
	For every \(u\in C_c(\mathbb{Z}^d)\), one has pointwise convergence
	\[
	\lim_{s\to0^+}(-\Delta)^s u(x) = u(x),
	\quad x\in \mathbb{Z}^d.
	\]
\end{proposition}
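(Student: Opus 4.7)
The natural starting point is the Bochner integral representation
\[
(-\Delta)^s u(x) = \frac{s}{\Gamma(1-s)}\int_0^\infty \bigl(u(x)-e^{t\Delta}u(x)\bigr)\,t^{-1-s}\,dt,
\]
which converges pointwise for $u\in C_c(\mathbb{Z}^d)$. The plan is to split the integral at $t=1$ and treat the two pieces by different methods, following the same philosophy used in Theorem \ref{pointlog11}.

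For the short-time piece, I would combine the identity $e^{t\Delta}u(x)-u(x)=\int_0^t \Delta e^{\tau\Delta}u(x)\,d\tau$ with the $\ell^\infty$-contractivity $\|e^{\tau\Delta}\Delta u\|_\infty \le \|\Delta u\|_\infty$ (since $\Delta u\in C_c(\mathbb{Z}^d)$ and stochastic completeness gives a Markov semigroup) to obtain the pointwise bound $|u(x)-e^{t\Delta}u(x)|\le t\,\|\Delta u\|_\infty$. Hence
\[
\frac{s}{\Gamma(1-s)}\int_0^1 \bigl|u(x)-e^{t\Delta}u(x)\bigr|\,t^{-1-s}\,dt \;\le\; \frac{s\,\|\Delta u\|_\infty}{(1-s)\,\Gamma(1-s)}\;\longrightarrow\; 0.
\]

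For the long-time piece, I would split the integrand and use $\int_1^\infty t^{-1-s}\,dt=1/s$ explicitly to write
\[
\frac{s}{\Gamma(1-s)}\!\int_1^\infty\!\! \bigl(u(x)-e^{t\Delta}u(x)\bigr) t^{-1-s} dt = \frac{u(x)}{\Gamma(1-s)}-\frac{s}{\Gamma(1-s)}\!\int_1^\infty\! e^{t\Delta}u(x)\,t^{-1-s} dt.
\]
The first term converges to $u(x)$. For the remainder, the Gaussian upper bound in \eqref{gaussian} together with $V(x,\sqrt t)\sim t^{d/2}$ on weighted lattice graphs with $0<\mu_{\min}\le\mu_{\max}<\infty$ yields
\[
|e^{t\Delta}u(x)|\le \sum_{y\in\mathbb{Z}^d} p(t,x,y)\,|u(y)|\,\mu(y)\le \frac{C}{t^{d/2}}\sum_{y}|u(y)|\mu(y)\qquad (t\ge 1),
\]
so $\int_1^\infty |e^{t\Delta}u(x)|\,t^{-1-s}\,dt \le C'/(s+d/2)$ is uniformly bounded as $s\to 0^+$, and multiplication by $s/\Gamma(1-s)\to 0$ kills the remainder. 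Combining both pieces gives $(-\Delta)^s u(x)\to u(x)$.

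The main obstacle is the long-time analysis: the naive estimate $|e^{t\Delta}u(x)|\le\|u\|_\infty$ would only produce an integral of size $1/s$, which after multiplication by $s/\Gamma(1-s)$ does \emph{not} vanish and would leave a spurious contribution. One genuinely needs the polynomial heat-kernel decay $t^{-d/2}$, which is afforded by the lattice structure and the uniform positivity and boundedness of the vertex measure; this is the step where the $\mathbb{Z}^d$ hypothesis is used in an essential way.
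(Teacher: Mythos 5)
Your proof is correct, and it takes a cleaner route than the paper's. The paper works from the pointwise kernel representation $(-\Delta)^s u(x)=\frac{1}{\mu(x)}\sum_{y\ne x}W_s(x,y)(u(x)-u(y))$, splits $W_s=W_s^{(1)}+W_s^{(2)}$ at $t=1$, and then controls $\sum_{y\ne x}W_s^{(1)}(x,y)$ via the on-diagonal Taylor expansion $p(t,x,x)=\mu(x)^{-1}+t\,p'(\tau,x,x)$ together with the heat-kernel derivative bounds of Lemma \ref{higher‐derivative-nearest} and Lemma \ref{stupbound}, while the long-range piece $W_s^{(2)}$ is handled with the Gaussian bound \eqref{gaussian} and dominated convergence. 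You instead stay at the level of the Bochner integral and replace the kernel-level Taylor expansion by the Duhamel identity $e^{t\Delta}u-u=\int_0^t e^{\tau\Delta}\Delta u\,d\tau$ plus Markov contractivity, which yields $|u(x)-e^{t\Delta}u(x)|\le t\|\Delta u\|_\infty$ and kills the short-time piece without any heat-kernel derivative estimates; this part of your argument is more elementary and would work on any stochastically complete locally finite graph. For the long-time piece the two arguments coincide in substance: both need the polynomial on-diagonal decay $p(t,x,y)\lesssim t^{-d/2}$ from \eqref{gaussian} (your remark that the trivial bound $|e^{t\Delta}u|\le\|u\|_\infty$ leaves a non-vanishing contribution of size $\|u\|_\infty/\Gamma(1-s)$ correctly identifies where the lattice hypothesis enters). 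The only point worth making explicit is that the $\ell^2$-valued Bochner integral may be evaluated pointwise because $\mu_{\min}>0$ makes $\ell^2$-convergence imply pointwise convergence and your estimates show the scalar integral converges absolutely; the paper uses the same identification implicitly.
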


We also show how the pointwise representation leads to an alternative proof of Theorem~\ref{pointlog11} on $\mathbb{Z}^d;$ see Theorem \ref{pointwise}.

We further establish that the derivative‐at‐zero limit defining the fractional Laplacian actually converges in every $\ell^p$ space for $1<p\le\infty$, and moreover, the logarithmic Laplacian of any compactly supported function belongs to $\ell^p$.

\begin{theorem}\label{thm:lplimit11}
	Let \(u\in C_c(\mathbb{Z}^d)\).  Then, for every \(1<p\le\infty\), one has
	\[
\log\left(-\Delta\right)u
	\;\in\;\ell^p(\mathbb{Z}^d),\]
	and the strong convergence in $\ell^p\left(\mathbb{Z}^d\right)$
	\[\frac{(-\Delta)^s u - u}{s}\longrightarrow
\log\left(-\Delta\right)u
	\quad\text{as }s\to0^+.
	\]
\end{theorem}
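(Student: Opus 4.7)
The plan is to manipulate the difference quotient via the Bochner integral representation of $(-\Delta)^{s}$, match its pointwise limit to the kernel formula for $\log(-\Delta)u$ from Theorem~\ref{pointlog11}, verify $\ell^{p}$-membership using the kernel bounds of Propositions~\ref{prop:Wlog11}--\ref{prop:Wxy11}, and upgrade pointwise convergence to strong $\ell^{p}$-convergence by dominated convergence. Starting from $(-\Delta)^{s}u = \frac{s}{\Gamma(1-s)}\int_{0}^{\infty}(u - e^{t\Delta}u)\,t^{-1-s}\,dt$, I would split at $t=1$ and use $\int_{1}^{\infty} t^{-1-s}\,dt = 1/s$ to obtain
\[
\frac{(-\Delta)^{s}u - u}{s} = \frac{1}{\Gamma(1-s)}\int_{0}^{1}\frac{u - e^{t\Delta}u}{t^{1+s}}\,dt - \frac{1}{\Gamma(1-s)}\int_{1}^{\infty}\frac{e^{t\Delta}u}{t^{1+s}}\,dt + \frac{1/\Gamma(1-s) - 1}{s}\,u.
\]
Using the classical identity $\Gamma'(1) = \int_{0}^{1}\frac{e^{-t}-1}{t}\,dt + \int_{1}^{\infty}\frac{e^{-t}}{t}\,dt$, the Bochner formula of Theorem~\ref{bochlog} rewrites as $\log(-\Delta)u = \int_{0}^{1}(u-e^{t\Delta}u)\,t^{-1}\,dt - \int_{1}^{\infty} e^{t\Delta}u\cdot t^{-1}\,dt + \Gamma'(1)\,u$, and pointwise convergence of each term is then immediate from $1/\Gamma(1-s) \to 1$, $t^{-s}\to 1$, and the expansion $1/\Gamma(1-s) = 1 + \Gamma'(1)\,s + O(s^{2})$.

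For $\ell^{p}$-membership, let $K := \mathrm{supp}(u)$; on $K$ the values are bounded and contribute only finitely many terms. For $x \notin K$, the pointwise formula of Theorem~\ref{pointlog11} reduces to $\log(-\Delta)u(x) = -\mu(x)^{-1}\sum_{y\in K}\bigl(W_{\log}(x,y) + W(x,y)\bigr)u(y)$. By Proposition~\ref{prop:Wlog11} and Stirling, $W_{\log}(x,y) \lesssim e^{d(x,y)}/d(x,y)^{d(x,y)+1}$ decays faster than any polynomial, so this contribution lies in every $\ell^{p}$. By Proposition~\ref{prop:Wxy11}, $W(x,y) \lesssim d(x,y)^{-d}$, yielding the $d(x,K)^{-d}$-tail; since there are $\asymp n^{d-1}$ lattice points with $d(x,K) = n$, the sum $\sum_{x} d(x,K)^{-dp}$ converges precisely when $p > 1$, establishing the sharp membership.

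For the $\ell^{p}$-convergence, I would apply dominated convergence term-by-term. The $u$-term is trivial since $u \in \ell^{p}$ and the scalar coefficient is bounded. For the short-time integral, bound $|u - e^{t\Delta}u|(x) \le C t$ on $K$, and for $x \notin K$, $t \in (0,1)$, invoke (\ref{444}) with $V(x,\sqrt{t}) \asymp 1$ to get $|e^{t\Delta}u|(x) \lesssim \bigl(et/d(x,K)\bigr)^{d(x,K)}$; integrating against $t^{-1-s}$ produces a uniform dominator with the factorial decay of (\ref{wlogg}), in $\ell^{p}$ for every $p \ge 1$. For the long-time integral, $|t^{-1-s}/\Gamma(1-s) - t^{-1}| \lesssim 1/t$ uniformly on $[1,\infty)$ for $s \in (0, s_{0}]$, so the dominator reduces to $\int_{1}^{\infty} |e^{t\Delta}u|(x)\,t^{-1}\,dt$, which by the Gaussian estimates (\ref{gaussian})--(\ref{222}) is of order $d(x,K)^{-d}$ and lies in $\ell^{p}$ for $p > 1$ exactly as in the membership step. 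The case $p = \infty$ follows from pointwise convergence together with this uniform tail decay. The main obstacle is precisely this uniform-in-$s$ control of the long-time piece, whose marginal $d(x,K)^{-d}$ tail is what forces the sharp restriction $p > 1$.
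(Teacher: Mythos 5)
Your proposal is correct and follows essentially the same route as the paper: both split the Bochner integral at $t=1$, identify the pointwise limit with the kernel representation of $\log(-\Delta)u$, obtain $\ell^p$-membership from the bounds $W_{\log}(x,y)\lesssim e^{d(x,y)}d(x,y)^{-d(x,y)-1}$ and $W(x,y)\lesssim d(x,y)^{-d}$ via shell counting, and close the argument by exhibiting a single $s$-uniform dominator of the form $C|u(x)|+C\sum_{y\neq x}|u(y)|\,d(x,y)^{-d}\in\ell^p$ for $1<p\le\infty$. The only cosmetic difference is the treatment of $p=\infty$ (the paper deduces it from the finite-$p$ case via $\|\cdot\|_{\ell^\infty}\lesssim\|\cdot\|_{\ell^p}$, while you argue directly from the uniform tail decay), which is immaterial.
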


Finally, we restrict to the standard lattice graph \(\mathbb{Z}^d\) (with unit weights and counting measure) and employ the discrete Fourier transform
\[
\widehat u(\xi)=\frac1{(2\pi)^{d/2}}\sum_{x\in\mathbb{Z}^d}u(x)e^{-i x\cdot\xi}, 
\quad \xi\in[-\pi,\pi]^d.
\]
In this framework, one shows that
\[
\widehat{(-\Delta)^su}(\xi)=\Phi(\xi)^s\widehat u(\xi),
\qquad
\widehat{\log(-\Delta)\,u}(\xi)=\ln\bigl(\Phi(\xi)\bigr)\,\widehat u(\xi),
\]
where \(\Phi(\xi):=\sum_{j=1}^d(2-2\cos\xi_j)\) is the graph symbol.  Consequently the fractional diffusion kernel admits the Fourier integral representation:
\[
p_s(t,x,y)
=\bigl(e^{-t(-\Delta)^s}\delta_y\bigr)(x)
=\frac1{(2\pi)^d}
\int_{[-\pi,\pi]^d}e^{-t\,\Phi(\xi)^s}\,e^{i(x-y)\cdot\xi}\,d\xi.
\]
It follows immediately that \(p_s(t,x,y)\to\delta_{x,y}\) as \(t\to0\).  We then analyze the exact large-time behavior and off-diagonal asymptotics of fractional diffusion kernel.

On \(\mathbb{R}^d\), the classical fractional heat kernel satisfies
\[
\mathcal P_s(t,x)
=\frac{1}{(2\pi)^d}\int_{\mathbb{R}^d}e^{-t|\xi|^{2s}}e^{i x\cdot\xi}\,d\xi
=t^{-\frac{d}{2s}}
\frac{1}{(2\pi)^d}
\int_{\mathbb{R}^d}e^{-|\xi|^{2s}}\,e^{i\,x\cdot \xi\,t^{-\frac{1}{2s}}}\,d\xi.
\]
Therefore, one has the sharp time decay and asymptotic constants 
\[
\sup_{x\in \mathbb{R}^d}\mathcal P_s(t,x)\;\lesssim\;t^{-\frac{d}{2s}},
\quad
\lim_{t\to\infty}t^{\frac{d}{2s}}\,
\mathcal P_s(t,x)
=\frac{1}{(2\pi)^d}\int_{\mathbb{R}^d}e^{-|\xi|^{2s}}\,d\xi
=\frac{\pi^{-d/2}}{s\,2^d\,\Gamma(\tfrac d2)}\,
\Gamma\!\Bigl(\tfrac{d}{2s}\Bigr),
\]
and in \cite[Theorem 2.1]{blumenthal1960some}, 
\[\mathcal P_s(t,x) \sim |x|^{-d-2s}\quad\text{as}\quad |x|\rightarrow\infty.\]

Next, we prove that exactly the same temporal decay rate and spatial off‐diagonal behavior also hold for the fractional diffusion kernel \(p_s(t,x,y)\) on \(\mathbb{Z}^d\). In fact, the large–time asymptotic analysis of $p_s(t,x,y)$ on $\mathbb{Z}^d$ is essentially the same as on $\mathbb{R}^d$; both the decay rate and the limiting constant $C_{s,d}$ follow by identical methods.
By contrast, the spatial–frequency decay analysis is fundamentally different, see Proposition \ref{prop:large_time11} and \ref{prop:tail_asymptotic11}.

\begin{proposition}\label{prop:large_time11}
	For each fixed \(0<s<1, d\ge 1\), the fractional diffusion kernel satisfies
	\[
	\sup_{x,y\in\mathbb{Z}^d}p_s(t,x,y)\;\lesssim\;t^{-d/(2s)},\quad t>0,
	\]
	and moreover,
	\[
	\lim_{t\to\infty}t^{\frac{d}{2s}}\,p_s(t,x,y)
	= C_{s,d},
	\]
	where 
	$$
	C_{s,d}
	:=\frac{\pi^{-d/2}}{s\,2^d\,\Gamma(\tfrac d2)}\,
	\Gamma\!\bigl(\tfrac{d}{2s}\bigr).
	$$
\end{proposition}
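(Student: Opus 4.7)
The plan is to work directly with the Fourier representation
\[
p_s(t,x,y)=\frac{1}{(2\pi)^d}\int_{[-\pi,\pi]^d}e^{-t\Phi(\xi)^s}\,e^{i(x-y)\cdot\xi}\,d\xi,
\]
and exploit two elementary facts about the symbol: the uniform lower bound $\Phi(\xi)\ge c|\xi|^2$ on $[-\pi,\pi]^d$ (from $1-\cos\theta\ge 2\theta^2/\pi^2$), and the Taylor expansion $\Phi(\xi)=|\xi|^2+O(|\xi|^4)$ as $\xi\to 0$. The first fact will deliver the global $t^{-d/(2s)}$ bound and a dominating function; the second will give the correct pointwise limit for the integrand after rescaling.

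For the uniform estimate, I would bound $|e^{i(x-y)\cdot\xi}|\le 1$ and apply the lower bound on $\Phi$ to get
\[
\sup_{x,y}p_s(t,x,y)\le\frac{1}{(2\pi)^d}\int_{[-\pi,\pi]^d}e^{-ct|\xi|^{2s}}d\xi
\le\frac{1}{(2\pi)^d}\int_{\mathbb{R}^d}e^{-ct|\xi|^{2s}}d\xi,
\]
and then perform the scaling $\xi=t^{-1/(2s)}\eta$, which extracts exactly the factor $t^{-d/(2s)}$ and leaves a finite integral. For the sharp constant, I apply the same scaling directly to the full Fourier integral to obtain
\[
t^{d/(2s)}p_s(t,x,y)=\frac{1}{(2\pi)^d}\int_{t^{1/(2s)}[-\pi,\pi]^d}e^{-t\Phi(t^{-1/(2s)}\eta)^s}\,e^{it^{-1/(2s)}(x-y)\cdot\eta}\,d\eta.
\]
As $t\to\infty$, the domain expands to $\mathbb{R}^d$, the Taylor expansion yields $t\,\Phi(t^{-1/(2s)}\eta)^s\to|\eta|^{2s}$ pointwise, and the phase tends to $1$. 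I would invoke dominated convergence with majorant $e^{-c|\eta|^{2s}}$ (valid on the expanding domain thanks to the quadratic lower bound applied at $\xi=t^{-1/(2s)}\eta\in[-\pi,\pi]^d$) to conclude
\[
\lim_{t\to\infty}t^{d/(2s)}p_s(t,x,y)=\frac{1}{(2\pi)^d}\int_{\mathbb{R}^d}e^{-|\eta|^{2s}}d\eta.
\]
Finally, a polar-coordinate computation with the substitution $u=r^{2s}$ and the formula for the surface area of $S^{d-1}$ gives
\[
\frac{1}{(2\pi)^d}\cdot\frac{2\pi^{d/2}}{\Gamma(d/2)}\cdot\frac{\Gamma(d/(2s))}{2s}=\frac{\pi^{-d/2}}{s\,2^d\,\Gamma(d/2)}\Gamma\!\bigl(\tfrac{d}{2s}\bigr)=C_{s,d}.
\]

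The main obstacle is producing a dominating function that is uniform in $t$ on the expanding domain $t^{1/(2s)}[-\pi,\pi]^d$. This is precisely where the quadratic lower bound $\Phi(\xi)\ge c|\xi|^2$ on the full torus is essential: it propagates through the scaling as $t\Phi(t^{-1/(2s)}\eta)^s\ge c|\eta|^{2s}$, yielding an envelope independent of $t$ that decays fast enough to be integrable on all of $\mathbb{R}^d$. Once this envelope is in hand, the rest is routine dominated convergence and a standard gamma-function calculation.
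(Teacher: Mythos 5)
Your proposal is correct and follows essentially the same route as the paper: the Fourier representation, the rescaling $\eta=t^{1/(2s)}\xi$, dominated convergence, and the polar/gamma computation of $\int_{\mathbb{R}^d}e^{-|\eta|^{2s}}\,d\eta$ are all identical in substance. The only (valid) streamlining is that you replace the paper's splitting of $[-\pi,\pi]^d$ into a small ball and its complement by the global bound $\Phi(\xi)\gtrsim|\xi|^2$ on the whole torus, which supplies a single $t$-independent majorant on the expanding domain.
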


\begin{proposition}\label{prop:tail_asymptotic11}
	Fix \(0<s<1\) and \(t>0,\:d\ge 1\).  As \(\lvert x-y\rvert\to\infty\), the discrete fractional diffusion kernel on \(\mathbb{Z}^d\) satisfies
	\[
	\lim\limits_{\lvert x-y\rvert\rightarrow \infty}\lvert x-y\rvert^{\,d+2s}p_s(t,x,y)=-\frac{t}{\left(2\pi\right)^d}A_{s,d},
	\]
	where $A_{s,d}$ is given by Lemma \ref{jieduan11}.
\end{proposition}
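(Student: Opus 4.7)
The plan is to extract the leading singular contribution in the Fourier integral representation
\[
p_s(t,x,y)=\frac{1}{(2\pi)^d}\int_{[-\pi,\pi]^d}e^{-t\Phi(\xi)^s}e^{i(x-y)\cdot\xi}\,d\xi.
\]
For $x\ne y$, $\int_{[-\pi,\pi]^d}e^{i(x-y)\cdot\xi}\,d\xi=0$, so I may freely subtract the constant $1$ from the exponential and rewrite
\[
p_s(t,x,y)=\frac{1}{(2\pi)^d}\int_{[-\pi,\pi]^d}\bigl(e^{-t\Phi(\xi)^s}-1\bigr)e^{i(x-y)\cdot\xi}\,d\xi.
\]
The only obstruction to smoothness on the torus sits at $\xi=0$, where $\Phi(\xi)=|\xi|^2+O(|\xi|^4)$, so the large $|x-y|$ asymptotic is governed entirely by the singular behavior of $e^{-t\Phi(\xi)^s}-1$ near the origin.

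The next step is to peel off the principal singular term by writing
\[
e^{-t\Phi(\xi)^s}-1=-t\,\Phi(\xi)^s+R(\xi),\qquad R(\xi):=e^{-t\Phi(\xi)^s}-1+t\,\Phi(\xi)^s.
\]
Taylor expanding $e^{-a}$ at $a=0$ gives $R(\xi)=\tfrac{t^2}{2}\Phi(\xi)^{2s}+O(\Phi(\xi)^{3s})$ near the origin, so $R(\xi)=O(|\xi|^{4s})$ as $\xi\to 0$, while $R$ remains $C^{\infty}$ away from the origin and, by periodicity of $\Phi$, extends smoothly across the boundary of the torus. By Lemma~\ref{jieduan11}, the principal term produces
\[
\lim_{|x-y|\to\infty}|x-y|^{d+2s}\int_{[-\pi,\pi]^d}\Phi(\xi)^s\,e^{i(x-y)\cdot\xi}\,d\xi = A_{s,d},
\]
so the proposition reduces to showing that the remainder is subdominant:
\[
|x-y|^{d+2s}\int_{[-\pi,\pi]^d}R(\xi)\,e^{i(x-y)\cdot\xi}\,d\xi\longrightarrow 0\qquad\text{as }|x-y|\to\infty.
\]

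To control $R$ I would use a smooth cutoff $\chi$ equal to $1$ in a neighborhood of $0$ and supported in a small ball. Then $(1-\chi)R\in C^{\infty}$ on the torus, so its Fourier coefficients are rapidly decreasing and contribute $o(|x-y|^{-N})$ for every $N$. The singular piece $\chi R$ has leading behavior $\tfrac{t^2}{2}|\xi|^{4s}$ near the origin, so applying the same rescaling $\xi=u/|x-y|$ and dominated–convergence argument used to prove Lemma~\ref{jieduan11}, but now with $\Phi^{s}$ replaced by $\Phi^{2s}$, yields a decay of order $|x-y|^{-d-4s}$, which is $o(|x-y|^{-d-2s})$ since $4s>2s$. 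Combining the two contributions gives the announced limit $-\tfrac{t}{(2\pi)^d}A_{s,d}$.

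The main obstacle is the quantitative estimate for the remainder: one must show that the gap in homogeneity between the principal $|\xi|^{2s}$ singularity and the remainder $|\xi|^{4s}$ translates cleanly into a faster polynomial decay of the Fourier coefficient. Handling this uniformly in the direction $\hat z=(x-y)/|x-y|$, without losing the constant, is the delicate point; the cleanest route is to reuse exactly the scaling and dominated‐convergence structure already established in Lemma~\ref{jieduan11}, applied now to the higher‐order singular profile, so that the sharp constant $A_{s,d}$ comes only from the $-t\,\Phi(\xi)^s$ term.
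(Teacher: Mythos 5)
Your overall strategy is the paper's: subtract the constant $1$ (using $\int_{[-\pi,\pi]^d}e^{ik\cdot\xi}\,d\xi=0$ for $k\neq 0$), localize near $\xi=0$ with a radial cutoff, dispose of the smooth piece by repeated integration by parts, and rescale $\eta=|k|\,\xi$ so that Lemma \ref{jieduan11} produces the constant $A_{s,d}$. The one step you have not actually justified is the central one: you assert that Lemma \ref{jieduan11} directly gives
\[
\lim_{|k|\to\infty}|k|^{d+2s}\int_{[-\pi,\pi]^d}\Phi(\xi)^s e^{ik\cdot\xi}\,d\xi=A_{s,d},
\]
but that lemma concerns the exact power $|\eta|^{2s}$ on $\mathbb{R}^d$, not the torus symbol $\Phi(\xi)^s$. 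Passing from one to the other requires the further expansion $\Phi(\xi)^s=|\xi|^{2s}+O(|\xi|^{2s+2})$ near the origin together with a uniform-in-$|k|$ bound on the corresponding rescaled error integral --- exactly the same kind of estimate you invoke for $R$. The paper avoids the issue by Taylor-expanding $e^{-t\Phi(\xi)^s}-1=-t|\xi|^{2s}+O(|\xi|^{3s})$ in a single step, so that after rescaling the principal term is literally the truncated integral of Lemma \ref{jieduan11} and everything else is one error term of strictly higher order. Your two-stage decomposition can be made to work, but you must add the $\Phi(\xi)^s$-versus-$|\xi|^{2s}$ comparison as a third error term and control it by the same rescaling argument.

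A smaller point: your claim that the remainder contributes decay of order $|k|^{-d-4s}$ implicitly applies Lemma \ref{jieduan11} with $s$ replaced by $2s$, which leaves the lemma's admissible range $(-\tfrac d2,1)$ once $s\ge\tfrac12$; moreover $R$ is only \emph{bounded} by $|\xi|^{4s}$, not homogeneous, so the sharp-exponent statement is not available anyway. What you actually need, and what the paper establishes, is only that the rescaled error integral is bounded uniformly in $r=|k|$, which already yields $o(|k|^{-d-2s})$ after multiplying by $|k|^{d+2s}$; phrasing the estimate that way sidesteps both difficulties.
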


To the best of our knowledge, this is the first time the precise off‐diagonal decay 
\(\lvert x-y\rvert^{-d-2s}\) on \(\mathbb{Z}^d\) has been derived by purely analytic methods.  
A closely related result was previously obtained from a probabilistic viewpoint in \cite[Theorem 2.4]{bendikov2013alpha}.

	In $\mathbb{R}^d$, the radial symmetry of the kernel allows one to pass immediately to spherical coordinates and then apply Bessel‐function identities to obtain the off–diagonal asymptotics (see \cite[Theorem 2.1]{blumenthal1960some}).  On torus $\mathbb{T}^d$, however, no such polar change of variables is available.  Our first key idea is to insert a smooth cutoff and split the Fourier integral into an interior
	 region (a small ball) and its complement. On the complement, classical estimates yield arbitrarily high order decay (\cite[Theorem 3.2.9]{grafakos2008classical}), while the interior contribution is handled in the following crucial Lemma \ref{jieduan11}.
	 
	  The proof of Lemma \ref{jieduan11} relies on classical properties of Bessel functions, combined with delicate convergence estimates. The full technical details are carried out in Appendix  for the interested reader. Since the untruncated integral in (\ref{wjdss}) is not well-defined, the cutoff function plays an essential role in making the argument rigorous.

\begin{lemma}\label{jieduan11}
	Let \(d\ge1,\) \(-\frac{d}{2}<s<1\) and  \(\omega\in \mathbb{S}^{d-1}\) be an arbitrary fixed unit vector.  Suppose radial function 
	\(\chi\in C_c^\infty(\mathbb{R}^d)\) satisfies
	\[
	\chi(\eta)=
	\begin{cases}
		1, & |\eta|\le \frac{1}{2},\\
		0, & |\eta|\ge 1.
	\end{cases}
	\]
	Then the limit
	\begin{equation}\label{wjdss}
	A_{s,d}
		:=\lim_{N\to\infty}
		\int_{\mathbb{R}^d}|\eta|^{2s}\,\chi\!\Bigl(\frac{\eta}{N}\Bigr)\,e^{\,i\,\omega\cdot\eta}\,d\eta
	\end{equation}
	exists, is finite, and depends only on \(d\) and \(s\), not on the particular choice of \(\chi\) or on \(\omega\).
\end{lemma}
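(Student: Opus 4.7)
The plan is to reduce the oscillatory integral to a one-dimensional Bessel transform, regularize it via repeated integration by parts using a Bessel recurrence, and then handle independence of $\chi$ and $\omega$ by rotational symmetry and a scaling argument.

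Since $|\eta|^{2s}\chi(\eta/N)$ is radial, its Fourier transform is radial; together with $|\omega|=1$, this forces the limit (if it exists) to be independent of $\omega$. Fixing $\omega = e_1$ and using the identity $\int_{\mathbb{S}^{d-1}}e^{ir\theta_1}\,d\sigma(\theta) = (2\pi)^{d/2} r^{-(d-2)/2}J_{(d-2)/2}(r)$, one rewrites
\[
I_N(\chi) = (2\pi)^{d/2}\int_0^\infty r^{2s+d/2}\,\chi_0(r/N)\, J_{(d-2)/2}(r)\,dr,
\]
where $\chi_0$ is the radial profile of $\chi$. Near $r=0$ the integrand is $O(r^{2s+d-1})$, integrable because $s>-d/2$; the difficulty is at infinity, where $J_\nu(r)\sim c\,r^{-1/2}\cos(r-\nu\pi/2-\pi/4)$ yields oscillatory growth of order $r^{2s+(d-1)/2}$.

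To tame this I would integrate by parts using the recurrence $J_\nu(r)=r^{-(\nu+1)}\frac{d}{dr}\bigl(r^{\nu+1}J_{\nu+1}(r)\bigr)$, which lowers the polynomial exponent by one and raises the Bessel index by one. After $k$ iterations with $k>2s+(d+1)/2$, the main remainder
\[
c_k\int_0^\infty r^{2s+d/2-k}\,\chi_0(r/N)\, J_{(d-2)/2+k}(r)\,dr
\]
is absolutely integrable uniformly in $N$; the boundary term at $r=0$ vanishes at every step because the relevant exponent reduces to $2s+d>0$, and the one at $r=\infty$ vanishes by the compact support of $\chi_0(r/N)$. Each step also generates an error term supported in the annulus $\{N/2\le r\le N\}$ and containing a factor $N^{-1}\chi_0'(r/N)$; the substitution $r=Nu$ converts it into an oscillatory integral with $C_c^\infty$ amplitude (since $\chi_0'$ vanishes to infinite order at $1/2$ and at $1$) and phase $Nu$, so the non-stationary phase principle yields decay faster than any polynomial in $N$. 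Dominated convergence in the main remainder then delivers the limit $A_{s,d}$.

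Independence of $\chi$ follows by a scaling argument: for two admissible cutoffs $\chi_1,\chi_2$, the difference vanishes on $\{|\eta|\le 1/2\}$, so $f(\xi):=|\xi|^{2s}(\chi_1(\xi)-\chi_2(\xi))$ lies in $C_c^\infty(\mathbb{R}^d)$; the substitution $\eta=N\xi$ gives $I_N(\chi_1)-I_N(\chi_2)=(2\pi)^{d/2}N^{d+2s}\widehat f(-N\omega)$, and Schwartz-class decay of $\widehat f$ forces this difference to vanish as $N\to\infty$. I expect the principal obstacle to be the bookkeeping in the Bessel-IBP procedure: identifying the correct $k$, verifying that the boundary condition $2s+d>0$ persists at every step, and carrying out the non-stationary phase estimates sharply enough to beat the polynomial $N$-factor arising after rescaling the annular error integrals.
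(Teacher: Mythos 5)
Your proposal is correct, and its skeleton --- polar coordinates, the spherical-average identity $\int_{\mathbb{S}^{d-1}}e^{ir\omega\cdot\theta}\,d\sigma(\theta)=(2\pi)^{d/2}r^{1-d/2}J_{d/2-1}(r)$, repeated integration by parts via $\frac{d}{dr}\bigl[r^{\nu+1}J_{\nu+1}(r)\bigr]=r^{\nu+1}J_{\nu}(r)$, and dominated convergence on the fully regularized main term --- is exactly the paper's. The differences lie in the supporting estimates, and in each case your route is cleaner or more uniform. For the terms carrying at least one derivative of the cutoff, the paper splits into ranges of the index, invokes the explicit formula for $\int_0^\infty r^{\mu}J_{\nu}(r)\,dr$ in the middle range, and for the remaining range uses the crude bound $\lesssim 1+N^{2s+(d+1)/2}$ together with the choice $k>6d$; you instead rescale the annular support to $[1/2,1]$ and dispose of all such terms simultaneously by non-stationary phase, which works provided you insert the full large-argument expansion of $J_{\mu}(Nu)$ (oscillatory terms plus an $O((Nu)^{-M})$ remainder) so that the super-polynomial decay beats the polynomial prefactor left after the substitution $r=Nu$. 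Your independence-of-$\chi$ argument --- $I_N(\chi_1)-I_N(\chi_2)=(2\pi)^{d/2}N^{d+2s}\widehat f(-N\omega)$ with $f=|\xi|^{2s}(\chi_1-\chi_2)\in C_c^\infty(\mathbb{R}^d)$, so Schwartz decay of $\widehat f$ wins --- is a genuine improvement over the paper, which only asserts that independence ``can be seen from the proof.'' Finally, your treatment is uniform in $d\ge1$, since $2\cos r=(2\pi)^{1/2}r^{1/2}J_{-1/2}(r)$ extends the spherical-average identity to $d=1$, whereas the paper handles $d=1$ separately via a lemma of Stein. The one bookkeeping point to confirm is the boundary term at $r=0$: at the $j$-th step it behaves like $r^{2s+d+j-1}$, so the worst case is $j=1$ with exponent $2s+d>0$, exactly as you state.
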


Similarly, the log‐diffusion kernel is given by
\[
p_{\log}(t,x,y)
=\bigl(e^{-t\log(-\Delta)}\delta_y\bigr)(x)
=\frac1{(2\pi)^d}
\int_{[-\pi,\pi]^d}\Phi(\xi)^{-t}\,e^{\,i\,(x-y)\cdot\xi}\,d\xi,
\]
and is well‐defined exactly for $0\le t<\frac d2.$ In particular,
\[
p_{\log}(0,x,y)=\delta_{x,y},
\qquad
p_{\log}(t,x,x)
=\frac1{(2\pi)^d}\int_{[-\pi,\pi]^d}\Phi(\xi)^{-t}\,d\xi
<\infty.
\]

On $\mathbb{R}^d$, the corresponding log‐diffusion kernel \cite{chen2024cauchy}
$$\displaystyle \mathcal P_{\log}(t,x) =\frac1{(2\pi)^d}\int_{\mathbb{R}^d}|\xi|^{-2t}e^{i x\cdot\xi}\,d\xi$$
exhibits a pole of order $(d-2t)^{-1}$ as $t\to\frac d2$ and decays like $\lvert x\rvert^{\,2t-d}$ for large $\lvert x\rvert$. We now analyze its blow‐up and off‐diagonal decay on $\mathbb{Z}^d.$

\begin{proposition}\label{prop:log_blowup_and_delta11}
	Let \(d\ge 1\). Then, for every fixed \(x\ne y\in \mathbb{Z}^d\), we have
	\[
	\lim\limits_{t\rightarrow \frac{d}{2}}\left(d-2t\right)p_{\log}(t,x,y)=|\mathbb{S}^{d-1}|,
	\]
	where $\mathbb{S}^{d-1}$ is the unit sphere in $\mathbb{R}^d.$
\end{proposition}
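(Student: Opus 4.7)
The singular behaviour of $p_{\log}(t,x,y)$ as $t\to d/2$ is concentrated entirely at the origin in frequency space, where $\Phi(\xi)\sim|\xi|^{2}$ makes the integrand $\Phi(\xi)^{-t}$ fail to be integrable precisely at $t=d/2$.  The plan is to isolate this contribution with a smooth cutoff near $\xi=0$, reduce it to the classical Riesz-type integral of $|\xi|^{-2t}$, and show that every remaining piece is uniformly bounded in $t$ and therefore annihilated by the prefactor $(d-2t)\to0$.

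Concretely, I would fix a radial $\chi\in C_c^\infty(\mathbb{R}^d)$ with $\chi\equiv1$ on $B(0,\tfrac12)$ and supported in $B(0,1)\subset[-\pi,\pi]^d$, and split
\begin{equation*}
(d-2t)\,p_{\log}(t,x,y)=\frac{d-2t}{(2\pi)^d}\!\int_{[-\pi,\pi]^d}\!\chi(\xi)\Phi(\xi)^{-t}e^{i(x-y)\cdot\xi}d\xi+\frac{d-2t}{(2\pi)^d}\!\int_{[-\pi,\pi]^d}\!(1-\chi(\xi))\Phi(\xi)^{-t}e^{i(x-y)\cdot\xi}d\xi.
\end{equation*}
On the support of $1-\chi$ the symbol $\Phi$ is bounded below by a positive constant, so $\Phi^{-t}$ is uniformly bounded for $t$ near $d/2$ and the second integral vanishes once multiplied by $(d-2t)\to 0$. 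For the first integral I would use $\Phi(\xi)=|\xi|^{2}+O(|\xi|^{4})$ to write $\Phi(\xi)^{-t}=|\xi|^{-2t}+R_{t}(\xi)$ with $|R_{t}(\xi)|\lesssim|\xi|^{2-2t}$ uniformly in $t$ near $d/2$, and expand $e^{i(x-y)\cdot\xi}=1+(e^{i(x-y)\cdot\xi}-1)$ with second factor of size $O(|\xi|)$ since $x-y$ is fixed. The three resulting cross terms are pointwise dominated by $|\xi|^{1-2t}$ or $|\xi|^{2-2t}$, each of which is integrable on $B(0,1)$ uniformly for $t$ in a small neighbourhood of $d/2$ (since $1-2t>-d$ and $2-2t>-d$ there); hence these contributions all vanish in the limit.

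The only surviving piece is
\begin{equation*}
\frac{d-2t}{(2\pi)^d}\int_{\mathbb{R}^d}\chi(\xi)\,|\xi|^{-2t}\,d\xi=\frac{(d-2t)\,|\mathbb{S}^{d-1}|}{(2\pi)^d}\int_{0}^{\infty}\chi(r)\,r^{d-1-2t}\,dr,
\end{equation*}
where radiality of $\chi$ has been used in passing to polar coordinates. An integration by parts in $r$, pairing $r^{d-1-2t}$ with the antiderivative $r^{d-2t}/(d-2t)$, cancels the prefactor $(d-2t)$ and produces a limit that can be evaluated explicitly as $t\to d/2$ to yield the stated constant $|\mathbb{S}^{d-1}|$. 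The main technical obstacle I anticipate is establishing the remainder bound $|R_{t}(\xi)|\lesssim|\xi|^{2-2t}$ with an implicit constant uniform in $t$ on a neighbourhood of $d/2$, together with a uniform lower bound $\Phi(\xi)\gtrsim|\xi|^{2}$ on $B(0,1)$; only then is the dominated-convergence argument that collapses the error integrals actually justified, and the polar-coordinate reduction to a single explicit limit becomes legitimate.
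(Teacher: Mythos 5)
Your proposal is correct in substance but takes a genuinely different route from the paper. Both proofs begin identically: a smooth radial cutoff isolates a neighbourhood of $\xi=0$, and the complementary piece is uniformly bounded in $t$ near $\tfrac d2$, hence killed by the prefactor $(d-2t)$. For the singular piece, however, the paper performs a time-rescaling $\xi=\eta\,(d-2t)^{\alpha}$ and passes to the limit inside the rescaled integral, replacing $\Phi$ by $|\cdot|^{2}$ and discarding the oscillatory phase in one step; this is quick but leaves the interchange of limits only loosely justified. You instead expand the symbol, $\Phi(\xi)^{-t}=|\xi|^{-2t}+R_t(\xi)$ with $|R_t(\xi)|\lesssim|\xi|^{2-2t}$, and the phase, $e^{i(x-y)\cdot\xi}=1+O(|\xi|)$, and check that every error term is dominated by $|\xi|^{1-2t}$ or $|\xi|^{2-2t}$, each uniformly integrable on $B(0,1)$ for $t$ in a left neighbourhood of $\tfrac d2$ (since $|\xi|^{a-2t}\le|\xi|^{a-2t_1}$ there for $|\xi|\le1$ and a fixed $t_1<\tfrac{d+a}{2}$); these are then annihilated by $(d-2t)$. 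This reduction to the explicit radial integral $(d-2t)\int_0^\infty\chi(r)r^{d-1-2t}\,dr\to\chi(0)=1$ is arguably cleaner and more rigorous than the paper's rescaling, at the modest cost of verifying the uniform remainder bound, which is straightforward since $\Phi(\xi)/|\xi|^{2}=1+O(|\xi|^{2})$ is bounded away from $0$ and $\infty$ on $B(0,1)$.

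One point you should reconcile: your own displayed formula carries the prefactor $(2\pi)^{-d}$ from the Fourier inversion in the definition of $p_{\log}$, so the limit your computation actually produces is $|\mathbb{S}^{d-1}|/(2\pi)^{d}$, not $|\mathbb{S}^{d-1}|$; asserting that the radial integral "yields the stated constant $|\mathbb{S}^{d-1}|$" does not follow from the line above it. Note, however, that the paper's own proof has the identical discrepancy (it establishes $\lim_{t\to d/2}(d-2t)I_1=|\mathbb{S}^{d-1}|$ while $p_{\log}=\frac{1}{(2\pi)^d}(I_1+I_2)$), so this appears to be a normalization slip in the statement of the proposition rather than a defect specific to your argument; you should simply state the constant your computation delivers rather than forcing agreement with the quoted value.
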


	On \(\mathbb{Z}^d\), the proof of time blow-up behavior as \(t \to \frac{d}{2}\) is fundamentally different from the continuous case in \(\mathbb{R}^d\). In \(\mathbb{R}^d\), one can directly exploit the explicit formula for the Fourier transform of radial functions to extract the asymptotics. However, on the discrete lattice \(\mathbb{Z}^d\), such exact expressions are not available. Instead, our approach relies on a time-rescaling argument, where the main idea is to recognize that the dominant contribution to the asymptotic behavior comes from the region where \(\xi\) is small. This allows us to isolate the leading singularity as \(t \to \frac{d}{2}\) by focusing on the low-frequency behavior of the symbol.

\begin{proposition}\label{prop:log_offdiag_asym11}
	Fix \(d\ge1\) and \(t\in(0,\tfrac d2)\).  As \(\lvert x-y\rvert\to\infty\), the discrete log‐diffusion kernel satisfies
	\[
	\lim\limits_{\lvert x-y\rvert\rightarrow \infty}\lvert x-y\rvert^{\,d-2t}p_{\log}(t,x,y)=\frac{A_{-t,d}}{\left(2\pi\right)^d},
	\]
		where $A_{-t,d}$ is given by Lemma \ref{jieduan11}.
\end{proposition}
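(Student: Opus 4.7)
Set $R=|x-y|$ and $\omega=(x-y)/R\in\mathbb{S}^{d-1}$, so that
\[
p_{\log}(t,x,y)=\frac{1}{(2\pi)^d}\int_{[-\pi,\pi]^d}\Phi(\xi)^{-t}e^{iR\omega\cdot\xi}\,d\xi.
\]
The plan is to mirror the approach used for Proposition~\ref{prop:tail_asymptotic11}, now applying Lemma~\ref{jieduan11} with parameter $s=-t\in(-d/2,0)\subset(-d/2,1)$ after isolating the singularity of $\Phi(\xi)^{-t}$ at $\xi=0$. Fix a radial $\psi\in C_c^\infty(\mathbb{R}^d)$ with $\psi\equiv1$ near $0$ and $\operatorname{supp}\psi\subset\{|\xi|\le\epsilon\}\subset[-\pi,\pi]^d$ for $\epsilon$ small enough that $\Phi(\xi)=|\xi|^2(1+f(\xi))$ with $|f(\xi)|\le 1/2$ and $f(\xi)=O(|\xi|^2)$ on its support. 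Because $\Phi$ is smooth and strictly positive on $[-\pi,\pi]^d\setminus\{0\}$, the function $(1-\psi)\Phi^{-t}$ extends smoothly and periodically to $\mathbb{T}^d$, so its $(x-y)$-th Fourier coefficient decays faster than any polynomial in $R$; this piece contributes $o(R^{2t-d})$ to $p_{\log}(t,x,y)$.

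For the cutoff part, introduce a second, finer cutoff $\psi_1(\xi)=\chi(\xi R^{a})$ at scale $R^{-a}$, with $\chi$ the radial cutoff of Lemma~\ref{jieduan11} and $a\in(0,1)$ to be chosen. On the intermediate annulus $\{R^{-a}\le|\xi|\le\epsilon\}$ where $\psi-\psi_1$ is supported, $\Phi(\xi)^{-t}$ is smooth and satisfies $|\partial^{\alpha}\Phi^{-t}(\xi)|\lesssim|\xi|^{-2t-|\alpha|}\le R^{a(2t+|\alpha|)}$, while derivatives of $\psi-\psi_1$ scale as $R^{a|\alpha|}$. After $M$ integrations by parts against $e^{iR\omega\cdot\xi}$, this yields
\[
\Bigl|\int(\psi-\psi_1)\Phi^{-t}e^{iR\omega\cdot\xi}\,d\xi\Bigr|\lesssim R^{-M(1-a)+2at},
\]
which after multiplication by $R^{d-2t}$ becomes $o(1)$ for any sufficiently large $M$. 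On the inner ball $\{|\xi|\le R^{-a}\}$ I replace $\Phi(\xi)^{-t}$ by its leading part $|\xi|^{-2t}$; the pointwise estimate $|\Phi(\xi)^{-t}-|\xi|^{-2t}|\lesssim|\xi|^{2-2t}$ gives an error bounded by $CR^{-a(d+2-2t)}$, so after the $R^{d-2t}$ weight this error is of order $R^{(d-2t)(1-a)-2a}$, which tends to $0$ provided $a>\frac{d-2t}{d-2t+2}$. Such an $a<1$ exists precisely because $d>2t$.

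The remaining main piece, after the change of variables $\eta=R\xi$ and setting $N=R^{1-a}$, equals
\[
\frac{R^{d-2t}}{(2\pi)^d}\int\psi_1(\xi)|\xi|^{-2t}e^{iR\omega\cdot\xi}\,d\xi
=\frac{1}{(2\pi)^d}\int_{\mathbb{R}^d}\chi(\eta/N)|\eta|^{-2t}e^{i\omega\cdot\eta}\,d\eta,
\]
and since $N\to\infty$ as $R\to\infty$, Lemma~\ref{jieduan11} applied with $s=-t$ delivers the limit $A_{-t,d}/(2\pi)^d$. The main technical obstacle is the annular integration-by-parts estimate: one has to track carefully how the negative powers of $|\xi|$ coming from differentiating $\Phi^{-t}$ interact with the positive powers of $R^a$ coming from differentiating $\psi-\psi_1$, and verify that finitely many integrations suffice to defeat the $R^{d-2t}$ prefactor for some admissible $a<1$. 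The strict inequality $t<d/2$ is precisely what makes such an $a$ available, which explains the restriction to the stated range of $t$.
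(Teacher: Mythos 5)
Your proof is correct, and the overall strategy coincides with the paper's: isolate the singularity of $\Phi(\xi)^{-t}$ at $\xi=0$ with a smooth radial cutoff, kill the smooth periodic remainder by repeated integration by parts, rescale $\eta=R\xi$, and invoke Lemma~\ref{jieduan11} with $s=-t\in(-d/2,0)$. Where you genuinely diverge is in the treatment of the near-origin piece. The paper keeps the full ball $\{|\xi|\le\delta\}$, rescales it in one step (so the lemma is applied with $N=r\delta$), and disposes of the linearization error $\Phi(\xi)^{-t}-|\xi|^{-2t}$ by asserting a bound on the resulting oscillatory integral ``completely analogous to the analysis in Lemma~\ref{jieduan11}'' --- i.e.\ it still relies on cancellation. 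You instead insert a second cutoff at the intermediate scale $R^{-a}$ with $\tfrac{d-2t}{d-2t+2}<a<1$: on the inner ball the error $O(|\xi|^{2-2t})$ is small enough to be integrated \emph{absolutely} (no oscillation needed), and the price is a non-stationary-phase estimate on the annulus $R^{-a}\le|\xi|\le\epsilon$, where the symbol bounds $|\partial^\alpha\Phi^{-t}|\lesssim|\xi|^{-2t-|\alpha|}$ make finitely many integrations by parts defeat the weight $R^{d-2t}$. Your route is more self-contained and makes the role of the hypothesis $t<d/2$ completely explicit (it is exactly what allows an admissible $a<1$), at the cost of the extra annulus computation; the paper's route is shorter on the page but leans on an error estimate whose justification is only sketched. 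One small point worth stating explicitly in your write-up: since $\chi$ and $|\eta|^{-2t}$ are both radial, the integral $\int\chi(\eta/N)|\eta|^{-2t}e^{i\omega\cdot\eta}\,d\eta$ is independent of $\omega$ for each fixed $N$, so the convergence supplied by Lemma~\ref{jieduan11} is automatically uniform in the direction $\omega=(x-y)/|x-y|$, which is needed because $\omega$ varies along the limit $|x-y|\to\infty$.
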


 The argument follows the fractional heat kernel case almost identically, employing the same smooth cutoff construction and relying critically on Lemma \ref{jieduan11}.

The large-time behavior and off-diagonal asymptotics of the fractional and logarithmic diffusion kernels on $\mathbb{R}^d$ and $\mathbb{Z}^d$ are clearly summarized in the two tables below.

\begin{table}[ht]
	\centering
	\caption{Comparison of \(\mathcal{P}_s\) and \(\mathcal{P}_{\log}\) on \(\mathbb{R}^d\)}
	\label{tab:Rn-kernels}
	\begin{tabular}{lcccc}
		\toprule
		Kernel & Lifespan & Time asymptotics & Behavior at \(x=0\) & Decay as \(\lvert x\rvert\to\infty\) \\
		\midrule
		\(\mathcal{P}_s(t,x)\)        & \((0,\infty)\)              & \(t^{-d/(2s)}\) decay at infty      & smooth        & \(\lvert x\rvert^{-d-2s}\)   \\
		\(\mathcal{P}_{\log}(t,x)\)    & \(\bigl(0,\tfrac d2\bigr)\) & blow-up rate \((d-2t)^{-1}\) & singular  $|x|^{2t-d} $   & \(\lvert x\rvert^{2t-d}\)    \\
		\bottomrule
	\end{tabular}
\end{table}

\begin{table}[ht]
	\centering
	\caption{Comparison of \(p_s\) and \(p_{\log}\) on \(\mathbb{Z}^d\)}
	\label{tab:Zd-kernels}
	\begin{tabular}{lcccc}
		\toprule
		Kernel & Lifespan & Time asymptotics & Decay as \(\lvert x-y\rvert\to\infty\) \\
		\midrule
		\(p_s(t,x,y)\)     & \((0,\infty)\)              & \(t^{-d/(2s)}\) decay at infty    & \(\lvert x-y\rvert^{-d-2s}\)  \\
		\(p_{\log}(t,x,y)\) & \(\bigl(0,\tfrac d2\bigr)\) & blow-up rate \((d-2t)^{-1}\)                      & \(\lvert x-y\rvert^{2t-d}\)   \\
		\bottomrule
	\end{tabular}
\end{table}

The remainder of the paper is organized as follows.  In Section 2, we provide a brief review of the foundational setting of weighted graphs, including Dirichlet and Neumann forms, as well as the functional calculus for graph Laplacians. Section 3 is devoted to the Bochner‐integral definition of the logarithmic Laplacian on infinite graphs.  In Section 4, we consider the case of weighted lattice graphs and obtain sharp asymptotics for the associated logarithmic kernel, which are then used to prove several convergence and integrability properties of the logarithmic Laplacian. Finally, we turn to the Fourier‐analytic perspective on $\mathbb{Z}^d$, compute the symbols of both the fractional Laplacian and the logarithmic Laplacian, analyze their large-time and off-diagonal asymptotic behavior, and derive the precise asymptotic coefficients. The complete proof of Lemma \ref{jieduan11} is provided in Section 5.

Throughout this paper, \(C\) denotes a generic positive constant, whose value may change from line to line and does not depend on the main parameters.  We also use the symbols
\[
A \lesssim B
\quad\text{and}\quad
A \gtrsim B
\]
to indicate
\[
A \le C\,B
\quad\text{and}\quad
A \ge \frac1C\,B
\]
for constant \(C>0\), respectively. In particular, \(A\sim B\) means both \(A\lesssim B\) and \(A\gtrsim B\) hold.

\section{Background and Preliminaries}

	\subsection{Foundations of Graph Theory}

	We begin by recalling some basic notions related to weighted graphs. Let \( V \) be a countable set of vertices, and let \( E = \{ \{x, y\} \subset V : x \sim y \} \) be the set of undirected edges, where \( x \sim y \) means that the vertices \( x \) and \( y \) are adjacent. A graph is called locally finite if for all \( x \in V \), the set \( \{ y \in V : x \sim y \} \) is finite; it is called connected if any two vertices can be joined by a finite path. 
	
	Each vertex \( x \in V \) is assigned a measure \( \mu(x) > 0 \), each edge \( \{x, y\} \in E \) is assigned a symmetric weight \( w_{xy} = w_{yx} > 0 \), and if  \( \{x, y\} \notin E \), $w_{xy}=0.$ The quadruple \( G = (V, E, \mu, w) \) is referred to as a weighted graph. The degree of a vertex \( x \in V \) is defined by $	m(x) := \sum_{y \sim x} w_{xy}.$
	The graph distance \( d(x, y) \) between two vertices \( x, y \in V \) is defined as the minimal number of edges in any path connecting \( x \) and \( y \). For a vertex $x \in V$ and radius $r \ge 0$, we define the (closed) ball centered at $x$ by
	\[
	B(x, r) := \{ y \in V : d(x, y) \le r \}.
	\]
	Given a subset $A \subset V$, we define its volume by $V(A) := \sum_{x \in A} \mu(x).$
	In particular, we write $V(x, r)$ to denote the volume of the ball $B(x, r)$, that is, $	V(x, r) := V(B(x, r)).$
	Moreover, a weighted graph is called stochastically complete \cite{wojciechowski2008stochastic,hua2017stochastic,folz2014volume} if 
	\[\sum_{y\in V}p\left(t,x,y\right)\mu\left(y\right)=1,\:\:\forall t\ge 0,\:x\in V.\]
	
The requirement that graphs be stochastically complete mirrors the corresponding condition in the classical continuous setting. This reflects the fact that the standard heat equation does not include any absorption or killing terms. From the probabilistic point of view, this corresponds to the total probability being preserved under Brownian motion, which is the diffusion process generated by the Laplacian.

	Let \( C(V) \) denote the set of all real-valued functions on \( V \), and \( C_c(V) \subset C(V) \) denote the subspace of functions with finite support. For a function \( u \in C(V) \), its integral over the graph is given by $	\int_V u\, d\mu := \sum_{x \in V} u(x)\mu(x).$
	For \( p \in [1, \infty) \), we define the space \( \ell^p(V) \) of functions with finite \( p \)-norm:
	\[
	\|u\|_{\ell^p(V)} := \left( \sum_{x \in V} |u(x)|^p \mu(x) \right)^{1/p}.
	\]
	The space \( \ell^\infty(V) \) consists of all bounded functions on \( V \), with norm $\|u\|_{\ell^\infty(V)} := \sup_{x \in V} |u(x)|.$

Next, we consider that $G = (V, E, \mu, w)$ is a weighted graph, where $\mu$ is a positive measure, and $w : V \times V \to [0,\infty)$ is a symmetric edge function satisfying $w_{xy} = w_{yx},w_{xx} = 0$ and 
\[\sum_{y\in V}w_{xy}<\infty \quad \text{for all}\:\:x\in V.\]
In particular, the locally finite graph satisfies this condition.

Define the subspace of $C\left(V\right)$ given by
\[\mathcal{D}=\left\{ u \in C\left(V\right) \;\middle|\; \sum_{x,y \in V} w_{xy} \bigl(u(x) - u(y)\bigr)^2 < \infty \right\},\]
and the bilinear map
\[
\mathcal{Q}(u,v) := \frac{1}{2} \sum_{x,y \in V} w_{xy} \bigl(u(x) - u(y)\bigr)\bigl(v(x) - v(y)\bigr),
\quad u,v \in \mathcal{D}.
\]
We consider the energy norm
\[
\|u\|_Q := \left( \mathcal{Q}(u) + \|u\|_{\ell^2(V,\mu)}^2 \right)^{1/2},
\]
and define the form $Q^{\left(N\right)}$ as the restriction of $Q$ to
\[
\operatorname{Dom}(Q^{(N)}) := \ell^2(V,\mu)\cap \mathcal{D}.
\]
We refer to $Q^{(N)}$ as the Neumann form, as it corresponds to the maximal closed extension of the energy form without imposing vanishing conditions at the boundary. The space $\left(\operatorname{Dom}(Q^{(N)}),\|\cdot\|_Q\right)$ becomes a Hilbert space.

By the theory of closed forms, there exists a unique positive operator $-L^{(N)}$ such that
\[
Q^{(N)}(u,v) = \langle -L^{(N)} u, v \rangle_{\ell^2(V,\mu)},
\quad \text{for all } u \in \operatorname{Dom}(-L^{(N)}),\; v \in \operatorname{Dom}(Q^{(N)}).
\]
In this paper, $A$ is called positive operator if $A$ is self-adjoint and $\sigma(A)\subset [0,\infty).$ The operator $L^{(N)}$ is called the Neumann Laplacian.

To impose Dirichlet-type conditions, we consider the subspace \( C_c(V) \subset \ell^2(V,\mu) \) of finitely supported functions. The Dirichlet form \( Q^{(D)} \) is defined as the closure of the restriction of \( Q^{(N)} \) to \( C_c(V) \) with respect to the norm $\|\cdot\|_Q$. That is,
\[
\operatorname{Dom}(Q^{(D)}) := \overline{C_c(V)}^{\|\cdot\|_Q}, \qquad
Q^{(D)} := Q^{(N)}\big|_{\operatorname{Dom}(Q^{(D)})}.
\]
The corresponding unique self-adjoint operator \( L^{(D)} \), called the Dirichlet Laplacian with $\sigma(-L^{(D)}) \subset [0,\infty).$ If \( C_c(V) \) is dense in \( \operatorname{Dom}(Q^{(N)}) \) with respect to the norm \( \|\cdot\|_Q \), then \( Q^{(N)} = Q^{(D)} \), the Neumann and Dirichlet Laplacians coincide and we simply write
\[
Q := Q^{(N)} = Q^{(D)}, \quad
L := L^{(N)} = L^{(D)}.
\]

In general, the domain of the Dirichlet Laplacian $L^{(D)}$ is difficult to describe explicitly. Nevertheless, the pointwise action of the operator is easily understood through its associated formal expression. To this end, we introduce the formal Laplace operator $\Delta$. It acts on functions $u : V \to \mathbb{R}$ via
\[
\Delta \:u(x) := \frac{1}{\mu(x)} \sum_{y \in V} w_{xy} \bigl( u(y) - u(x) \bigr), \quad x \in V,
\]
well defined in
\[
\mathcal{F}=\Bigl\{\,u\in C(V)\;\Bigm|\;\sum_{y\in V}w_{xy}\,\lvert u(y)\rvert<\infty, \text{for all }x\in V\Bigr\} .
\]
It is obvious that $\operatorname{Dom}(Q^{\left(N\right)}) \subset \mathcal{D}\subset \mathcal{F}.$

This formal Laplacian can be used to identify the $L^{\left(D\right)}$ (see \cite[Theorem~1.6]{keller2021graphs}), and serves as the discrete analogue of the classical Laplace operator in Euclidean space and plays a fundamental role in analysis on graphs. Two special cases are of particular interest. The first is when the measure is uniform, i.e., \(\mu \equiv 1\), which corresponds to the so-called standard graph Laplacian. The second is when the measure \(\mu(x)  = m(x)\), in which case the resulting operator is the normalized Laplacian.

In this subsection we address the uniqueness of self‐adjoint extensions of the formal Laplacian $\Delta$ when restricted to the space of finitely supported functions.

Recall that a symmetric operator on a dense domain in a Hilbert space is said to be essentially self‐adjoint if it admits exactly unique self‐adjoint extension.

Let
\[
\Delta_{\min} := \Delta\big|_{\operatorname{Dom}(\Delta_{\min})}, 
\qquad
\operatorname{Dom}(\Delta_{\min}) = C_c(V),
\]
provided $\Delta \:C_c(V)\subset \ell^2(V,\mu)$.  \cite[Theorem~1.29]{keller2021graphs} characterizes this condition. In particular, this condition is satisfied if the graph is locally finite or if $\mu_{min}:=\inf_{x\in V} \mu(x)>0.$

By Green’s formula (\cite[Proposition 1.5]{keller2021graphs}), $\Delta_{\min}$ is symmetric. 
Under these hypotheses, $L^{(D)}$ provides at least one self‐adjoint extension of $\Delta_{\min}$, and the question of essential self‐adjointness reduces to whether any other self‐adjoint extensions exist.

From \cite{keller2021graphs}, we know that
\[\operatorname{Dom}\left(-\Delta_{min}\right)\subset \operatorname{Dom}\left(-L^{\left(D\right)}\right)\subset \operatorname{Dom}\left(\left(-\Delta_{min}\right)^{*}\right)\subset \mathcal{F}.\]
By general operator theory, $\Delta_{\min}$ is essentially self‐adjoint if and only if $\left(-\Delta_{min}\right)^*$ is self‐adjoint if and only if $\operatorname{Dom}\left(L^{\left(D\right)}\right)= \operatorname{Dom}\left(\left(-\Delta_{min}\right)^{*}\right)$. Thus, if $\Delta_{min}$ is essentially self‐adjoint, then $Q^{\left(D\right)}=Q^{\left(N\right)},$ so $-L^{\left(D\right)}=-L^{\left(N\right)}=\left(-\Delta_{min}\right)^{*}$ is unique self‐adjoint extension.

The following two conditions can ensure $\Delta_{min}$ is essentially self‐adjoint \cite[Theorem~6]{keller2012dirichlet}.

\begin{description}
	\item[($C_1$)] Uniform positivity of the measure: $\mu_{min}:=\inf_{x\in V}\mu(x)>0.$
\end{description}
We also consider a weaker, path‐based growth requirement:

\begin{description}
	\item[$\left(C_2\right)$] Infinite paths carry infinite measure:
	For every sequence of distinct vertices $\left\{x_n\right\}_{n\ge1}$ satisfying $x_n\sim x_{n+1}$ for all $n$, one has
	\[
	\sum_{n=1}^\infty \mu(x_n) =\infty.
	\]
\end{description}
Clearly, $(C_1)$ implies $\left(C_2\right).$

Throughout the remainder of this paper, we assume that $\Delta\big|_{C_c\left(V\right)}$ is essentially self‐adjoint.  In that case its unique self‐adjoint extension satisfies
\[
-\tilde{\Delta}:=-L^{(D)}=-L^{(N)}=\bigl(-\Delta_{\min}\bigr)^*.
\]
For the sake of simplicity, we will henceforth write \(-\Delta\) in place of \(-\tilde{\Delta}\), which is a positive operator with domian
\[
\operatorname{Dom}(-\Delta)
\;=\;\bigl\{\,u\in\ell^2(V,\mu)\;\big|\;-\Delta u\in \ell^2(V,\mu)\bigr\}.
\]

In particular, if connected graph is locally finite and satisfies condition \((C_2)\), or more generally if it satisfies condition \((C_1)\), then $\Delta_{min}$ is essentially self‐adjoint.

\subsection{Framework of Functional Calculus}\label{sec:spectral}

In this subsection, we begin to develop a functional calculus framework for Laplace operator on Graphs. Specifically, we restrict our attention to several prototypical positive operators, such as
\begin{itemize}
	\item \textbf{The (combinatorial) Laplacian on a finite graph.}  
	Here $-\Delta$ acts on $\ell^2(V,\mu)$, where $V$ is a finite vertex set.  It is a bounded positive operator.
	\item \textbf{The minimal Laplacian on an infinite graph.}  
	One begins with 
	\[
	-\Delta_{\min}\colon C_c(V)\subset \ell^2(V)\to\ell^2(V),
	\]
	defined on finitely supported functions, which admits a unique essentially self‐adjoint extension $-\Delta$ on $\ell^2(V,\mu)$.
\end{itemize}
 In what follows, we will uniformly denote the Laplacian operator in all cases by $-\Delta$.

By the spectral theorem for the positive operator \(-\Delta\) on the Hilbert space \(\ell^{2}(V,\mu)\), there exists a unique projection–valued measure
\[
E:\mathcal B\bigl([0,\infty)\bigr)\;\longrightarrow\;
\Bigl\{\text{orthogonal projections on }\ell^{2}(V,\mu)\Bigr\},
\]
supported on the spectrum \(\sigma(-\Delta)\subset[0,\infty)\), such that
\[
-\Delta
=
\int_{[0,\infty)}\lambda\,dE(\lambda).
\]
In particular, for any \(u,v\in\ell^{2}(V,\mu),\) the complex measure
\[
E_{u,v}(B)
:=\langle E(B)\,u,\;v\rangle_{\ell^{2}(V,\mu)},
\quad B\subset[0,\infty)\ \text{Borel},
\]
has total variation
\(\lvert E_{u,v}\rvert([0,\infty))\le\|u\|\,\|v\|\).

Given any Borel function \(\varphi:[0,\infty)\to\mathbb{R}\), one defines the operator
\[
\varphi(-\Delta)
:=\int_{[0,\infty)}\varphi(\lambda)\,dE(\lambda)
\]
with domain
\[
\operatorname{Dom}\bigl(\varphi(-\Delta)\bigr)
=\Bigl\{\,u\in\ell^{2}(V,\mu):\int_{[0,\infty)}\bigl|\varphi(\lambda)\bigr|^2
\,dE_{u,u}(\lambda)<\infty\Bigr\}.
\]
Its action is characterized by the identity
\[
\langle \varphi(-\Delta)u,\;v\rangle
=\int_{[0,\infty)}\varphi(\lambda)\,dE_{u,v}(\lambda),
\quad u\in\operatorname{Dom}(\varphi(-\Delta)),\;v\in\ell^{2}(V,\mu),
\]
and the associated norm identity
\[
\bigl\|\varphi(-\Delta)u\bigr\|^2
=\int_{[0,\infty)}\bigl|\varphi(\lambda)\bigr|^2\,dE_{u,u}(\lambda).
\]

If $\varphi$ is bounded, then $$\|\varphi(-\Delta)\|=\|\varphi\big|_{\sigma\left(-\Delta\right)}\|_{L^{\infty}};$$
and if $\varphi$ is real–valued (resp.\ non–negative), 
then $\varphi(-\Delta)$ is self–adjoint (resp.\ positive).

For $s\ge 0$ define
\[
(-\Delta)^{s}
\;=\;
\int_{[0,\infty)}\lambda^{s}\,\mathrm dE(\lambda),
\]
with 
\[\operatorname{Dom}\left(\left(-\Delta\right)^s\right)=\Bigl\{u\in\ell^{2}(V,\mu):
\int_{[0,\infty)}\lambda^{2s}\,\mathrm dE_{u,u}(\lambda)<\infty
\Bigr\}:=H^{2s}(V),\]
Endowed with the inner product
\[
\langle u,v\rangle_{H^{2s}}
\;=\;
\int_{[0,\infty)}(1+\lambda)^{2s}\,\mathrm dE_{u,v}(\lambda),
\qquad
\|u\|_{H^{2s}}^2
=\langle u,u\rangle_{H^{2s}},\] 
It is obvious that \(H^{s}(V)\) is a inner product  space and
\[
u\in H^{2s}(V)
\;\Longleftrightarrow\;
(-\Delta)^{s}u\in\ell^{2}(V,\mu), u\in \ell^{2}(V,\mu)
\]
since the homogeneous Sobolev seminorm
\[
\|(-\Delta)^{s}u\|_{\ell^{2}\left(V,\mu\right)}^{2}
=
\int_{[0,\infty)}\lambda^{2s}\,\mathrm dE_{u,u}(\lambda).
\]
Thus, $H^{s}\left(V\right)$ is continuously embedded in $\ell^{2}\left(V,\mu\right).$ In particular, if $-\Delta$ is a unique essential self-adjoint extension of $-\Delta_{min}$ on infinite graphs, then  $$C_c\left(V\right)\subset H^{2}\left(V\right)=\operatorname{Dom}\left(-\Delta\right)\subset H^{2s}\left(V\right),s\in \left(0,1\right).$$

Suppose the bottom of the spectrum of $-\Delta$ is \(\lambda_{0}>0\), then
\[
\sigma(-\Delta)\;\subset\;[\lambda_{0},\infty).
\]
By the functional calculus,
\[
\|(-\Delta)^{s}u\|_{\ell^{2}\left(V,\mu\right)}^{2}
=\int_{[0,\infty)}\lambda^{2s}\,dE_{u,u}(\lambda)
\;=\;
\int_{[\lambda_0,\infty)}\lambda^{2s}\,dE_{u,u}(\lambda)
\ge \lambda_{0}^{2s}\,\|u\|_{\ell^{2}\left(V,\mu\right)}^{2}.
\]
Hence
\[
\|(-\Delta)^{s}u\|_{\ell^{2}\left(V,\mu\right)}
\;\ge\;
\lambda_{0}^{\,s}\,\|u\|_{\ell^{2}\left(V,\mu\right)},
\]
for all $u\in H^{2s}\left(V\right)$. Hence, the homogeneous Sobolev seminorm and the \(H^{s}\)‐norm are equivalent on \(H^{s}\left(V\right)\).

The condition \(\inf\sigma(-\Delta)=\lambda_{0}>0\) is not met by finite graphs (for which the constant function belongs to \(\ell^{2}(V,\mu)\) and gives eigenvalue \(0\)).  It is, however, satisfied in several natural infinite settings, such as infinite \(k\)-regular trees, \(k\ge3\).  For the (unnormalised) Laplacian on the infinite \(k\)-regular tree \(\mathbb T_k\) one has 
	\[
	\sigma(-\Delta)=\bigl[k-2\sqrt{k-1},\,k+2\sqrt{k-1}\bigr],
	\]
	so the bottom of the spectrum is
	\(\lambda_{0}=k-2\sqrt{k-1}>0\).

	A positive spectral gap is, however, merely a sufficient condition for this equivalence.
	Many graphs with \(\inf\sigma(-\Delta)=0\), for instance, lattices \(\mathbb Z^{d}\)
	endowed with standard Laplacian, or graphs satisfying suitable volume-growth
	and Poincaré inequalities, still enjoy this  equivalence, see \cite[Lemma 2.1]{wang2024fractional}.

We now state the following convergence results, which can be proved via functional calculus; for detailed proofs, see \cite{chen2025logarithmic}.

\begin{proposition}\label{slx1}
	For every \(u\in H^{\epsilon}(V)\), \(\epsilon>0\), 
	\[
	\lim_{s\to0^{+}}\bigl\|(-\Delta)^{s}u - u+E(\{0\})u\bigr\|_{\ell^{2}(V,\mu)} = 0.
	\]
\end{proposition}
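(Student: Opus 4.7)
The plan is to reduce the convergence to a question about a scalar integral via the spectral theorem, then finish by dominated convergence. By the functional calculus established in Subsection~\ref{sec:spectral}, for any $u\in H^\epsilon(V)\subset \ell^2(V,\mu)$ one has
\[
\bigl\|(-\Delta)^{s}u-u+E(\{0\})u\bigr\|_{\ell^{2}(V,\mu)}^{2}
=\int_{[0,\infty)}\bigl|\lambda^{s}-1+\mathbf 1_{\{0\}}(\lambda)\bigr|^{2}\,dE_{u,u}(\lambda),
\]
so it suffices to show the right–hand side tends to $0$ as $s\to 0^{+}$.

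Next I would verify pointwise convergence of the integrand. Writing $f_{s}(\lambda):=\lambda^{s}-1+\mathbf 1_{\{0\}}(\lambda)$, at $\lambda=0$ we have $f_{s}(0)=0-1+1=0$, while for every fixed $\lambda>0$, $\lambda^{s}\to 1$ as $s\to 0^{+}$, so $f_{s}(\lambda)\to 0$. Hence $|f_s|^2\to 0$ pointwise on $[0,\infty)$.

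The main (and only real) obstacle is to produce a single dominating function in $L^{1}(dE_{u,u})$ that controls $|f_{s}|^{2}$ uniformly for all sufficiently small $s$. I would restrict attention to $s\in(0,\epsilon/2]$ and split $[0,\infty)=\{0\}\cup(0,1]\cup(1,\infty)$. On $\{0\}$ and on $(0,1]$ we have $\lambda^{s}\in[0,1]$, hence $|f_{s}(\lambda)|^{2}\le 1$. On $(1,\infty)$, using monotonicity in the exponent together with $s\le \epsilon/2$, we obtain
\[
|f_{s}(\lambda)|^{2}=(\lambda^{s}-1)^{2}\le \lambda^{2s}\le \lambda^{\epsilon}.
\]
Combining these estimates gives the uniform bound $|f_{s}(\lambda)|^{2}\le 1+\lambda^{\epsilon}$ for all $\lambda\ge 0$ and all $s\in(0,\epsilon/2]$.

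Finally, I would use the assumption $u\in H^{\epsilon}(V)$, which by definition means
\[
\int_{[0,\infty)}(1+\lambda)^{\epsilon}\,dE_{u,u}(\lambda)<\infty,
\]
so in particular $1+\lambda^{\epsilon}\in L^{1}(dE_{u,u})$. An application of the dominated convergence theorem to the scalar measure $dE_{u,u}$ then yields
\[
\int_{[0,\infty)}|f_{s}(\lambda)|^{2}\,dE_{u,u}(\lambda)\longrightarrow 0
\quad\text{as }s\to 0^{+},
\]
which is the desired conclusion. The whole argument is essentially a one–line application of dominated convergence once one notices that the correction $E(\{0\})u$ is precisely what kills the unavoidable jump of $\lambda^{s}$ at $\lambda=0$, and that the Sobolev regularity $H^{\epsilon}$ provides exactly the integrability needed to dominate $\lambda^{s}$ at infinity.
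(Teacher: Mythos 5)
Your proof is correct and follows exactly the route the paper indicates: the paper defers this result to the functional-calculus argument in \cite{chen2025logarithmic}, and your spectral-theorem reduction to the scalar integrand $f_s(\lambda)=\lambda^{s}-1+\mathbf 1_{\{0\}}(\lambda)$, with the uniform bound $|f_s(\lambda)|^2\le 1+\lambda^{\epsilon}$ for $s\le\epsilon/2$ and dominated convergence against the finite measure $dE_{u,u}$, is precisely that argument. No gaps.
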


\begin{proposition}\label{slx2}
	For every \(u\in H^2(V)\), 
	\[
	\lim_{s\to1^{-}}\bigl\|(-\Delta)^{s}u + \Delta \: u\bigr\|_{\ell^{2}(V,\mu)} = 0.
	\]
\end{proposition}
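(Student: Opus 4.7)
The plan is to reduce the claim to a dominated–convergence argument in the spectral representation of $-\Delta$ and then let $s\uparrow 1$ under the integral sign. Since $u\in H^{2}(V)=\operatorname{Dom}(-\Delta)$, the spectral theorem for the positive self-adjoint operator $-\Delta$ gives
\[
\bigl\|(-\Delta)^{s}u+\Delta u\bigr\|_{\ell^{2}(V,\mu)}^{2}
=\bigl\|(-\Delta)^{s}u-(-\Delta)u\bigr\|_{\ell^{2}(V,\mu)}^{2}
=\int_{[0,\infty)}\bigl|\lambda^{s}-\lambda\bigr|^{2}\,dE_{u,u}(\lambda),
\]
where $E_{u,u}$ is the non-negative scalar spectral measure associated with $u$. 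First I would record the two facts used throughout: pointwise convergence $\lambda^{s}\to\lambda$ for every $\lambda\in[0,\infty)$ as $s\to1^{-}$, and the finiteness
\[
\int_{[0,\infty)}(1+\lambda^{2})\,dE_{u,u}(\lambda)
=\|u\|_{\ell^{2}(V,\mu)}^{2}+\|(-\Delta)u\|_{\ell^{2}(V,\mu)}^{2}<\infty,
\]
which is precisely the condition encoding $u\in H^{2}(V)$.

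Next I would construct an $s$-uniform envelope. Restricting to $s\in[\tfrac12,1]$, I split $[0,\infty)=[0,1]\cup(1,\infty)$: on $[0,1]$ we have $\lambda^{s}\le 1$, so $|\lambda^{s}-\lambda|\le 2$; on $(1,\infty)$ we have $\lambda^{s}\le\lambda$, so $|\lambda^{s}-\lambda|\le 2\lambda$. Hence uniformly in $s\in[\tfrac12,1]$,
\[
\bigl|\lambda^{s}-\lambda\bigr|^{2}\;\le\;4\bigl(1+\lambda^{2}\bigr),\qquad \lambda\ge 0,
\]
and by the previous display the right-hand side is $E_{u,u}$-integrable.

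With the envelope in hand, the dominated convergence theorem applied to the measure $E_{u,u}$ yields
\[
\lim_{s\to 1^{-}}\int_{[0,\infty)}\bigl|\lambda^{s}-\lambda\bigr|^{2}\,dE_{u,u}(\lambda)=0,
\]
which, combined with the first display, gives the claimed $\ell^{2}$-convergence. The argument is essentially mechanical; the only step requiring a little care is verifying that the envelope $4(1+\lambda^{2})$ is indeed integrable against $E_{u,u}$, and this is exactly the reason the hypothesis is phrased as $u\in H^{2}(V)$ rather than merely $u\in\operatorname{Dom}((-\Delta)^{s})$ for some $s<1$. No further structural information about the graph, stochastic completeness, or heat kernel bounds is needed, since everything is carried out inside the abstract spectral calculus already set up in Section~2.2.
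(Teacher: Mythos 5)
Your proof is correct and coincides with the paper's approach: the paper proves Proposition~\ref{slx2} by exactly this functional-calculus argument (deferring the details to the cited reference), namely writing the squared norm as $\int_{[0,\infty)}|\lambda^{s}-\lambda|^{2}\,dE_{u,u}(\lambda)$ and applying dominated convergence with an envelope controlled by $1+\lambda^{2}$, which is $E_{u,u}$-integrable precisely because $u\in H^{2}(V)=\operatorname{Dom}(-\Delta)$. Your uniform bound $|\lambda^{s}-\lambda|^{2}\le 4(1+\lambda^{2})$ for $s\in[\tfrac12,1]$ is valid, so nothing is missing.
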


\begin{proposition}\label{slx3}
	If \(u\in H^\varepsilon(V)\cap H^{\log}(V)\), then
	\[
	\frac{(-\Delta)^s - I + E(\{0\})}{s}\,u
	\xrightarrow{s\to0^+}
	\log(-\Delta)\,u
	\quad\text{in }\ell^2(V,\mu),
	\]
	where $H^{\log}(V)$ is defined in Proposition \ref{hlog}.
\end{proposition}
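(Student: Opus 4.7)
The plan is to convert the stated $\ell^{2}$-convergence into a scalar convergence statement under the projection-valued measure $E$ of $-\Delta$, and then verify it by dominated convergence. Introducing the multiplier
\[
m_{s}(\lambda) := \frac{\lambda^{s} - 1 + \mathbf{1}_{\{0\}}(\lambda)}{s}, \qquad s \in (0,1),\ \lambda \in [0,\infty),
\]
the operator on the left-hand side equals $m_{s}(-\Delta)$, while $\log(-\Delta)$ corresponds to the multiplier $\phi(\lambda) := \log(\lambda)\,\mathbf{1}_{(0,\infty)}(\lambda)$. The spectral theorem then yields
\[
\left\| \tfrac{(-\Delta)^{s} - I + E(\{0\})}{s}\,u - \log(-\Delta)\,u \right\|_{\ell^{2}(V,\mu)}^{2} = \int_{[0,\infty)} \bigl|m_{s}(\lambda) - \phi(\lambda)\bigr|^{2}\, dE_{u,u}(\lambda).
\]
At $\lambda = 0$ both multipliers vanish ($m_{s}(0)=0$ under the convention $0^{s}=0$, and $\phi(0)=0$), so the point mass at $0$ does not contribute, and the task reduces to showing the integral over $(0,\infty)$ tends to $0$ as $s \to 0^{+}$.

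On $(0,\infty)$ the pointwise convergence $\frac{\lambda^{s}-1}{s} \to \log\lambda$ is elementary, so the heart of the argument is to construct a single $s$-uniform dominant that is integrable against $dE_{u,u}$. By the mean value theorem applied to $t \mapsto \lambda^{t}$, for each $\lambda>0$ and $s \in (0,s_{0}]$ there exists $\theta = \theta(s,\lambda) \in (0,s)$ with
\[
\frac{\lambda^{s}-1}{s} = \lambda^{\theta}\log\lambda,
\]
so that $\lambda^{\theta} \le 1$ when $\lambda \in (0,1]$ and $\lambda^{\theta} \le \lambda^{s_{0}}$ when $\lambda \ge 1$. Combined with the elementary bound $|\log\lambda|^{2} \le C_{\eta}\,\lambda^{\eta}$ for $\lambda \ge 1$ and any $\eta>0$, choosing $s_{0}$ and $\eta$ so that $2s_{0}+\eta \le \varepsilon$ produces the dominator
\[
g(\lambda) := C\bigl(|\log\lambda|^{2}\,\mathbf{1}_{(0,1]}(\lambda) + \lambda^{\varepsilon}\,\mathbf{1}_{(1,\infty)}(\lambda)\bigr)
\]
for the full integrand $|m_{s}-\phi|^{2}$. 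Integrability of $g$ against $dE_{u,u}$ follows from the two hypotheses: $u \in H^{\log}(V)$ gives $\int |\log\lambda|^{2}\,dE_{u,u} = \|\log(-\Delta)u\|_{\ell^{2}}^{2} < \infty$, and $u \in H^{\varepsilon}(V)$ gives $\int \lambda^{\varepsilon}\,dE_{u,u} = \|(-\Delta)^{\varepsilon/2}u\|_{\ell^{2}}^{2} < \infty$.

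Dominated convergence then finishes the proof. The main obstacle is precisely the construction of this single dominating function: one must simultaneously tame the logarithmic blow-up of $\frac{\lambda^{s}-1}{s}$ as $\lambda\to 0^{+}$ (which forces the $H^{\log}$ hypothesis, since otherwise even the limit $\log(-\Delta)u$ fails to lie in $\ell^{2}$) and the polynomial growth $\lambda^{s_{0}}\log\lambda$ as $\lambda\to\infty$ (which forces the $H^{\varepsilon}$ hypothesis whenever $\sigma(-\Delta)$ is unbounded). Beyond this balancing act, the argument is standard spectral calculus.
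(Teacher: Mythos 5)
Your proof is correct: the reduction to the scalar integral $\int_{(0,\infty)}\bigl|\tfrac{\lambda^{s}-1}{s}-\log\lambda\bigr|^{2}\,dE_{u,u}(\lambda)$, the mean-value bound $\tfrac{\lambda^{s}-1}{s}=\lambda^{\theta}\log\lambda$, and the $s$-uniform dominating function whose integrability is exactly what the hypotheses $u\in H^{\varepsilon}(V)$ (controlling $\lambda^{\varepsilon}$ at infinity) and $u\in H^{\log}(V)$ (controlling $|\log\lambda|^{2}$ near $0$) provide, together with dominated convergence, give a complete argument. This is precisely the functional-calculus route the paper invokes (it defers the details to the cited reference), so your proposal matches the paper's approach.
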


Since the vertex set \(V\) is discrete, convergence in \(\ell^2(V,\mu)\) implies convergence in \(\ell^\infty(V)\), and hence the above limits also hold pointwise at each \(x\in V\).

\section{Logarithmic Laplacian on Graphs}

In this section, we use functional calculus to derive explicit formulas for both the fractional Laplacian and the logarithmic Laplacian in the graph setting.  We begin by recalling the spectral definition of $(-\Delta)^s$ on finite graphs and infinite weighted graphs.  Next, we define the logarithmic Laplacian on finite graphs by functional calculus, and on infinite graphs we give a Bochner‐integral representation of the logarithmic Laplacian.  To our knowledge, this is the first rigorous construction of the logarithmic Laplacian on infinite graphs.

\subsection{Fractional Laplacian on Finite and Infinite Graphs}

Since on a finite connected graph \(G=(V,E,\mu,w)\), the Laplacian \(-\Delta\) is a finite dimensional self–adjoint operator, its spectrum consists of eigenvalues
\[
0=\lambda_{0}<\lambda_{1}\le\lambda_{2}\le\cdots\le\lambda_{N-1},
\]
with corresponding orthonormal eigenfunctions \(\{\varphi_{j}\}_{j=0}^{N-1}\) in \(\ell^2(V,\mu)\), where \(N = |V|\) is the number of vertices of \(G\). The spectral measure \(E(\lambda)\) is then a projection measure supported on \(\{\lambda_j\}\), and
\[
(-\Delta)^{s}
=\int_{[0,\infty)}\lambda^{s}\,dE(\lambda)
=\sum_{j=0}^{N-1}\lambda_j^{s}\,E(\{\lambda_j\}).
\]
Hence for any \(u\in\ell^2(V,\mu)\) and \(x\in V\),
\[
(-\Delta)^{s}u(x)
=\sum_{j=0}^{N-1}\lambda_j^{s}\,\langle u,\varphi_j\rangle\,\varphi_j(x).
\]

We now consider the more general setting of infinite weighted graphs $G=(V,E,w,\mu)$ for which the minimal Laplacian $\Delta_{\mathrm{min}}$ is essentially self‐adjoint.

Let 
\[
\varphi_t(\lambda)=e^{-t\lambda},\quad t>0,
\]
and define the heat semigroup by functional calculus as
\[
e^{t\Delta}:=\varphi_t(-\Delta),
\]
which is a strongly continuous contraction on \(\ell^2(V,\mu)\) with generator \(-\Delta\).  Equivalently, for each \(u\in\mathrm{Dom}(-\Delta)\), the function \(u_t=e^{t\Delta}u\) is the unique solution of
\[
\begin{cases}
	\partial_t u_t = \Delta u_t, & t>0,\\
	u_0 = u.
\end{cases}
\]

It is well known that the solution \(u_t = e^{t\Delta}u\) admits the following kernel representation:
\[
e^{t\Delta}u(x)
\;=\;
\sum_{y\in V}p(t,x,y)\,u(y)\,m(y),
\quad x\in V,\;t\ge0,
\]
where \(p(t,x,y)\) is the associated heat kernel. A straightforward computation shows that
\[
p(t,x,y)
\;=\;
\frac{1}{m(x)\,m(y)}\,
\left\langle \mathbf{1}_{\{x\}},\,e^{t\Delta}\mathbf{1}_{\{y\}} \right\rangle_{\ell^2(V,\mu)},
\]
and \(\mathbf{1}_{\{x\}}\) denotes the indicator function at the vertex \(x\).

For \(0<s<1\), one uses the scalar identity
\[
\lambda^s
=\frac{s}{\Gamma(1-s)}
\int_0^\infty \bigl(1 - e^{-t\lambda}\bigr)\,t^{-1-s}\,dt,
\quad \lambda\ge0,
\]
and applies functional calculus to \(-\Delta\), the following Proposition \ref{fenshubo} holds, detailed proofs can be found in \cite[Proposition 2.10]{chen2025logarithmic}:

	\begin{proposition}\label{fenshubo}
	Let $0<s<1$, for each $u\in H^{2s}(V)$ and $v\in \ell^2(V,\mu)$, define
	\[
	A
	:=\int_{\sigma(-\Delta)}\!\int_{0}^{\infty}(1-e^{-t\lambda})\,t^{-1-s}\,dt\;dE(\lambda)\,,
	\]
	\[
	B 
	:=\int_{0}^{\infty}\!\int_{\sigma(-\Delta)}(1-e^{-t\lambda})\,dE(\lambda)\;\,t^{-1-s}dt\;.
	\]
	Then for all $f,g$
	\[
	\bigl\langle A u,\,v\bigr\rangle_{\ell^2}
	=\bigl\langle B u,\,v\bigr\rangle_{\ell^2},
	\]
	and hence $A=B$ as operators on $H^{2s}(V)$ in the strong operator topology.
\end{proposition}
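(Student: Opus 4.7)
The plan is to reduce the operator identity $A=B$ to a scalar identity by pairing with $v\in\ell^2(V,\mu)$ and then justifying an application of Fubini's theorem on the product space $\sigma(-\Delta)\times(0,\infty)$. Using the functional calculus, for any $u\in H^{2s}(V)$ and $v\in\ell^2(V,\mu)$ we have
\[
\bigl\langle Au,v\bigr\rangle_{\ell^2}
=\int_{\sigma(-\Delta)}\Bigl(\int_0^\infty(1-e^{-t\lambda})\,t^{-1-s}\,dt\Bigr)\,dE_{u,v}(\lambda),
\]
\[
\bigl\langle Bu,v\bigr\rangle_{\ell^2}
=\int_0^\infty\Bigl(\int_{\sigma(-\Delta)}(1-e^{-t\lambda})\,dE_{u,v}(\lambda)\Bigr)\,t^{-1-s}\,dt.
\]
Thus the task is precisely to justify interchanging the order of integration in an iterated complex-measure integral.

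The key quantitative step is the elementary bound $|1-e^{-t\lambda}|\le\min(1,t\lambda)$ for $\lambda\ge 0$, $t>0$. Splitting the $t$-integral at $t=1/\lambda$ and estimating each piece separately gives
\[
\int_0^\infty|1-e^{-t\lambda}|\,t^{-1-s}\,dt
\;\le\;\int_0^{1/\lambda}t\lambda\cdot t^{-1-s}\,dt+\int_{1/\lambda}^\infty t^{-1-s}\,dt
\;=\;\frac{\lambda^s}{s(1-s)}.
\]
In other words, the inner $t$-integral is bounded pointwise by $C_s\lambda^s$, which is exactly the growth controlled by the hypothesis $u\in H^{2s}(V)$.

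To upgrade this to integrability against the complex spectral measure $E_{u,v}$, I would invoke the standard polarization/Cauchy–Schwarz estimate for projection-valued measures, namely
\[
\int_{\sigma(-\Delta)}|\varphi(\lambda)|\,d|E_{u,v}|(\lambda)
\;\le\;\Bigl(\int_{\sigma(-\Delta)}|\varphi(\lambda)|^2\,dE_{u,u}(\lambda)\Bigr)^{1/2}\|v\|_{\ell^2}.
\]
Applied with $\varphi(\lambda)=\lambda^s$ this gives a bound by $\|(-\Delta)^s u\|_{\ell^2}\|v\|_{\ell^2}<\infty$, and combining with the previous $t$-estimate shows that the nonnegative integrand $|1-e^{-t\lambda}|\,t^{-1-s}$ is integrable on the product space against $d|E_{u,v}|(\lambda)\otimes dt$. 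Fubini–Tonelli then legitimates the swap, so $\langle Au,v\rangle=\langle Bu,v\rangle$ for all $v\in\ell^2(V,\mu)$, which forces $Au=Bu$ strongly on $H^{2s}(V)$.

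The main obstacle, and the reason the proposition requires care, is the handling of the complex, non–positive measure $E_{u,v}$: one cannot directly apply Tonelli, and must pass through the total-variation $|E_{u,v}|$ together with the polarization inequality above. Once the uniform bound $\int_0^\infty|1-e^{-t\lambda}|\,t^{-1-s}\,dt\lesssim \lambda^s$ is combined with the $H^{2s}$-hypothesis, the remainder is a routine verification.
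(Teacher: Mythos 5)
Your proof is correct. Note that the paper itself does not reproduce a proof of Proposition \ref{fenshubo} but defers it to \cite[Proposition 2.10]{chen2025logarithmic}; the argument you give—pairing with $v$, the bound $|1-e^{-t\lambda}|\le\min(1,t\lambda)$ yielding $\int_0^\infty|1-e^{-t\lambda}|\,t^{-1-s}\,dt\le \lambda^s/(s(1-s))$, the total-variation/Cauchy--Schwarz estimate $\int|\varphi|\,d|E_{u,v}|\le\|\varphi(-\Delta)u\|\,\|v\|$, and Fubini--Tonelli on $|E_{u,v}|\otimes dt$—is precisely the standard route such a proof takes, and each step checks out under the hypothesis $u\in H^{2s}(V)$.
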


Hence, we obtain the well–known Bochner integral representation of the fractional Laplacian: for every \(u\in H^{2s}(V)\) and \(0<s<1\),  
\begin{equation}\label{bochfrac}
	(-\Delta)^{s}u=\frac{s}{\Gamma(1-s)}
	\int_{0}^{\infty}\bigl(u-e^{t\Delta}u\bigr)\,t^{-1-s}\,dt,u\in H^{2s}(V)
\end{equation}

M. Zhang \cite{zhang2024fractional1} established an pointwise description of the fractional Laplacian on stochastically complete locally finite weight  graphs by Bochner formula (\ref{bochfrac}), which extends domain $H^{2s}\left(V\right)$ to all \(u\in \ell^\infty(V)\).  Specifically, for any \( u \in \ell^\infty(V) \), the fractional Laplacian defined in \eqref{bochfrac} admits the pointwise representation
\begin{equation} \label{eq:kernel-form}
	(-\Delta)^s u(x) = \frac{1}{\mu(x)} \sum_{\substack{y \in V , y \neq x}} W_s(x, y)(u(x) - u(y)),
\end{equation}
where the fractional weight \( W_s(x, y) \) is given by
\begin{equation} \label{eq:weight}
	W_s(x, y) = \frac{s}{\Gamma(1 - s)}\mu(x)\mu(y) \int_0^{+\infty} p(t, x, y) t^{-1-s} \, dt, \quad \forall \:\:x\ne y \in V.
\end{equation}
 The weight function \( W_s(x, y) \) is symmetric and positive, i.e., \( W_s(x, y) = W_s(y, x) > 0 \).

\subsection{Logarithmic Laplacian on Finite and Infinite Graphs}
We begin by recalling some fundamentals of functional calculus. By the spectral theorem, for each \(s\in (0,1)\), the fractional Laplacian  of the positive  operator \(-\Delta\) is
\[
(-\Delta)^{s}u
=\int_{[0,\infty)}\lambda^{s}\,dE(\lambda)\,u,\quad u\in H^{2s}\left(V\right)
\]
and
\[
(-\Delta)^{s}u - u+E\left(\left\{0\right\}\right)u
=\int_{0}^{\infty}\bigl(\lambda^{s}-1\bigr)\,dE(\lambda)\,u.
\]
By Proposition \ref{slx1}
and note that
\[
\frac{\lambda^{s}-1}{s}
\;\xrightarrow{s\to0^+}\;\log \lambda,\:\lambda>0,
\]
it is natural to define the logarithmic Laplacian by
\begin{equation}\label{scja}
	\log(-\Delta)
	:=\int_{0}^{\infty}\log\lambda\;dE(\lambda).
\end{equation}
By spectral calculus,
\[
\operatorname{Dom}\bigl(\log(-\Delta)\bigr)=
\Bigl\{\,u\in\ell^{2}(V,\mu)\;\Big|\;
\int_{0}^{\infty}(\log\lambda)^{2}\,dE_{u,u}(\lambda)<\infty
\Bigr\}.
\]

\begin{proposition}\cite[Proposition 2.2]{chen2025logarithmic}\label{hlog}
	Equipped with the inner product
	\[
	\langle u,v\rangle_{\log}
	:=\langle u,v\rangle_{\ell^{2}}
	+\bigl\langle\log(-\Delta)u,\;\log(-\Delta)v\bigr\rangle_{\ell^{2}},
	\]
	the space
	\[
	\operatorname{Dom}\bigl(\log(-\Delta)\bigr)
	=\Bigl\{u\in\ell^{2}(V,\mu)\;\Big|\;\int_{0}^{\infty}(\log\lambda)^{2}\,dE_{u,u}(\lambda)<\infty\Bigr\}:=H^{\log}(V)
	\]
	is a Hilbert space in $\ell^2\left(V,\mu\right).$
\end{proposition}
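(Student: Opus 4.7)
The plan is to reduce the claim to the standard principle that, for any densely defined closed operator $T$ on a Hilbert space, its domain $\operatorname{Dom}(T)$ equipped with the graph inner product $\langle u,v\rangle_T:=\langle u,v\rangle+\langle Tu,Tv\rangle$ is itself a Hilbert space. In our setting, $T=\log(-\Delta)$ is obtained by Borel functional calculus from the positive self-adjoint operator $-\Delta$ applied to the real-valued Borel function $\lambda\mapsto\log\lambda$; by the spectral theorem this yields a self-adjoint, and therefore closed, operator on the natural spectral domain $H^{\log}(V)$, with the functional-calculus identity $\|\log(-\Delta)u\|_{\ell^2}^2=\int_0^\infty(\log\lambda)^2\,dE_{u,u}(\lambda)$.

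First, I would check that $\langle\cdot,\cdot\rangle_{\log}$ is an inner product on $H^{\log}(V)$. Bilinearity and symmetry are immediate from the linearity of $\log(-\Delta)$ and the symmetry of $\langle\cdot,\cdot\rangle_{\ell^2}$, while positive-definiteness follows from the trivial bound $\|u\|_{\log}^2\ge \|u\|_{\ell^2}^2$, so that $\langle u,u\rangle_{\log}=0$ forces $u=0$.

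The substantive step is completeness. Given a sequence $\{u_n\}\subset H^{\log}(V)$ Cauchy in $\|\cdot\|_{\log}$, the decomposition
\[
\|u_n-u_m\|_{\log}^2=\|u_n-u_m\|_{\ell^2}^2+\|\log(-\Delta)(u_n-u_m)\|_{\ell^2}^2
\]
shows that both $\{u_n\}$ and $\{\log(-\Delta)u_n\}$ are Cauchy in $\ell^2(V,\mu)$. By completeness of $\ell^2(V,\mu)$ there exist $u,v\in\ell^2(V,\mu)$ with $u_n\to u$ and $\log(-\Delta)u_n\to v$. The closedness of $\log(-\Delta)$ then forces $u\in H^{\log}(V)$ and $\log(-\Delta)u=v$, whence $\|u_n-u\|_{\log}\to 0$.

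The only minor subtlety I anticipate is the handling of $\lambda=0$, where $\log\lambda$ is singular. I would either interpret $\int_0^\infty$ as $\int_{(0,\infty)}$ and observe that the integrability condition defining $H^{\log}(V)$ automatically forces $E(\{0\})u=0$, or restrict to the invariant subspace $\operatorname{Ran}(I-E(\{0\}))$ before applying the spectral calculus. With this bookkeeping in place, the argument is entirely routine, and no essential obstacle arises beyond invoking the standard self-adjointness of Borel functions of self-adjoint operators.
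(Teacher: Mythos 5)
Your argument is correct and is the standard one: the paper itself does not reproduce a proof but cites \cite[Proposition 2.2]{chen2025logarithmic}, and the expected argument there is exactly what you give — $\log(-\Delta)$ is self-adjoint (hence closed) by the Borel functional calculus, and the domain of a closed operator is complete under the graph norm. Your remark about the atom at $\lambda=0$ is the right bookkeeping and consistent with the paper's convention, since the condition $\int_{0}^{\infty}(\log\lambda)^{2}\,dE_{u,u}(\lambda)<\infty$ is naturally read on $(0,\infty)$ with $\log(-\Delta)$ annihilating $\operatorname{Ran}\,E(\{0\})$.
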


The following Bochner integral  for the logarithmic Laplacian holds:

\begin{theorem}\cite[Theorem 2.12]{chen2025logarithmic}\label{bochlog}
	For every \(u\in H^{\log}(V)\),
	\[
	\log(-\Delta)\,u
	=\int_{0}^{\infty}\frac{e^{-t}u - e^{t\Delta}u}{t}\,dt,
	\]
	where the Bochner integral converges in \(\ell^2(V,\mu)\).
\end{theorem}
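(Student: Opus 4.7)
The plan is to reduce the statement to the scalar Frullani identity
$$\log \lambda = \int_0^\infty \frac{e^{-t} - e^{-\lambda t}}{t}\,dt, \quad \lambda > 0,$$
and transfer it through the spectral resolution of $-\Delta$, in the same spirit as the proof of Proposition \ref{fenshubo} for the fractional Laplacian. I would introduce the truncations
$$\varphi_{\epsilon,T}(\lambda) := \int_\epsilon^T \frac{e^{-t} - e^{-\lambda t}}{t}\,dt, \quad 0 < \epsilon < T < \infty,$$
with their functional-calculus realizations $\varphi_{\epsilon,T}(-\Delta)$, together with the truncated Bochner integrals $A_{\epsilon,T}u := \int_\epsilon^T (e^{-t}u - e^{t\Delta}u)/t\,dt$. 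The latter are honest Bochner integrals since their integrand is continuous and bounded on $[\epsilon, T]$ by strong continuity of the heat semigroup.

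The first step is to show $A_{\epsilon,T}u = \varphi_{\epsilon,T}(-\Delta)u$. For any $v \in \ell^2(V,\mu)$, pairing against $v$ and expanding via functional calculus yields
$$\langle A_{\epsilon,T}u,\,v\rangle = \int_\epsilon^T \int_{[0,\infty)} \frac{e^{-t} - e^{-\lambda t}}{t}\,dE_{u,v}(\lambda)\,dt.$$
On $[\epsilon, T] \times [0, \infty)$ the integrand is uniformly bounded, and the complex spectral measure $E_{u,v}$ has total variation at most $\|u\|\|v\|$, so Fubini applies and, after swapping the order of integration, produces $\int \varphi_{\epsilon,T}(\lambda)\,dE_{u,v}(\lambda) = \langle \varphi_{\epsilon,T}(-\Delta)u, v\rangle$. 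Since $v \in \ell^2$ is arbitrary, $A_{\epsilon,T}u = \varphi_{\epsilon,T}(-\Delta)u$.

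The second step is to pass to the limit $\epsilon \to 0^+$ and $T \to \infty$. The Frullani identity gives $\varphi_{\epsilon,T}(\lambda) \to \log \lambda$ pointwise on $(0,\infty)$, and the uniform bound
$$|\varphi_{\epsilon,T}(\lambda)| \le \int_0^\infty \frac{|e^{-t} - e^{-\lambda t}|}{t}\,dt = |\log \lambda|$$
(a direct computation separating the cases $\lambda > 1$ and $\lambda < 1$) furnishes a dominating function in $L^2(dE_{u,u})$ precisely because $u \in H^{\log}(V)$, which also forces $E_{u,u}(\{0\}) = 0$. Dominated convergence in $L^2(dE_{u,u})$ therefore gives $\varphi_{\epsilon,T}(-\Delta)u \to \log(-\Delta)u$ in $\ell^2(V,\mu)$, and hence $A_{\epsilon,T}u \to \log(-\Delta)u$ in $\ell^2$, which is the required convergence of the Bochner integral. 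The main obstacle is securing a dominant in $L^2(dE_{u,u})$: the sharp bound $|\varphi_{\epsilon,T}(\lambda)| \le |\log \lambda|$ is exactly what makes $H^{\log}$ the right space, while weaker bounds would force stronger regularity on $u$. One should also note that full Bochner-integrability of $\|(e^{-t}u - e^{t\Delta}u)/t\|_{\ell^2}$ over all of $(0,\infty)$ need not hold for a general $u \in H^{\log}$, which is why the argument must proceed by truncation and passage to the limit rather than by a single integrability estimate.
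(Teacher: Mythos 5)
Your proposal is correct, and it follows essentially the approach the paper relies on: the paper itself defers the proof of Theorem \ref{bochlog} to the cited reference, but your truncate--Fubini--spectral-theorem argument is exactly the strategy the paper carries out for the fractional case in Proposition \ref{fenshubo}, upgraded by the sharp dominant $|\varphi_{\epsilon,T}(\lambda)|\le|\log\lambda|$ that makes $H^{\log}(V)$ the natural domain. Your closing caveat --- that absolute Bochner integrability of $t\mapsto\|(e^{-t}u-e^{t\Delta}u)/t\|_{\ell^2}$ can fail for general $u\in H^{\log}(V)$, so the integral must be read as an improper limit of truncations --- is a genuine and correctly identified subtlety.
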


Below, we derive the expressions for the logarithmic Laplacian on finite and infinite graphs, respectively, using functional calculus and the Bochner integral formula.

Let \(G=(V,E,\mu,w)\) be a finite, connected weighted graph with \(|V|=N\), and let
\[
0=\lambda_{0}<\lambda_{1}\le\cdots\le\lambda_{N-1}
\]
be the eigenvalues of \(-\Delta\), with orthonormal eigenfunctions \(\{\varphi_{j}\}_{j=0}^{N-1}\) in \(\ell^2(V,\mu)\).  The projection onto the zero‐eigenspace is
\[
E(\{0\})u \;=\;\langle u,\varphi_{0}\rangle\,\varphi_{0},
\qquad \varphi_{0}(x)=\frac{1}{\sqrt{\sum_{z}\mu(z)}}.
\]

Since $-\Delta$ is self–adjoint with purely discrete spectrum  on finite graphs, and $E(\{\lambda_j\})$ denotes the corresponding orthogonal projections, it follows from \eqref{scja} and the spectral theorem that
\[
\log(-\Delta)\,u
=\int_{0}^{\infty}\log\lambda\,dE(\lambda)\,u
=\sum_{j=1}^{N-1}\log\lambda_{j}\,\langle u,\varphi_{j}\rangle\,\varphi_{j}.
\]

In what follows, let $G=(V,E,\mu,w)$ be an infinite, connected, stochastically complete weighted graph. We will then derive a pointwise formula for the logarithmic Laplacian on $G$. To arrive at the pointwise formula, we begin by stating a lemma that will be invoked in the proof of Theorem \ref{pointlog11}, which can be found in \cite{chen2025logarithmic}.

\begin{lemma}\label{euler}
	The following identity holds:
	\[
	\int_{0}^{1}\frac{e^{-t}-1}{t}\,dt
	+\int_{1}^{\infty}\frac{e^{-t}}{t}\,dt
	=-\gamma,
	\]
	where \(\gamma:=-\Gamma^{\prime}(1)\) is the Euler–Mascheroni constant.
\end{lemma}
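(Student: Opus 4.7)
The plan is to identify the left‑hand side with $\Gamma'(1)$ via integration by parts, at which point the identity $\Gamma'(1)=-\gamma$ (which is taken here as the very definition of $\gamma$) closes the argument. Concretely, differentiating $\Gamma(s)=\int_{0}^{\infty}t^{s-1}e^{-t}\,dt$ at $s=1$ yields $\Gamma'(1)=\int_{0}^{\infty}(\log t)\,e^{-t}\,dt$, so it suffices to establish
\begin{equation*}
\int_{0}^{\infty}(\log t)\,e^{-t}\,dt
=\int_{0}^{1}\frac{e^{-t}-1}{t}\,dt+\int_{1}^{\infty}\frac{e^{-t}}{t}\,dt.
\end{equation*}

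I would split the integral at $t=1$ and integrate each piece by parts, choosing the antiderivative of $e^{-t}$ differently on each interval. On $[1,\infty)$ the usual choice $v=-e^{-t}$ works: the boundary term $[-e^{-t}\log t]_{1}^{\infty}$ vanishes at both endpoints (exponential decay at infinity, $\log 1=0$), and what remains is precisely $\int_{1}^{\infty}e^{-t}/t\,dt$.

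On $(0,1]$ the same choice would fail because $\log t$ diverges at the origin; the trick is to use the shifted antiderivative $v=1-e^{-t}$, which vanishes to first order at $0$. Then $[(1-e^{-t})\log t]_{0}^{1}=0$, since $1-e^{-t}\sim t$ and $t\log t\to 0$ as $t\to 0^{+}$, and the remaining piece is $-\int_{0}^{1}(1-e^{-t})/t\,dt=\int_{0}^{1}(e^{-t}-1)/t\,dt$. Summing the two contributions gives exactly the displayed identity, and hence the lemma.

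The only genuinely delicate point is the choice of antiderivative on $(0,1]$: the naive integration by parts produces divergent boundary data, and replacing $v=-e^{-t}$ by $v=1-e^{-t}$ is what absorbs the logarithmic singularity. Everything else is routine, and the differentiation under the integral sign at $s=1$ that defines $\Gamma'(1)$ is standard and justified by dominated convergence on any compact interval of $s$ around $1$.
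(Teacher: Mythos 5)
Your proof is correct. Note that the paper itself does not prove this lemma---it is stated and deferred to the reference \cite{chen2025logarithmic}---so there is no in-paper argument to compare against; your derivation via $\Gamma'(1)=\int_0^\infty (\log t)\,e^{-t}\,dt$, splitting at $t=1$ and integrating by parts with the shifted antiderivative $v=1-e^{-t}$ on $(0,1]$ to kill the boundary term, is the standard route and is complete. The one genuinely delicate point (absorbing the logarithmic singularity at $0$ by the choice of antiderivative) is exactly the one you identify and handle correctly, and the justification of differentiation under the integral sign at $s=1$ by domination on a compact $s$-interval is adequate as stated.
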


Next, we give the proof of our first main result.
		
		\vspace{1\baselineskip}
\noindent \textbf{Proof of Theorem \ref{pointlog11}:}
	
	We use the Bochner formula in Theorem \ref{bochlog} and split at \(t=1\):
	\[
	\log(-\Delta)u
	=\int_{0}^{\infty}\frac{e^{-t}u - e^{t\Delta}u}{t}\,dt.
	\]
	\textbf{(1) Short times \(0<t<1\).}  Write
	\[
	e^{-t}u(x)-e^{t\Delta}u(x)
	=(e^{-t}-1)u(x)
	+\sum_{y\in V,y\ne x}\bigl(u(x)-u(y)\bigr)p(t,x,y)\,\mu(y),
	\]
	by using \(e^{t\Delta}u(x)=\sum_y p(t,x,y)u(y)\mu(y)\).  Hence
\begingroup\small		\[
	\int_0^1\frac{e^{-t}u(x)-e^{t\Delta}u(x)}{t}\,dt
	=\frac{1}{\mu(x)}\int_0^1 \sum_{y\neq x}
	\mu(x)\mu(y)\bigl(u(x)-u(y)\bigr) \tfrac{p(t,x,y)}{t}dt
	\;+\int_0^1\frac{e^{-t}-1}{t}\,dt u(x).
	\]\endgroup
	Note that
	\[\sum_{y\ne x}p\left(t,x,y\right)\mu(y)=1-p\left(t,x,x\right)\mu(x)\le \Bigl(\mu(x)\,\max_{t\in[0,1]}\bigl|\partial_{t}p(t,x,x)\bigr|\Bigr)\,t.\]
	Thus, since $u\in C_c\left(V\right),$ by the dominated convergence theorem,
	\[
	\int_0^1\frac{e^{-t}u(x)-e^{t\Delta}u(x)}{t}\,dt
	= \frac{1}{\mu(x)}\sum_{\substack{y\in V,y\neq x}}
	W_{\log}(x,y)\bigl(u(x)-u(y)\bigr)
	\;+\int_0^1\frac{e^{-t}-1}{t}\,dt u(x).
	\]
	
	\medskip\noindent
	\textbf{(2) Long times \(t\ge1\).}  
	Integrating over \([1,\infty)\) gives
	\[
	\int_1^\infty\frac{e^{-t}u-e^{t\Delta}u}{t}\,dt
	=\int_1^\infty\frac{e^{-t}}{t}\,dtu(x)-\frac{1}{\mu\left(x\right)}\int_1^\infty \sum_{y\in V}p(t,x,y)u(y)\mu(y)\mu(x)\frac{dt}{t} .
	\]
Since $u\in C_c\left(V\right),$ the sum is finite. Combining (1),\,(2) and Lemma \ref{euler}, we obtain
\begingroup\small	\[
\bigl(\log(-\Delta)u\bigr)(x)
=\frac{1}{\mu(x)}\sum_{\substack{y\in V,y\neq x}}
W_{\log}(x,y)\bigl(u(x)-u(y)\bigr)
\;-\;\frac{1}{\mu(x)}\sum_{y\in V}W(x,y)\,u(y)
\;+\;\Gamma'(1)\,u(x).
\] \endgroup
 \hfill $\square$

\section{Logarithmic Laplacian on Lattice Graphs}

In this section, we first recall key estimates for the heat kernel on weighted graphs and show how these bounds lead to precise control of the kernel functions for the logarithmic Laplacian.  We then derive a number of convergence and integrability results for both the fractional and logarithmic Laplacians.  Finally, we review the discrete Fourier transform on the integer lattice, identify the Fourier multipliers of the fractional and logarithmic operators, and use them to analyze asymptotics of the associated diffusion kernels.

\subsection{Heat Kernel Estimates on Graphs}\label{rehe}

Heat‐kernel bounds on Riemannian manifolds have a long history (see, e.g., \cite{davies1989heat,grigor2009heat,li1986parabolic,ondiag}).  On non-negative Ricci curvature manifolds, the Li–Yau estimate gives two‐sided Gaussian bounds
\[
\frac{C_l}{V(x,\sqrt t)}\exp\Bigl(-c_l\,\tfrac{d(x,y)^2}{t}\Bigr)
\;\le\;p(t,x,y)\;\le\;
\frac{C_r}{V(x,\sqrt t)}\exp\Bigl(-c_r\,\tfrac{d(x,y)^2}{t}\Bigr).
\]

	This line of work has spurred numerous advances in the graph‐theoretic setting.  For example, Bauer et al.\ \cite{bauer2015li} established a discrete Li–Yau inequality under the curvature condition $CDE(n,0)$ and derived corresponding heat‐kernel estimates.  Although their upper bounds exhibit Gaussian‐type decay, the lower bounds depend explicitly on the dimension parameter $n$ and fail to be genuinely Gaussian (see Theorem \ref{gau1}).  To overcome this, Horn et al.\ \cite{horn2019volume} introduced the strengthened curvature condition $CDE'(n,0)$ and proved two‐sided Gaussian bounds on graphs (see Theorem \ref{gaus1}).
	
	Subsequent work by Lin et al.\ \cite{lin2016global,lin2017gradient} obtained gradient estimates for positive solutions on graphs and used them to refine heat‐kernel bounds.  In parallel, Davies \cite{davies1993large} derived non‐Gaussian upper estimates in the continuous‐time graph setting via Legendre‐transform techniques.  Bauer et al.\ \cite{bauer2017sharp} later proved a sharp Davies–Gaffney–Grigor’yan lemma on graphs, recovering Davies’s bounds as an application, and Folz \cite{folz2011gaussian} used these tools to establish long‐range weak Gaussian estimates with respect to adapted metrics.
	
	Lower bounds, in particular, pose greater challenges.  A widely studied form is the on‐diagonal estimate
	\[
	p(t,x,x)\;\ge\;\frac{C_1}{V\bigl(x,C_2\sqrt t\bigr)},
	\]
	which has been treated in both manifold and graph contexts.  Delmotte \cite{delmotte1999parabolic} showed that this holds on graphs satisfying a continuous‐time parabolic Harnack inequality $\mathcal H(\eta,\theta_1,\dots,\theta_4,C)$.  Horn et al.\ \cite{horn2019volume} later recovered analogous lower bounds under $CDE'(n,0)$.  More recently, Lin et al.\ \cite{lin2017diagonal} proved that a purely volume‐growth hypothesis suffices to yield
	\[
	p(t,x,x)\;\ge\;\frac1{4\,V\!\bigl(x,\sqrt{Ct\log t}\bigr)}
	\quad\text{for all sufficiently large }t.
	\]
	
	Delmotte further established the equivalence
	\[
	\mathcal H(\eta,\theta_1,\dots,\theta_4,C_H)
	\;\Longleftrightarrow\;
	VD(C_1)+P(C_2)
	\;\Longleftrightarrow\;
	G(c_l,C_l,C_r,c_r)
	\]
	on graphs (with the manifold analogue due to Grigor’yan–Saloff‐Coste).  In the manifold case, nonnegative Ricci curvature implies both volume doubling and the Poincaré inequality.  In \cite{bauer2015li}, it was shown that $CDE(n,0)$ implies $\mathcal H$ (and hence all equivalent properties) provided $G$ admits a $(c,\eta R)$ strong cut‐off function in every ball of radius $R$ (for instance, $\mathbb Z^d$ admits such a function with parameter $1/\sqrt d$).
	
	Horn et al.\ \cite{horn2019volume} proved that $CDE'(n,0)$ alone suffices to guarantee volume doubling, a Poincaré inequality, Gaussian heat‐kernel bounds, and the continuous‐time Harnack inequality.  While the Harnack‐inequality approach in \cite{bauer2015li} yields a Gaussian upper bound for bounded‐degree graphs under $CDE(n,0)$, no matching lower bound arises because $CDE(n,0)$ does not by itself imply volume doubling (see Theorem \ref{gau1}).
	
	\begin{theorem}\label{gau1}
		Suppose $G$ satisfies $CDE(n,0)$ and has maximum degree $D$.  Then there are constants $C,C',C''>0$ such that for all $t>1$,
		\[
		C\,t^{-n}\exp\!\bigl(-C'\tfrac{d(x,y)^2}{t-1}\bigr)
		\;\le\;p(t,x,y)\mu(y)\;\le\;
		C''\,\frac{\mu(y)}{V\bigl(B(x,\sqrt t)\bigr)}.
		\]
	\end{theorem}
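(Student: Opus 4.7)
The plan is to derive both bounds via a graph-adapted Li--Yau gradient estimate followed by a parabolic Harnack argument, mirroring the classical strategy of Li--Yau on manifolds while compensating for the failure of the discrete chain rule for $\log$. The crucial point is that $CDE(n,0)$ is precisely the Bakry--\'Emery-type curvature-dimension condition tailored so that $\Delta(\sqrt{u})/\sqrt{u}$ admits a useful differential inequality, which substitutes for the manifold identity $\Delta\log u = \Delta u/u - |\nabla\log u|^2$.

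First, I would establish that under $CDE(n,0)$ together with the bounded-degree hypothesis (which guarantees the existence of strong cutoff functions on every ball, a prerequisite for localising the curvature-dimension inequality), every positive solution $u(t,\cdot)$ of $\partial_t u=\Delta u$ satisfies a Li--Yau-type bound of the form
\[
-\frac{\Delta\sqrt{u(t,x)}}{\sqrt{u(t,x)}} + \partial_t\log\sqrt{u(t,x)} \;\le\; \frac{n}{2t}.
\]
The argument proceeds by applying $\partial_t$ to the auxiliary quantity $F:=t\bigl(-\Delta\sqrt{u}/\sqrt{u}\bigr)$, invoking $CDE(n,0)$ at a maximum point, and running the parabolic maximum principle on a space-time cylinder. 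Next, I would integrate this pointwise estimate along space--time paths: for a shortest graph path $x=x_0\sim x_1\sim\cdots\sim x_k=y$ of length $k=d(x,y)$ and a partition $t_1=s_0<s_1<\cdots<s_k=t_2$, summing the differential inequalities along the chain and optimising the step sizes yields a parabolic Harnack inequality
\[
u(t_1,x)\;\le\;u(t_2,y)\,\Bigl(\tfrac{t_2}{t_1}\Bigr)^{n}\exp\!\Bigl(C'\,\tfrac{d(x,y)^2}{t_2-t_1}\Bigr),
\]
in which the $t^n$ factor reflects the time-integrated cost of the $n/(2t)$ term and the Gaussian exponential emerges from balancing the spatial and temporal increments exactly as in the original Li--Yau scheme.

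The stated lower bound then follows by applying this Harnack estimate with $u(t,\cdot)=p(t,x,\cdot)\mu(\cdot)$, $t_1=1$ and $t_2=t$, using the trivial uniform bound $p(1,x,x)\mu(x)\gtrsim 1$ (which holds on any bounded-degree graph, since the heat distribution cannot spread faster than the degree allows in a single unit of time); the factor $t^{-n}$ on the right of the theorem is exactly the reciprocal of $(t/1)^n$, and the shift $t-1$ in the exponent absorbs the unit of time consumed by this comparison. For the upper bound I would combine the same Harnack inequality with volume doubling (which on graphs is a known consequence of the parabolic Harnack inequality in the chain of equivalences $\mathcal H \Leftrightarrow VD+P \Leftrightarrow G$) and the mass-conservation identity $\sum_y p(t,x,y)\mu(y)\le 1$: averaging $p(t,x,\cdot)\mu(\cdot)$ over the ball $B(x,\sqrt t)$ through Harnack yields $p(t,x,y)\mu(y)\lesssim \mu(y)/V(B(x,\sqrt t))$, as required.

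The main obstacle will be the derivation of the Li--Yau gradient estimate itself. The classical manipulation $\Delta\log u = \Delta u/u - \Gamma(\log u)$ fails on graphs because the discrete Laplacian does not satisfy the chain rule, and $CDE(n,0)$ is engineered precisely so that working with $\sqrt{u}$ in place of $\log u$ still produces the key differential inequality $\Gamma_2(\sqrt{u})\ge \tfrac{1}{n}(\Delta\sqrt{u})^2$ after multiplication by $\sqrt{u}$. Making this substitution rigorous requires the bounded-degree hypothesis to dominate the discrete error terms that do not vanish on graphs, together with a delicate parabolic maximum-principle argument on truncated cylinders. A secondary difficulty is the asymmetry between the two bounds: because $CDE(n,0)$ alone does not force volume doubling, the lower bound is obtained only in the non-geometric form $t^{-n}$ rather than $V(x,\sqrt t)^{-1}$, which is exactly why strengthening to $CDE'(n,0)$ is needed to recover a genuinely Gaussian lower bound.
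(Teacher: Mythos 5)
First, note that Theorem \ref{gau1} is not proved in this paper at all: it is recalled verbatim from Bauer et al.\ \cite{bauer2015li} as background for the heat-kernel discussion, so there is no in-paper proof to compare against. Measured against the argument in the cited source, your Li--Yau $\to$ space-time Harnack $\to$ kernel bounds architecture is the right one, and your treatment of the lower bound (Harnack from $t_1=1$ to $t_2=t$ plus the on-diagonal bound $p(1,x,x)\mu(x)\ge e^{-1}$ from the no-jump event, which is where the shift $t-1$ and the non-geometric factor $t^{-n}$ come from) is essentially the standard proof.

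There is, however, a genuine gap in your upper bound. You derive $p(t,x,y)\lesssim 1/V(x,\sqrt t)$ by combining Harnack with volume doubling, citing the equivalence chain $\mathcal H\Leftrightarrow VD+P\Leftrightarrow G$. That chain concerns Delmotte's \emph{local} parabolic Harnack inequality on cylinders; the Harnack inequality you obtain from the Li--Yau estimate under $CDE(n,0)$ is only a Harnack inequality for \emph{global} positive solutions of the heat equation and does not imply volume doubling. Indeed the paper (and your own final paragraph) stresses that $CDE(n,0)$ does not force volume doubling --- which is precisely why the lower bound in Theorem \ref{gau1} is $t^{-n}$ rather than $V(x,\sqrt t)^{-1}$ --- so your upper-bound step rests on a hypothesis that is unavailable and contradicts your own closing remarks. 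The repair is standard and makes the step easier: apply the Harnack inequality in the first spatial variable, $p(t,x,y)\le 2^{n}e^{C}\,p(2t,z,y)$ for every $z\in B(x,\sqrt t)$, multiply by $\mu(z)$, sum over $z\in B(x,\sqrt t)$, and use symmetry together with $\sum_{z}p(2t,y,z)\mu(z)\le 1$ to get $p(t,x,y)\,V(x,\sqrt t)\le 2^{n}e^{C}$; no doubling is needed. A minor secondary point: the Li--Yau inequality produced by $CDE(n,0)$ is $\Gamma(\sqrt u)/u-\partial_t\sqrt u/\sqrt u\le n/(2t)$, which for heat-equation solutions equals $-\Delta\sqrt u/\sqrt u\le n/(2t)$; your displayed version carries an extra $\partial_t\log\sqrt u$ term and is not the inequality that the curvature condition yields.
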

	
	Under the strengthened condition $CDE'(n_0,0)$ together with the loop‐and‐weight hypothesis $\Delta(\alpha)$, one recovers full two‐sided Gaussian estimates:
	
	\begin{theorem}\label{gaus1}
		If $G$ satisfies $CDE'(n_0,0)$ and $\Delta(\alpha)$, then there exist constants $C,C',c'>0$ (depending only on $n_0,\alpha$) such that for all $x,y\in V$ and $t>0$,
		\[
		\frac{C'}{V(x,\sqrt t)}
		\exp\!\bigl(-c'\tfrac{d(x,y)^2}{t}\bigr)
		\;\le\;
		p(t,x,y)
		\;\le\;
		\frac{C}{V(x,\sqrt t)}.
		\]
	\end{theorem}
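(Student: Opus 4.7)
The plan is to follow the three-step Li--Yau / Horn--Lin--Liu--Yau strategy adapted to graphs through the refined curvature-dimension condition $CDE'(n_0,0)$. In the first step, I would derive a discrete Li--Yau gradient (differential Harnack) estimate for positive solutions $u$ of the heat equation $\partial_t u = \Delta u$. Concretely, one works with the modified $\Gamma_2'$ operator built into $CDE'(n_0,0)$ and applies a discrete maximum principle to a test function of the form $F = t\,\Gamma(\log u) - \alpha\,\partial_t \log u$, on an exhaustion by finite subgraphs, to obtain $F \le C/t$. The loop--weight hypothesis $\Delta(\alpha)$ enters here to absorb the discrete error terms that have no continuous counterpart, chiefly coming from the failure of the chain rule.

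In the second step, I would integrate this gradient inequality along a shortest space--time path connecting two points $(x,s)$ and $(y,t)$ with $s<t$, using the graph distance in place of the Riemannian distance. Standard manipulations then yield a scale-invariant parabolic Harnack inequality for nonnegative caloric functions on cylinders of the shape $Q^\pm = B(x_0,\eta R)\times I^\pm$ with $|I^\pm|\asymp R^2$, with constants depending only on $n_0$ and $\alpha$.

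In the third step, I would invoke Delmotte's equivalence theorem on graphs, $\mathcal H \Longleftrightarrow VD+P \Longleftrightarrow G(c_l,C_l,C_r,c_r)$, recalled in the surrounding discussion. The parabolic Harnack inequality from step two delivers both the on-diagonal upper bound $p(t,x,y)\le C/V(x,\sqrt t)$ (via the ensuing ultracontractivity and a Nash-type inequality together with volume doubling) and the full Gaussian lower bound of the stated form, after a standard chaining argument that propagates the on-diagonal lower bound to off-diagonal points by concatenating Harnack comparisons along a chain of overlapping balls of radius $\asymp\sqrt{t/k}$ where $k\asymp d(x,y)^2/t$.

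The main obstacle is the first step. In the continuous Li--Yau proof one uses the chain rule $\Delta(\phi\circ u)=\phi'(u)\Delta u+\phi''(u)|\nabla u|^2$ together with Bochner's formula, and neither has a clean discrete analogue. The condition $CDE'$ is the minimal modification of the classical Bakry--\'Emery $CD(n_0,0)$ designed precisely to circumvent this failure, and verifying that its combination with the loop--weight hypothesis $\Delta(\alpha)$ actually produces the gradient estimate is a technically delicate computation that cannot be avoided. Once the gradient estimate is in hand, the remaining two steps reduce to well-developed machinery.
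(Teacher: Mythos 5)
First, a point of comparison: the paper does not prove Theorem \ref{gaus1} at all. It is quoted from Horn--Lin--Liu--Yau \cite{horn2019volume} as background justifying the heat-kernel bounds \eqref{gaussian}--\eqref{333}, so there is no in-paper argument to measure your proposal against; the relevant benchmark is the proof in that reference. Against that benchmark, your outline identifies the correct overall architecture (discrete Li--Yau gradient estimate, then a parabolic Harnack inequality, then Delmotte's equivalence $\mathcal H \Leftrightarrow VD+P \Leftrightarrow G$), but as submitted it is a roadmap rather than a proof. The entire analytic content of the theorem sits in the step you yourself flag as ``a technically delicate computation that cannot be avoided,'' namely deriving the gradient estimate from $CDE'(n_0,0)$; asserting that this step works is not carrying it out. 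Moreover, the ansatz you propose, $F = t\,\Gamma(\log u) - \alpha\,\partial_t \log u$, is the continuum test function. The discrete argument in \cite{horn2019volume} is run through $\sqrt{u}$ rather than $\log u$ --- the $CDE'$ condition is engineered around $\Gamma(\sqrt{u})$ and $\Delta\log u$ precisely because $\log u$ does not interact usably with the discrete Laplacian --- and the resulting Li--Yau inequality has the form $\Gamma(\sqrt{u})/u - \partial_t\sqrt{u}/\sqrt{u} \le n_0/(2t)$. So the maximum-principle computation you would actually have to perform differs materially from the one you describe.

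Second, a concrete misattribution: you place the hypothesis $\Delta(\alpha)$ inside Step 1, claiming it absorbs the chain-rule error terms. In \cite{horn2019volume} (and as the paper's own surrounding discussion indicates), the Li--Yau estimate, volume doubling, and the Poincar\'e inequality are all derived from $CDE'(n_0,0)$ without $\Delta(\alpha)$. The loop--weight condition is Delmotte's laziness/ellipticity hypothesis and enters only in your Step 3, where it rules out parity obstructions and makes the parabolic Harnack machinery and the off-diagonal lower bound work for the continuous-time kernel. Steps 2 and 3 are indeed standard once Step 1 is in hand, so the gap is localized --- but it is exactly the step that constitutes the theorem.
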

	
	These Gaussian bounds are instrumental in our analysis of heat propagation, regularity properties, and kernel estimates for the logarithmic Laplacian on graphs.  
	
	We now turn to a finer analysis of the logarithmic Laplacian, which hinges on sharp heat‐kernel estimates.  To this end, we restrict our attention to weighted lattice graphs satisfying
	$\mu_{\min}>0, \mu_{\max}:=\sup_{x\in \mathbb{Z}^d}\mu(x)<\infty$, equipped with th enormalized Laplacian with $\mu(x)=m(x)$, then the loop‐and‐weight condition $\Delta(\alpha)$ defined by Delmotte in \cite{delmotte1999parabolic} is satisfied: for some $\alpha > 0$, the graph $G$ satisfies $\Delta(\alpha)$ if
\begin{enumerate}
	\item $x \sim x$ for every $x \in V$ (i.e., every vertex has a loop), 
	\item for any $x, y \in V$ with $x \sim y$, we have $w_{xy} \geq \alpha m(x)$.
\end{enumerate}
	
These assumptions in particular imply $w_{\min}>0$ and also ensure that the minimal Laplacian $\Delta_{\mathrm{min}}$ is essentially self‐adjoint.

	The condition $\Delta(\alpha)$  is very mild: whenever $\mu_{\max}<\infty$ and $w_{\min}:=\inf_{x \sim y}w_{xy}>0$, one automatically has
	$\Delta(\tfrac{w_{\min}}{\sup_x\mu(x)})$.  Moreover, adding loops does not degrade curvature bounds or alter key geometric invariants (e.g.\ volume growth or diameter), so any graph may be modified to satisfy $\Delta(\alpha)$ without affecting the subsequent analysis.

Under these hypotheses and in light of Theorem \ref{gaus1}, the continuous‐time heat kernel \(p(t,x,y)\) on \(\mathbb{Z}^d\) satisfies estimates (\ref{gaussian})(\ref{222})(\ref{444})(\ref{333}).

Since \(\frac{m(x)}{\mu(x)}=1\), the normalized Laplacian \(\Delta\) is bounded on \(\ell^2(\mathbb{Z}^d,\mu)\) (see \cite[Theorem~1.27]{keller2021graphs}). Consequently, the heat semigroup preserves constants,
\[
e^{t\Delta}\mathbf{1} = \mathbf{1}
\quad\forall\,t\ge0,
\]
which is equivalent to stochastic completeness:
\[
\sum_{y\in\mathbb{Z}^d}p(t,x,y)\,\mu(y)=1,
\quad\forall\,t\ge0,\;x\in\mathbb{Z}^d.
\]

Under our assumptions, \(\mu_{\min}>0\) and
$\mu_{\max}<\infty$ yield the polynomial volume growth
\[
V(x,r)\;\sim\;r^d,
\quad r>0.
\]

\subsection{Kernel Estimates for the Logarithmic Laplacian}

With the precise heat‐kernel bounds in hand, we can now derive sharp estimates for the logarithmic kernel.  Recall that on \(\mathbb{Z}^d=\left(V,E,\mu,w\right)\),  we set
\[
W_{\log}(x,y)
=\mu(x)\mu(y)\int_0^1 p(t,x,y)\,\frac{dt}{t},
\]
and
\[
W(x,y)
=\mu(x)\mu(y)\int_1^\infty p(t,x,y)\,\frac{dt}{t}.
\]

Now, we give the proof of Proposition \ref{prop:Wlog11} and Proposition \ref{prop:Wxy11}. 

	\vspace{1\baselineskip}
	
\noindent \textbf{Proof of Proposition \ref{prop:Wlog11}:}

	The identity in (\ref{wlog}) is immediate from
	\[
	\sum_{y\ne x}p(t,x,y)\mu(y)
	=1 - p(t,x,x)\mu(x),
	\]
	and exchanging sum and integral. 
	
	 To prove (\ref{wlogg}), set \(r=d(x,y)\ge1\).  Using estimate \eqref{444} and \(V(x,\sqrt t)\ge\mu_{\min}\), 
	\[
	p(t,x,y)\lesssim e^r\Bigl(\tfrac{t}{r}\Bigr)^r=\frac{e^r}{r^r}t^r.
	\]
	Hence
	\[
	W_{\log}(x,y)
	\lesssim \mu(x)\mu(y)\frac{e^r}{r^r}\int_0^1 t^{r-1}\,dt
	=\mu(x)\mu(y)\,\frac{e^r}{r^{r+1}},
	\]
	which gives the claimed bound.\hfill $\square$

	\vspace{1\baselineskip}

\noindent \textbf{Proof of Proposition \ref{prop:Wxy11}:}

	By the Gaussian bounds \eqref{gaussian} and $0<\mu_{\min}\le\mu_{\max}<\infty$, one obtains immediately	
	$$W(x,x)\sim 1.$$
	Next, for \(r=d(x,y)\ge1\), by the Gaussian bounds \eqref{gaussian}, there exists $c_1>0$ such that
	\[
	W(x,y)
	\;=\;\mu(x)\mu(y)\int_1^\infty p(t,x,y)\,\frac{dt}{t} \gtrsim \int_1^\infty t^{-\frac d2-1}
	e^{-c_1\frac{r^2}{t}}\,dt.
	\]
Set \(u=c_1r^2/t\), then we obtain that
		\[
	W(x,y) \gtrsim r^{-d}\int_0^{c_{1}r^2} t^{\frac d2-1}
	e^{t}\,dt\gtrsim r^{-d}.
	\]

	For the upper bound, split
	\[
	\int_1^\infty=\int_1^{\frac{r}{e}}+\int_{\frac{r}{e}}^\infty.
	\]
On the interval $[1,\frac{r}{e}]$, the bound provided by \eqref{444} is too coarse to produce the desired decay in $W(x,y)$.  Instead, we invoke the Sharp Davies–Gaffney–Grigor’yan Lemma \cite{bauer2017sharp}, which yields for all $t>0$ and $r=d(x,y)$ the refined estimate
$$
p(t,x,y)\;\le\;\exp\!\bigl(-\zeta_1(t,r)\bigr)\le \exp\left(r-r\log \frac{2r}{t}\right)=e^{r}\left(\frac{t}{2r}\right)^r
$$
with
$$
\zeta_1(t,r)
= r \mathrm{arsinh}\!\bigl(\tfrac{r}{t}\bigr)
- \sqrt{r^2 + t^2} + t.
$$
Then 
\[
\int_1^{\frac{r}{e}} p(t,x,y)\,\frac{dt}{t}\lesssim \left(\frac{e}{2r}\right)^{r}\int_1^{\frac{r}{e}} t^{r-1}dt=\left(\frac{e}{2r}\right)^{r}\frac{r^re^{-r}-1}{r}\lesssim r^{-d}.
\]

	On \([\frac{r}{e},\infty)\), estimate \eqref{222} yields
	\[
	\int_{\frac{r}{e}}^\infty p(t,x,y)\,\frac{dt}{t}\lesssim \int_{\frac{r}{e}}^\infty t^{-1-\frac d2}e^{-c\frac{r^2}{t}}\,dt=r^{-d}\int_0^{cre}t^{\frac{d}{2}-1}e^{-t}dt\lesssim r^{-d}.
	\]
	Combining these gives \(W(x,y)\lesssim r^{-d}\).\hfill $\square$

\begin{remark}
	Proposition \ref{prop:Wxy11} implies 
	\(\sum_{y\in V}W(x,y)=+\infty\).
\end{remark}

For \(x\in\mathbb{Z}^d\) with neighbors \(\{y_i\}_{i=1}^\infty\), we define the log-gradient as
\[
\nabla^{\log}u(x)
=:\Bigl(
\sqrt{\tfrac{W_{\log}(x,y_i)}{2\,\mu(x)}}\bigl(u(x)-u(y_i)\bigr)
\Bigr)_{i\ge1}.
\]

Next, we prove that $	\nabla^{\log}$ is bounded on \(\ell^2(\mathbb{Z}^d)\) and long‐range component of the logarithmic Laplacian induces an unbounded quadratic form on \(\ell^2(\mathbb{Z}^d)\).

Subsequently, we give the proof of Proposition \ref{prop:unbounded22} and Proposition \ref{prop:unbounded11}. 

	\vspace{1\baselineskip}
	
\noindent \textbf{Proof of Proposition \ref{prop:unbounded22}:}

	By definition,
	\[
	\|\nabla^{\log}u\|_{\ell^2}^2
	=\sum_{x\in\mathbb{Z}^d}\sum_{y\ne x}\frac{W_{\log}(x,y)}{2\,\mu(x)}\bigl|u(x)-u(y)\bigr|^2.
	\]
	Using \(\lvert u(x)-u(y)\rvert^2\le2\lvert u(x)\rvert^2+2\lvert u(y)\rvert^2\) and symmetry of \(W_{\log}\), one finds
	\[
	\|\nabla^{\log}u\|_{\ell^2}^2
	\le\sum_{x\in\mathbb{Z}^d}\lvert u(x)\rvert^2\sum_{y\ne x}\frac{W_{\log}(x,y)}{\mu(x)}.
	\]
	By the kernel bound and \(\mu(y)\le \mu_{\max}\),
	\[
	\sum_{y\ne x}\frac{W_{\log}(x,y)}{\mu(x)}\lesssim\sum_{y\ne x}d(x,y)^{-1}\Bigl(\tfrac{e}{d(x,y)}\Bigr)^{d(x,y)}.
	\]
	Grouping by \(r=d(x,y)\ge1\) and using the fact that \(\#\{y:d(x,y)=r\}\lesssim r^{d-1}\), we obtain
	\[
	\sum_{y\ne x}d(x,y)^{-1}\Bigl(\tfrac{e}{d(x,y)}\Bigr)^{d(x,y)}\lesssim \sum_{r=1}^\infty r^{d-2}\Bigl(\tfrac{e}{r}\Bigr)^r
	\;<\;\infty.
	\]
	Thus, there is a uniform bound \(C>0\), so that
	\(\sum_{y\ne x}W_{\log}(x,y)/\mu(x)\le C\).  It follows that
	\[
	\|\nabla^{\log}u\|_{\ell^2}^2
	\le C\sum_{x\in\mathbb{Z}^d}\lvert u(x)\rvert^2
	=C\,\|u\|_{\ell^2}^2.
	\]
	Thus, we obtain the desired result.\hfill $\square$

	\vspace{1\baselineskip}

\noindent \textbf{Proof of Proposition \ref{prop:unbounded11}:}
	Let 
	\[
	u_n(x)
	=\frac{\mathbf{1}_{\{d(x,0)\le n\}}(x)}{\sqrt{V(0,n)}},
	\]
	where \(V(0,n)=\sum_{d(x,0)\le n}\mu(x)\).  Then \(\|u_n\|_{\ell^2}=1\).  By Proposition \ref{prop:Wxy11},
	\[
	W(x,y)\gtrsim d(x,y)^{-d},\:x\ne y,
	\]
	so
	\[
	\sum_{x,y}u_n(x)u_n(y)W(x,y)
	\;\gtrsim\;\frac{1}{V(0,n)}\sum_{d(x,0)\le n,\:d(y,0)\le n}d(x,y)^{-d}.
	\]
	Fixing \(x\) with \(d(x,0)\le n\), the inner sum over \(y\ne x\) satisfies
	\[
	\sum_{\substack{y:\,d(y,0)\le n\\y\ne x}}d(x,y)^{-d}
	\;\ge\;\sum_{k=1}^{n}\frac{|S(k)|}{k^{d}}
	\;\gtrsim\;\sum_{k=1}^{n}\frac{k^{d-1}}{k^d}
	=\sum_{k=1}^n\frac1k,
	\]
	where
	$$
	S(k)=:\{\,y\in\mathbb{Z}^d : d(y,0)=k\}.
	$$
Hence,
	\[
	\sum_{x,y}u_n(x)u_n(y)W(x,y)
	\;\gtrsim \sum_{k=1}^n\frac1k
	\;\longrightarrow\;\infty,
	\]
	as claimed.\hfill $\square$

\subsection{Convergence and Integrability}

In Propositions \ref{slx1}–\ref{slx3} above, we have established convergence statements for the fractional and logarithmic Laplacians in the $\ell^2$‐norm, but at the cost of rather strong regularity assumptions on the underlying functions.  In the sequel, we will develop a completely different approach based on the pointwise integral representation of $(-\Delta)^s$, which allows us to weaken these function‐space hypotheses substantially.  Along the way, we introduce several auxiliary lemmas of independent interest-beyond their role in our convergence proofs, and conclude the section by analyzing the integrability properties of the logarithmic Laplacian kernel.

\begin{lemma}\label{wxy1}
	For $x \neq y \in \mathbb{Z}^d $, we have
	\[
	\mu(x) \mu(y) \, p'(0, x, y) = w_{xy}.
	\]
\end{lemma}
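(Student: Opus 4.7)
The approach is to read off $p'(0,x,y)$ from the action of the formal Laplacian on a single-vertex indicator, using the semigroup identity $\partial_t e^{t\Delta}|_{t=0}=\Delta.$ I first recall the kernel representation
\[
e^{t\Delta}u(x)=\sum_{z\in V}p(t,x,z)\,u(z)\,\mu(z),
\]
and specialize to $u=\mathbf{1}_{\{y\}}$ with $y\ne x,$ so that the sum collapses to a single term:
\[
e^{t\Delta}\mathbf{1}_{\{y\}}(x)=p(t,x,y)\,\mu(y),\qquad t\ge 0.
\]

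Next I differentiate both sides at $t=0.$ The right-hand side yields $\mu(y)\,p'(0,x,y).$ For the left-hand side I invoke the fact that, under the standing hypotheses $0<\mu_{\min}\le\mu_{\max}<\infty$ with the normalized Laplacian, $-\Delta$ is a bounded self-adjoint operator on $\ell^2(\mathbb{Z}^d,\mu)$ (as noted in subsection~\ref{rehe} via \cite[Theorem~1.27]{keller2021graphs}). Consequently the semigroup $e^{t\Delta}$ is analytic in operator norm with $\partial_t e^{t\Delta}|_{t=0}=\Delta;$ evaluating at a single vertex $x$ is a bounded linear functional, so the left-hand side is pointwise differentiable at $t=0$ with derivative $\Delta\mathbf{1}_{\{y\}}(x).$ A direct computation from the formal Laplacian gives
\[
\Delta\mathbf{1}_{\{y\}}(x)=\frac{1}{\mu(x)}\sum_{z\in V}w_{xz}\bigl(\mathbf{1}_{\{y\}}(z)-\mathbf{1}_{\{y\}}(x)\bigr)=\frac{w_{xy}}{\mu(x)},
\]
since $\mathbf{1}_{\{y\}}(x)=0$ when $x\ne y.$

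Equating the two derivatives gives $\mu(y)\,p'(0,x,y)=w_{xy}/\mu(x),$ and multiplying through by $\mu(x)$ yields the claim. There is essentially no obstacle: the whole argument is a one-line semigroup calculation together with a direct evaluation of $\Delta$ on $\mathbf{1}_{\{y\}}.$ The only point that requires a word of justification is the pointwise differentiability at $t=0$ of the single term $p(t,x,y)\mu(y),$ and this follows from the boundedness of the normalized Laplacian on $\ell^2(\mathbb{Z}^d,\mu)$ noted above.
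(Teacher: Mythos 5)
Your argument is correct and is essentially the same as the paper's: both specialize the kernel representation of $e^{t\Delta}$ to the indicator of $\{y\}$, differentiate at $t=0$ using $\partial_t e^{t\Delta}|_{t=0}=\Delta$, and compare with the direct evaluation $\Delta\mathbf{1}_{\{y\}}(x)=w_{xy}/\mu(x)$. The only difference is that you spell out the justification for pointwise differentiability at $t=0$ (via boundedness of the normalized Laplacian and norm-analyticity of the semigroup), which the paper leaves implicit.
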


\begin{proof}
	Let $u_0 \in \ell^{\infty}(\mathbb{Z}^d)$ and define the heat semigroup by
	\[
	(e^{t\Delta} u_0)(x) := \sum_{z \in V} p(t, x, z) \, u_0(z) \, \mu(z).
	\]
	Then, by the definition of the generator $\Delta$, we have
	\[
	\left. \frac{d}{dt} \right|_{t=0} (e^{t\Delta} u_0)(x) = \Delta\: u_0(x).
	\]
	Now, choose $u_0 = \delta_y$, the Dirac function at $y$. Then
	\[
	(e^{t\Delta} \delta_y)(x) = p(t, x, y) \mu(y),
	\]
	which gives
	\[
	\left. \frac{d}{dt} \right|_{t=0} (e^{t\Delta} \delta_y)(x) = p'(0, x, y) \mu(y).
	\]
	On the other hand, using the definition of $\Delta$ on Dirac functions, we have
	\[
	\Delta \delta_y(x) = \frac{w_{xy}}{\mu(x)}.
	\]
	Combining both expressions for the derivative at $t = 0$, we obtain
	\[
	p'(0, x, y) \mu(y) = \frac{w_{xy}}{\mu(x)},
	\]
	which yields the desired identity:
	\[
	\mu(x) \mu(y) \, p'(0, x, y) = w_{xy}.
	\]
\end{proof}

\begin{remark}
	The above result also applies to finite graphs.
\end{remark}

\begin{lemma}\label{higher‐derivative-nearest}
	For every integer \(k\ge 0\) and \(x,y\in\mathbb{Z}^d\),
	\[
	\sup_{t\in[0,1]}\,
	\bigl|\partial_t^k p(t,x,y)\bigr|
	\;\le\; \frac{2^k}{\mu_{min}}.
	\]
\end{lemma}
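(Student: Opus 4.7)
The plan is to express $\partial_t^k p(t,x,y)$ via the spectral/heat‐semigroup representation of the normalized Laplacian and then bound it by a simple Cauchy–Schwarz estimate, exploiting the fact, already recorded in the excerpt, that on the weighted lattice $\mathbb{Z}^d$ with $\mu(x)=m(x)$ the operator $-\Delta$ is bounded on $\ell^2(\mathbb{Z}^d,\mu)$ with $\sigma(-\Delta)\subset[0,2]$. Concretely, the paper already gives the identity
\[
p(t,x,y)=\frac{1}{\mu(x)\mu(y)}\bigl\langle \mathbf{1}_{\{x\}},\,e^{t\Delta}\mathbf{1}_{\{y\}}\bigr\rangle_{\ell^2(V,\mu)},
\]
so the natural first step is to differentiate $k$ times in $t$, pulling the derivatives inside the inner product via functional calculus, to obtain
\[
\partial_t^{k}p(t,x,y)=\frac{1}{\mu(x)\mu(y)}\bigl\langle \mathbf{1}_{\{x\}},\,\Delta^{k}e^{t\Delta}\mathbf{1}_{\{y\}}\bigr\rangle_{\ell^2(V,\mu)}.
\]

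Next, I would apply Cauchy--Schwarz in $\ell^2(V,\mu)$, giving
\[
\bigl|\partial_t^{k}p(t,x,y)\bigr|
\le \frac{1}{\mu(x)\mu(y)}\,\bigl\|\mathbf{1}_{\{x\}}\bigr\|_{\ell^2}\,
\bigl\|\Delta^{k}e^{t\Delta}\mathbf{1}_{\{y\}}\bigr\|_{\ell^2}.
\]
The norms of the delta functions are $\sqrt{\mu(x)}$ and $\sqrt{\mu(y)}$ respectively. For the operator factor, the functional calculus combined with $\sigma(-\Delta)\subset[0,2]$ yields the operator-norm bound $\|\Delta^{k}e^{t\Delta}\|_{\ell^2\to\ell^2}\le 2^k$ uniformly for $t\ge 0$ (the semigroup is contractive and the spectral multiplier $|\lambda|^k$ is bounded by $2^k$ on the spectrum). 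Putting these together gives
\[
\bigl|\partial_t^{k}p(t,x,y)\bigr|\le \frac{2^{k}}{\sqrt{\mu(x)\mu(y)}}\le\frac{2^{k}}{\mu_{\min}},
\]
which is the desired estimate, in fact uniform in all $t\ge 0$ (not only $t\in[0,1]$).

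There is no real obstacle here; the only thing to double-check is the validity of moving $\partial_t^k$ inside the inner product, which is standard for strongly continuous semigroups with bounded generator (as is the case because $-\Delta$ is bounded on $\ell^2(V,\mu)$ under the present hypotheses), and the sharp operator bound $\|-\Delta\|\le 2$, which is guaranteed by the normalization $\mu=m$ together with $\sigma(-\Delta)\subset[0,2]$ recalled in the introduction.
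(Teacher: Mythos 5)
Your proof is correct, but it takes a genuinely different route from the paper. The paper argues by induction on $k$: the base case uses $0\le p(t,x,y)\le \mu_{\min}^{-1}$ (from stochastic completeness), and the inductive step applies the heat equation $\partial_t^k p=\Delta_x\bigl(\partial_t^{k-1}p(t,\cdot,y)\bigr)$ together with the normalization $\tfrac{1}{\mu(x)}\sum_{z\sim x}w_{xz}=\tfrac{m(x)}{\mu(x)}=1$, so that each time derivative costs at most a factor of $2$. You instead work spectrally: from $p(t,x,y)=\tfrac{1}{\mu(x)\mu(y)}\langle\mathbf{1}_{\{x\}},e^{t\Delta}\mathbf{1}_{\{y\}}\rangle$ you differentiate under the (bounded) semigroup, apply Cauchy--Schwarz, and use $\sup_{\lambda\in[0,2]}\lambda^k e^{-t\lambda}\le 2^k$. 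Both arguments ultimately rest on the same normalization $\mu=m$ (the paper via the unit row sums of the Laplacian, you via $\sigma(-\Delta)\subset[0,2]$), and all steps you flag as needing care (differentiating inside the inner product for a bounded generator, the spectral bound) are indeed standard and valid here. Your version buys a slightly sharper constant, $2^k/\sqrt{\mu(x)\mu(y)}$, uniformly for all $t\ge0$ rather than just $t\in[0,1]$. The paper's elementary induction, on the other hand, is the template that is reused immediately afterwards in Lemma~\ref{rhyjx}, where the same heat-equation recursion is combined with off-diagonal decay in $d(x,y)$ -- something the crude operator-norm bound cannot see -- which is presumably why the authors chose that route.
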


\begin{proof}
	We prove it by induction on \(k\in\mathbb N\).  
	For \(k=0\), the bound follows from \(0 \le p(t,x,y) \le \mu_{\min}^{-1}\).
	
	Assume the result holds for order \(k-1\). Using
	\[
	\partial_t^k p(t,x,y)
	= \Delta_x\bigl(\partial_t^{k-1}p(t,\cdot,y)\bigr)(x),
	\]
	we have
	\[
	\bigl|\partial_t^k p(t,x,y)\bigr|
	\le \frac{1}{\mu(x)}\sum_{z\sim x} w_{xz}
	\left| \partial_t^{k-1}p(t,z,y) - \partial_t^{k-1}p(t,x,y) \right|.
	\]
	By the induction hypothesis, \(|\partial_t^{k-1}p(t,x,y)| \le \frac{2^{k-1}}{\mu_{min}}\) uniformly on \([0,1]\).  
	Hence
	\[
	\sup_{t\in[0,1]}\,
	\bigl|\partial_t^k p(t,x,y)\bigr|
	\;\le\; \frac{2^k}{\mu_{min}},
	\]
	which is the desired inequality.
\end{proof}

Next estimate shows that, for \(d(x,y)\) large compared to the order of differentiation \(k\), the derivatives of the heat kernel exhibit rapid  decay in the distance \(d(x,y)\).

\begin{lemma}\label{rhyjx}
	For each integer \(k\ge0\), there exists a constant \(C_k>0\), such that for all distinct \(x,y\in\mathbb{Z}^d\) with $d(x,y)\ge k$, $\delta\in (0,1),$
	\[
	\sup_{t\in[0,\delta]} \bigl|\partial_t^k p(t,x,y)\bigr|
	\;\le\;\frac{C_k}{\bigl(d(x,y)-k\bigr)!}\,.
	\]
\end{lemma}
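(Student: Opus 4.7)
The plan is to induct on $k$, turning each time derivative into a spatial finite difference via the heat-equation identity $\partial_t p(t,\cdot,y) = \Delta_x p(t,\cdot,y)$, and then exploiting the geometric fact that every neighbor $z$ of $x$ satisfies $d(z,y) \ge d(x,y) - 1$. Each differentiation thus costs at most one unit from the factorial denominator, which is exactly the rate at which the target bound degrades in $k$.

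The base case $k=0$ is precisely estimate (\ref{333}), which gives $p(t,x,y) \le C/d(x,y)!$ uniformly on $(0,\delta]$ for $x \ne y$. For the inductive step, assume the bound at order $k-1$ with constant $C_{k-1}$, and split according to whether $d(x,y) = k$ or $d(x,y) \ge k+1$. In the boundary case $d(x,y) = k$ the desired right-hand side reduces to the constant $C_k$, so it suffices to invoke Lemma \ref{higher‐derivative-nearest} to conclude $\sup_{t\in[0,\delta]} |\partial_t^k p(t,x,y)| \le 2^k/\mu_{\min}$. In the generic case $d(x,y) \ge k+1$, I would expand
\[
\partial_t^k p(t,x,y) \;=\; \frac{1}{\mu(x)}\sum_{z \sim x} w_{xz}\bigl(\partial_t^{k-1} p(t,z,y) - \partial_t^{k-1} p(t,x,y)\bigr),
\]
observe that every neighbor $z$ satisfies $d(z,y) \ge d(x,y)-1 \ge k \ge 1$, hence $z \ne y$ and the inductive hypothesis applies to both $(z,y)$ and $(x,y)$. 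Using the factorial monotonicity $(d(z,y)-(k-1))! \ge (d(x,y)-k)!$ and $(d(x,y)-(k-1))! \ge (d(x,y)-k)!$, together with $\sum_{z\sim x} w_{xz}/\mu(x) = 1$ for the normalized Laplacian (where $\mu(x) = m(x)$), the right-hand side is dominated by $2C_{k-1}/(d(x,y)-k)!$. Choosing $C_k := \max\bigl(2C_{k-1},\,2^k/\mu_{\min}\bigr)$ closes the induction.

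The main obstacle, and the reason for the two-subcase decomposition, is the nearest-neighbor contribution when $d(x,y) = k = 1$: here $y$ itself is a neighbor of $x$, so the inductive hypothesis (which presupposes distinct endpoints) cannot be invoked on the term $\partial_t^{k-1} p(t,y,y) = p(t,y,y)$. Lemma \ref{higher‐derivative-nearest} is precisely the device that short-circuits these degenerate diagonal contributions by supplying a crude but universally valid constant bound matching the degenerate factorial $1/0! = 1$. No analytic heat-kernel fine structure is needed beyond (\ref{333}); the entire argument is driven by the generator identity plus the triangle-inequality estimate $d(z,y) \ge d(x,y) - 1$.
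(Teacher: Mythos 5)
Your proof is correct and follows essentially the same route as the paper: induction on $k$ with base case \eqref{333} and inductive step via $\partial_t^k p=\Delta_x\partial_t^{k-1}p$ combined with $d(z,y)\ge d(x,y)-1$ and $\sum_{z\sim x}w_{xz}/\mu(x)=1$. Your explicit case-split at $d(x,y)=k$, where you fall back on the crude uniform bound $\sup_{t\in[0,1]}\bigl|\partial_t^k p(t,x,y)\bigr|\le 2^k/\mu_{\min}$ to cover the degenerate contribution from $z=y$ when $k=1$ and $y\sim x$ (a case estimate \eqref{333} does not reach, since it requires $x\ne y$), is in fact more careful than the paper's version, which applies the inductive hypothesis there without comment.
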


\begin{proof}
	We proceed by induction on \(k\).  Recall that the heat kernel satisfies the heat equation
	\[
	\partial_t p(t,x,y) \;=\;\Delta_x\,p(t,\cdot,y)(x),
	\]
	where \(\Delta\) is the normalized Laplacian
	\(\Delta f(x)=\tfrac1{\mu(x)}\sum_{z\sim x}w_{xz}\bigl(f(z)-f(x)\bigr)\).
	
	\medskip\noindent\textbf{Base case \(k=0\).}  
	The off–diagonal Gaussian‐type bound (\ref{333}) gives, for all \(t\in[0,\delta]\) 
	\[
	p(t,x,y)\le\;
	\frac{c}{d(x,y)!},
	\]
	where $c_1,c_2>0$,  depending only on the dimension \(d\) and $\mu_{min}$.
	
	\medskip\noindent\textbf{Inductive step.}  
	Assume the lemma holds for \(k-1\), i.e.\ there is \(C_{k-1}\) so that whenever \(d(x,y)\ge k-1\),
	\[
	\sup_{t\in[0,\delta]}\bigl|\partial_t^{\,k-1}p(t,x,y)\bigr|
	\;\le\;
	\frac{C_{k-1}}{\bigl(d(x,y)- (k-1)\bigr)!}.
	\]
	Then
	\[
	\partial_t^k p(t,x,y)
	=\partial_t\bigl(\partial_t^{\,k-1}p(t,x,y)\bigr)
	=\Delta_x\bigl(\partial_t^{\,k-1}p(t,\cdot,y)\bigr)(x).
	\]
	Hence
	\[
	\bigl|\partial_t^k p(t,x,y)\bigr|
	\;\le\;
	\frac1{\mu(x)}\sum_{z\sim x}w_{xz}
	\Bigl|\partial_t^{\,k-1}p(t,z,y)-\partial_t^{\,k-1}p(t,x,y)\Bigr|.
	\]
	For each neighbor \(z\sim x\), the graph‐distance to \(y\) satisfies
	\(
	d(z,y)\ge d(x,y)-1\ge k-1,
	\)
	so by the inductive hypothesis both
	\(\bigl|\partial_t^{k-1}p(t,z,y)\bigr|\) and
	\(\bigl|\partial_t^{k-1}p(t,x,y)\bigr|\) are bounded by
	\(C_{k-1}/\bigl(d(x,y)-k\bigr)!\).  Hence
	\[
	\bigl|\partial_t^k p(t,x,y)\bigr|
	\;\le\;
	\frac1{\mu(x)}\sum_{z\sim x}w_{xz}
	\;\,\frac{2C_{k-1}}{(d(x,y)-k)!}
	\;=\;
	\,\frac{2C_{k-1}}{(d(x,y)-k)!}.
	\]
	Setting \(C_k=2\,C_{k-1}\) completes the induction.
\end{proof}

\begin{lemma} \label{lem:uniform‐bound‐factorial} 
	Let \(k \in \mathbb{N}\). There exists a constant \(C>0\), such that
	\[
	\sup_{x\in \mathbb{Z}^d}\sum_{y:\,d(x,y)\ge k}\,\frac{1}{\bigl(d(x,y)-k\bigr)!}
	\;\le\; C.
	\]
\end{lemma}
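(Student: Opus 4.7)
The approach is to exploit translation invariance of $\mathbb{Z}^d$ together with the polynomial growth of distance spheres. First I would record the standard combinatorial fact that, in the graph metric on $\mathbb{Z}^d$, the spheres $S(x,r) := \{y \in \mathbb{Z}^d : d(x,y) = r\}$ satisfy a uniform bound $|S(x,r)| \le C_d\,(1+r)^{d-1}$ with $C_d$ depending only on the dimension; by translation invariance this bound is independent of the basepoint $x$, which gives precisely the uniformity required by the supremum in the statement.

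The plan is then to group the sum by the value $r = d(x,y)$ and apply the sphere bound:
\[
\sum_{y:\,d(x,y)\ge k} \frac{1}{(d(x,y)-k)!}
\;=\; \sum_{r\ge k} \frac{|S(x,r)|}{(r-k)!}
\;\le\; C_d \sum_{r\ge k} \frac{(1+r)^{d-1}}{(r-k)!}.
\]
After the change of index $j = r-k \ge 0$, the right-hand side becomes $C_d \sum_{j\ge 0}(1+k+j)^{d-1}/j!$. For fixed $k$ and $d$, $(1+k+j)^{d-1}$ is a polynomial in $j$ of degree $d-1$, so this series reduces to a finite linear combination of the elementary convergent series $\sum_{j\ge 0} j^m/j!$, $0\le m\le d-1$, whose convergence follows because the factorial dominates any polynomial growth (indeed each such sum equals $e$ times a Bell/Touchard number). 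The resulting bound depends only on $k$ and $d$, yielding the desired constant $C$ uniform in $x$.

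There is essentially no serious obstacle here: the only nontrivial input beyond bookkeeping is the polynomial sphere count on $\mathbb{Z}^d$, which is elementary, and the absolute convergence of $\sum_{j} j^m/j!$, which is immediate. If desired, one could make the constant fully explicit by writing $(1+k+j)^{d-1} = \sum_{m=0}^{d-1}\binom{d-1}{m}(1+k)^{d-1-m} j^m$ and bounding each term separately, but this is not needed for the qualitative statement.
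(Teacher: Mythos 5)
Your proposal is correct and follows essentially the same route as the paper: bound the sphere cardinality $\#\{y : d(x,y)=r\}\lesssim r^{d-1}$ uniformly in $x$ by translation invariance, group the sum by shells, and conclude from the convergence of $\sum_r r^{d-1}/(r-k)!$. The only difference is that you spell out the final convergence (reindexing by $j=r-k$ and reducing to $\sum_j j^m/j!$), which the paper leaves implicit.
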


\begin{proof}
	For each integer \(r \ge k\), the number of vertices at distance \(r\) from \(x\) satisfies
	\[
	\#\{y : d(x,y) = r\} \le C\,r^{d-1},
	\]
	for some constant \(C = C(d)\). Then
	\[
	\sum_{y:\,d(x,y)\ge k} \frac{1}{(d(x,y) - k)!}
	= \sum_{r=k}^{\infty} \frac{\#\{y : d(x,y) = r\}}{(r - k)!}
	\le A \sum_{r=k}^{\infty} \frac{r^{d-1}}{(r - k)!},
	\]
	which is finite and independent of \(x\), as desired.
\end{proof}

\begin{corollary}\label{yzslj}
	Let \(x \in \mathbb{Z}^d\),$\delta\in (0,1)$. Then the series
	\[
	\sum_{y \in \mathbb{Z}^d} \partial_t p(t, x, y) \mu(y)
	\]
	converges uniformly with respect to \(t \in [0,\delta]\), and for all \(t \in [0,\delta]\) we have
	\[
	\sum_{y \in \mathbb{Z}^d} \partial_t p(t, x, y) \mu(y) = 0.
	\]
	
\end{corollary}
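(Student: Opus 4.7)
The plan is to deduce the corollary from stochastic completeness by justifying term-by-term differentiation of the identity
\[
\sum_{y\in\mathbb{Z}^d} p(t,x,y)\,\mu(y)=1,\qquad t\ge 0,
\]
which holds on $\mathbb{Z}^d$ under the standing hypotheses (as noted in Subsection \ref{rehe}). Once uniform convergence of the differentiated series on $[0,\delta]$ is established, the Weierstrass-type theorem on term-by-term differentiation yields both the uniform convergence claim and the identity $\sum_y \partial_t p(t,x,y)\mu(y)=\frac{d}{dt}(1)=0$.

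The key step is therefore the construction of a summable majorant for $\sup_{t\in[0,\delta]}|\partial_t p(t,x,y)|\,\mu(y)$. I would split the sum over $y$ into the diagonal contribution $y=x$ and the off-diagonal contribution $y\ne x$. For $y=x$, Lemma \ref{higher‐derivative-nearest} with $k=1$ gives the uniform bound $\sup_{t\in[0,1]}|\partial_t p(t,x,x)|\le 2/\mu_{\min}$, so the diagonal term is harmless. For $y\ne x$, since $d(x,y)\ge 1$, Lemma \ref{rhyjx} with $k=1$ provides
\[
\sup_{t\in[0,\delta]}\bigl|\partial_t p(t,x,y)\bigr|
\le \frac{C_1}{\bigl(d(x,y)-1\bigr)!}.
\]
Multiplying by $\mu(y)\le \mu_{\max}$ and summing, Lemma \ref{lem:uniform‐bound‐factorial} with $k=1$ yields
\[
\sum_{y:\,d(x,y)\ge 1}\sup_{t\in[0,\delta]}\bigl|\partial_t p(t,x,y)\bigr|\mu(y)
\le \mu_{\max}\,C_1\sum_{y:\,d(x,y)\ge 1}\frac{1}{(d(x,y)-1)!}\;<\;\infty.
\]

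By the Weierstrass $M$-test, the series $\sum_y \partial_t p(t,x,y)\mu(y)$ therefore converges uniformly on $[0,\delta]$. Combined with the pointwise identity $\sum_y p(t,x,y)\mu(y)\equiv 1$, the classical differentiation-under-the-sum theorem permits us to interchange $\frac{d}{dt}$ and the sum, giving $\sum_y \partial_t p(t,x,y)\mu(y)=0$ on $[0,\delta]$. I do not anticipate any serious obstacle here: the previous three lemmas were specifically tailored to deliver exactly the summable majorant needed, and stochastic completeness on weighted lattices with $\mu_{\min}>0$ and $\mu_{\max}<\infty$ has already been recorded at the end of Subsection \ref{rehe}. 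The only minor care is to ensure $\delta<1$ so that Lemma \ref{rhyjx} applies on the full interval $[0,\delta]$, which is already part of the hypothesis.
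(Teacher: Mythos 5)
Your proposal is correct and follows essentially the same route as the paper: bound $\sup_{t\in[0,\delta]}|\partial_t p(t,x,y)|$ via Lemma \ref{rhyjx} with $k=1$, sum the majorant via Lemma \ref{lem:uniform‐bound‐factorial}, apply the Weierstrass $M$-test, and then differentiate the stochastic-completeness identity term by term. Your explicit treatment of the diagonal term $y=x$ via Lemma \ref{higher‐derivative-nearest} and the factor $\mu(y)\le\mu_{\max}$ are minor tidy-ups the paper leaves implicit.
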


\begin{proof}
	By Lemma \ref{rhyjx}, for $x,y\in \mathbb{Z}^d$ with $d\left(x,y\right)\ge 1$
	\[
	\left| \partial_t p(t,x,y) \right| \leq \frac{C}{\left(d(x,y)-1\right)!}, \:t\in [0,\delta].
	\]
	
	By Lemma \ref{lem:uniform‐bound‐factorial}, the series \(\sum_{y} \mathbf{1}_{\left\{d\left(x,y\right)\ge 1\right\}} \frac{1}{\left(d(x,y)-1\right)!}\) converges absolutely, the Weierstrass M-test implies that
	\[
	\sum_{y\in \mathbb{Z}^d} \partial_t p(t,x,y) \mu(y)
	\]
	converges uniformly in \(t\in[0,\delta]\).
	
	Hence, differentiation and summation can be interchanged:
	\[
	\partial_t \left( \sum_{y\in\mathbb{Z}^d} p(t,x,y)\mu(y) \right) = \sum_{y\in\mathbb{Z}^d} \partial_t p(t,x,y)\mu(y),
	\]
thus, by stochastically complete
	\[
	\sum_{y\in\mathbb{Z}^d} \partial_t p(t,x,y) \mu(y) = 0,
	\]
	as claimed.
\end{proof}

\begin{lemma}\label{stupbound}
	Fix \(x\in\mathbb{Z}^d\) and \(s\in(0,1)\).  Define
	\[
	S(t)
	=\sum_{\substack{y\in\mathbb{Z}^d,y\neq x}}
	\mu(y)\,p(t,x,y)\,t^{-1-s}.
	\]
	Set
	\[
	C_x \;=\;\mu(x)\,\max_{0\le\tau\le1}\bigl|\partial_\tau p(\tau,x,x)\bigr|.
	\]
	Then for all \(t>0\),
	\[
	S(t)\;\le\;
	\begin{cases}
		C_x\,t^{-s}, &0<t\le1,\\
		t^{-1-s},    &t>1.
	\end{cases}
	\]
\end{lemma}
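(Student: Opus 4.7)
The plan is to split the two regimes $0<t\le 1$ and $t>1$ and treat each by an elementary bound on the tail sum $\sum_{y\ne x}\mu(y)\,p(t,x,y)$, with the common starting point being stochastic completeness:
\[
\sum_{y\ne x}\mu(y)\,p(t,x,y)\;=\;1-\mu(x)\,p(t,x,x).
\]

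For the large-time case $t>1$, the argument is immediate. Since every term in the above sum is nonnegative and the total sum equals $1$, we get $\sum_{y\ne x}\mu(y)\,p(t,x,y)\le 1$, and multiplying by $t^{-1-s}$ yields $S(t)\le t^{-1-s}$, as required.

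The small-time case $0<t\le 1$ is where the constant $C_x$ enters. I would first observe that the initial condition $e^{0\Delta}=\mathrm{Id}$ forces $p(0,x,x)=1/\mu(x)$, so
\[
1-\mu(x)\,p(t,x,x)\;=\;\mu(x)\bigl(p(0,x,x)-p(t,x,x)\bigr).
\]
Lemma \ref{higher‐derivative-nearest} (with $k=1$) guarantees that $\partial_\tau p(\tau,x,x)$ is continuous and bounded on $[0,1]$, so the quantity $\max_{0\le\tau\le 1}|\partial_\tau p(\tau,x,x)|$ is finite and $C_x$ is well-defined. Applying the mean value theorem on $[0,t]\subset[0,1]$ then gives
\[
\bigl|p(t,x,x)-p(0,x,x)\bigr|\;\le\;t\,\max_{0\le\tau\le 1}\bigl|\partial_\tau p(\tau,x,x)\bigr|,
\]
and multiplying by $\mu(x)$ produces $1-\mu(x)p(t,x,x)\le C_x\,t$. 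Dividing by $t^{1+s}$ yields $S(t)\le C_x\,t^{-s}$.

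There is no real obstacle here: both cases reduce to the one-line identity coming from stochastic completeness, together with a mean value bound whose hypothesis is already supplied by Lemma \ref{higher‐derivative-nearest}. The only small point to mention is the sign (so that the absolute value can be dropped), but this is automatic since $p(t,x,x)\le 1/\mu(x)=p(0,x,x)$ follows from $\sum_y \mu(y)p(t,x,y)=1$ with nonnegative summands.
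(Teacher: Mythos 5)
Your proposal is correct and follows essentially the same route as the paper: stochastic completeness reduces $S(t)$ to $(1-\mu(x)p(t,x,x))\,t^{-1-s}$, the case $t>1$ is handled by nonnegativity of the summands, and the case $t\le 1$ by a first-order Taylor/mean-value bound on $p(\cdot,x,x)$ at $t=0$. Your additional remarks (finiteness of $C_x$ via Lemma \ref{higher‐derivative-nearest} and the sign of $1-\mu(x)p(t,x,x)$) are accurate and only make the argument more explicit than the paper's version.
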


\begin{proof}
	Since \(\sum_{y\in\mathbb{Z}^d }p(t,x,y)\mu(y)=1\), we have
	\[
	S(t)
	=\bigl(1-\mu(x)p(t,x,x)\bigr)\,t^{-1-s}.
	\]
	
	\emph{Case 1: \(0<t\le1\).}  By Taylor’s theorem,
	\[
	p(t,x,x)
	=p(0,x,x)+t\,\partial_\tau p(\tau,x,x)
	=\tfrac1{\mu(x)}+t\,\partial_\tau p(\tau,x,x)
	\quad\tau\in(0,1),
	\]
	so
	\[
	1-\mu(x)p(t,x,x)
	=-\,t\,\mu(x)\,\partial_\tau p(\tau,x,x)
	\;\le\;C_x\,t.
	\]
	Hence
	\[
	S(t)
	\le C_x\,t\cdot t^{-1-s}
	=C_x\,t^{-s}.
	\]
	
	\emph{Case 2: \(t>1\).}  Positivity of \(p\) gives \(0\le1-\mu(x)p(t,x,x)\le1\).  Thus
	\[
	S(t)
	\le t^{-1-s}.
	\]
	Combining both cases yields the claimed bounds.
\end{proof}

Thereafter, we proceed to prove two convergence results.

	\vspace{1\baselineskip}
	
\noindent \textbf{Proof of Proposition \ref{slx22211}:}

	Fix \(x\in \mathbb{Z}^d\). Enumerate \(\mathbb{Z}^d \setminus \{x\}\) as \(\{y_k\}_{k\geq 1}\), then all sums over \(y\neq x\) are written as sums over \(k\geq 1\).
	
	By definition,
	\[
	(-\Delta)^s u(x) = \frac{1}{\mu(x)} \sum_{k=1}^{\infty} W_s(x,y_k) (u(x) - u(y_k)),
	\]
	where
	\[
	W_s(x,y_k) = \frac{s}{\Gamma(1-s)} \mu(x)\mu(y_k) \int_0^{+\infty} p(t,x,y_k)t^{-1-s}\,dt.
	\]
	Thus,
	\[
	(-\Delta)^s u(x) + \Delta\: u(x) = \frac{1}{\mu(x)} \sum_{k=1}^{\infty} (W_s(x,y_k) - w_{xy_k})(u(x) - u(y_k)).
	\]
	Since \(u \in \ell^\infty(\mathbb{Z}^d)\), there exists \(M>0\) such that \(|u(x) - u(y_k)| \leq 2M\) for all \(k\).  
	Therefore,
	\[
	\left| (-\Delta)^s u(x) + \Delta u(x) \right| \leq \frac{2M}{\mu(x)} \sum_{k=1}^{\infty} \left|W_s(x,y_k) - w_{xy_k}\right|.
	\]
	
	Fix $\delta\in (0,1),$ we analyze \(\sum_{k=1}^{\infty} \left| W_s(x,y_k) - w_{xy_k} \right|\) by splitting \(W_s(x,y_k)\) into two parts:
	\[
	W_s(x,y_k) = W_s^{(1)}(x,y_k) + W_s^{(2)}(x,y_k),
	\]
	where
	\[
	W_s^{(1)}(x,y_k) = \frac{s}{\Gamma(1-s)} \mu(x)\mu(y_k) \int_0^\delta p(t,x,y_k)t^{-1-s}\,dt,\]
	and
	\[	W_s^{(2)}(x,y_k) = \frac{s}{\Gamma(1-s)} \mu(x)\mu(y_k) \int_\delta^{+\infty} p(t,x,y_k)t^{-1-s}\,dt.
	\]
	
	\noindent
	\textbf{Step 1: Estimate for large time integral $W_s^{(2)}$}
	
	By stochastic completeness, for all \(t\geq 0\),
	\[
	\sum_{k=1}^{\infty} p(t,x,y_k) \mu(y_k) \leq 1.
	\]
	It follows that
	\[
	\sum_{k=1}^{\infty} |W_s^{(2)}(x,y_k)| \leq \frac{s}{\Gamma(1-s)} \mu(x) \int_\delta^{+\infty} t^{-1-s}\,dt=\frac{\mu(x)\delta^{-s}}{\Gamma(1-s)}.
	\]

	\medskip
	
	\noindent\textbf{Step 2: Small‐time estimate via Taylor expansion with remainder.}
	
	For \(0\le t\le \delta\), the Taylor theorem gives
	\[
	p(t,x,y_k)
	= p(0,x,y_k) + t\,p'(0,x,y_k) + \tfrac12\,t^2\,p''(\theta_t,x,y_k).
	\]
	Since \(p(0,x,y_k)=0\) for \(k\ge1\), this reduces to
	\[
	p(t,x,y_k)
	= t\,p'(0,x,y_k) + \tfrac12\,t^2\,p''(\theta_t,x,y_k),\:\theta_t\in (0,\delta).
	\]
	Hence
	\[
	\int_0^\delta p(t,x,y_k)\,t^{-1-s}\,dt
	= p'(0,x,y_k)\int_0^\delta t^{-s}\,dt
	+ \tfrac12\int_0^\delta t^{1-s}\,p''(\theta_t,x,y_k)\,dt.
	\]
	By Lemma \ref{wxy1}, we obtain that
	\[
	\begin{aligned}
		W_s^{(1)}(x,y_k)-w_{xy_k}&
		=\mu(x)\mu(y_k)\,p'(0,x,y_k)\Bigl(\tfrac{s\delta^{1-s}}{(1-s)\Gamma(1-s)}-1\Bigr)
		\\&+\;\frac{s}{2\,\Gamma(1-s)}\,\mu(x)\mu(y_k)
		\int_0^\delta t^{1-s}\,p''(\theta_t,x,y_k)\,dt.
	\end{aligned}
	\]
	By Lemma \ref{wxy1}, $p'(0,x,y_k)\ge 0.$ By Lemma \ref{higher‐derivative-nearest} and Lemma \ref{rhyjx} , there exists $c_1,c_2>0$, such that
	\[\begin{aligned}
		\sum_{k=1}^\infty\bigl|W_s^{(1)}(x,y_k)-w_{xy_k}\bigr|\le &\sum_{k=1}^\infty \mu(x)\mu(y_k)\,p'(0,x,y_k)\left|\tfrac{s\delta^{1-s}}{(1-s)\Gamma(1-s)}-1\right|+\\&\frac{s\delta^{2-s}\mu(x)}{2\left(2-s\right)\,\Gamma(1-s)}\left(c_1+\sum_{k=1}^\infty \mathbf{1}_{\left\{d\left(x,y_k\right)\ge 2\right\}}
		\frac{c_2}{\left(d(x,y_k)-2\right)!}\right).
	\end{aligned}\]
	By Corollary \ref{yzslj}, we obtain that
	\[\bigl|\sum_{k=1}^\infty \mu(x)\mu(y_k)\,p'(0,x,y_{k})\bigr|=\mu(x)^{2}p^{\prime}\left(0,x,x\right).\]
	Hence,
	\[\begin{aligned}
		\left| (-\Delta)^s u(x) + \Delta u(x) \right| \leq&  \frac{2M\delta^{-s}}{\Gamma(1-s)}+2M\mu(x)p^{\prime}(0,x,x)\left|\tfrac{s\delta^{-s}}{(1-s)\Gamma(1-s)}-1\right|\\+&\frac{sM\delta^{2-s}}{\left(2-s\right)\,\Gamma(1-s)}\left(c_1+\sum_{k=1}^\infty \mathbf{1}_{\left\{d\left(x,y_k\right)\ge 2\right\}}
		\frac{c_2}{\left(d(x,y_k)-2\right)!}\right)
	\end{aligned}
	\]
	By Lemma \ref{lem:uniform‐bound‐factorial}, the last sum is uniformly bounded in \(x\).  Hence by Lemma \ref{higher‐derivative-nearest}
	\[
	\lim_{s\to1^-}\bigl\|(-\Delta)^s u + \Delta u\bigr\|_\infty = 0.
	\]
\hfill $\square$

\begin{remark}
	 Proposition~\ref{slx22211} is strictly stronger than Proposition~\ref{slx2}, as a result of \(H^2(\mathbb{Z}^d)\subset \ell^\infty(\mathbb{Z}^d)\).
\end{remark}

\noindent \textbf{Proof of Proposition \ref{slx11111}:}

	Fix \(x\in\mathbb{Z}^d\).  By definition,
	\[
	(-\Delta)^s u(x)-u(x)
	=\frac{1}{\mu(x)}\sum_{y\ne x}W_s(x,y)\bigl(u(x)-u(y)\bigr)\;-\;u(x).
	\]
	Rearrange as
	\[
	(-\Delta)^s u(x)-u(x)
	=\frac{1}{\mu(x)}\sum_{y\ne x}W_s(x,y)\,u(x)
	-\,u(x)
	\;-\;
	\frac{1}{\mu(x)}\sum_{y\ne x}W_s(x,y)\,u(y).
	\]
	Hence
	\[
	\bigl|(-\Delta)^s u(x)-u(x)\bigr|
	\;\le\;
	\left|\frac{1}{\mu(x)}\sum_{y\ne x}W_s(x,y)\,
	-1\right|u(x)
	\;+\;
	\frac{1}{\mu(x)}
	\sum_{y\ne x}W_s(x,y)\,\bigl|u(y)\bigr|.
	\]
	We split
	\[
	W_s(x,y)
	= W_s^{(1)}(x,y) + W_s^{(2)}(x,y),
	\]
	with
	\[
	W_s^{(1)}(x,y)=\frac{s}{\Gamma(1-s)}\,\mu(x)\mu(y)\int_0^1 p(t,x,y)\,t^{-1-s}dt\]
	and
	\[W_s^{(2)}(x,y)=\frac{s}{\Gamma(1-s)}\,\mu(x)\mu(y)\int_1^\infty p(t,x,y)\,t^{-1-s}dt.
	\]
	By Lemma \ref{stupbound} and stochastic completeness, so
	\[
	\sum_{y\in\mathbb{Z}^d,y\ne x}W_s^{(1)}(x,y)
	=\frac{s}{\Gamma(1-s)}\,\mu(x)\int_0^1\left(1-\mu(x)p(t,x,x)\right)t^{-1-s}\,dt.
	\]
	By the expansion
	\begin{equation}\label{taly1}
		p(t,x,x) = \frac{1}{\mu(x)} + t\,p'(\tau,x,x), \quad \tau \in (0,1),
	\end{equation}
	we obtain that
	\begin{equation}\label{ws11}
		\sum_{y\in\mathbb{Z}^d,y\ne x}W_s^{(1)}(x,y)
		=-\frac{s}{\Gamma(1-s)}\,\mu(x)^2\int_0^1p^{\prime}\left(\tau,x,x\right)t^{-s}\,dt.
	\end{equation}
	By Lemma \ref{higher‐derivative-nearest} and $u\in C_c\left(\mathbb{Z}^d\right)$, we obtain that	
	\[\lim\limits_{s\rightarrow 0^+}\sup_{x\in \mathbb{Z}^d}\frac{1}{\mu(x)}
	\sum_{y\ne x} W_s^{(1)}(x,y)\,\bigl|u(y)\bigr|=0.\]
	By the bound \eqref{gaussian} and the dominated convergence theorem, together with the fact that \(u\in C_c(\mathbb{Z}^d)\) is supported on only finitely many vertices, we obtain
	\[
	\lim_{s\to 0^+}\frac{1}{\mu(x)}
	\sum_{y\neq x} W_s^{(2)}(x,y)\,\bigl|u(y)\bigr| \;=\; 0.
	\]
	By Lemma \ref{stupbound}, we get
	\[
	\begin{aligned}
		\frac{1}{\mu(x)}\sum_{y\neq x}W_s(x,y)-1=&\frac{s}{\Gamma(1-s)}\,
		\int_0^{1}\left(1-\mu(x)p(t,x,x)\right)\,t^{-1-s}\,dt\\+&\frac{s}{\Gamma(1-s)}\,
		\int_1^{\infty}\left(1-\mu(x)p(t,x,x)\right)\,t^{-1-s}\,dt-1.
	\end{aligned}
	\]
	Note that
	\[\begin{aligned}
		&	\frac{s}{\Gamma(1-s)}\,
		\int_1^{\infty}\left(1-\mu(x)p(t,x,x)\right)\,t^{-1-s}\,dt-1\\=&-\frac{s}{\Gamma\left(1-s\right)}\int_{1}^{\infty}\mu(x)p(t,x,x)t^{-1-s}dt+\frac{s}{\Gamma(1-s)}\int_1^{\infty}t^{-1-s}dt-1\\=&-\frac{s}{\Gamma\left(1-s\right)}\int_{1}^{\infty}\mu(x)p(t,x,x)t^{-1-s}dt+\frac{1}{\Gamma(1-s)}-1.
	\end{aligned}\]
	Again, by \eqref{gaussian}, we obtain that 
	\[\lim\limits_{s\rightarrow 0^+}\left|\frac{1}{\mu(x)}\sum_{y\ne x}W_s(x,y)\,
	-1\right|=0.\]
	Therefore, for every \(u\in C_c(\mathbb{Z}^d)\), 
	\[
	\lim_{s\to0^+}(-\Delta)^s u(x) = u(x),\:\:x\in \mathbb{Z}^d.
	\]
\qed

We have already given an pointwise expression for the logarithmic Laplacian on infinite graphs via the Bochner integral formula, see Theorem \ref{pointlog11}.  We now exploit the fact that \(\log\left(-\Delta\right)\) appears as the first‐order term in the Taylor expansion of \((-\Delta)^s\) at \(s=0\) 
	\[
\left(\log\left(-\Delta\right)u\right)(x):=\left.\frac{d}{ds}\right|_{s=0}(-\Delta)^s u(x),
\quad x\in \mathbb{Z}^d.
\]
to derive an alternative pointwise representation proof.

\begin{theorem}\label{pointwise}
	Let \(u\in C_c(\mathbb{Z}^d)\). For each \(x\in\mathbb{Z}^d\), one has the pointwise representation
\begingroup\small	\[
\left(\log\left(-\Delta\right)u\right)(x)
	=\frac{1}{\mu(x)}
	\sum_{\substack{y\in\mathbb{Z}^d,\:y\neq x}}
	W_{\log}(x,y)\bigl(u(x)-u(y)\bigr)
	\;-\;
	\frac{1}{\mu(x)}
	\sum_{y\in\mathbb{Z}^d}
	W(x,y)\,u(y)
	\;+\;
	\Gamma'(1)\,u(x),
	\]\endgroup
	where the kernels
	\[
	W_{\log}(x,y)
	:=\mu(x)\,\mu(y)\int_{0}^{1}p(t,x,y)\,t^{-1}\,dt,
	\quad
	W(x,y)
	:=\mu(x)\,\mu(y)\int_{1}^{\infty}p(t,x,y)\,t^{-1}\,dt
	\]
	are symmetric and positive.
\end{theorem}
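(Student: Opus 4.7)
The plan is to exploit the derivative-at-zero characterization
\[
\log(-\Delta)u(x)=\lim_{s\to 0^+}\frac{(-\Delta)^s u(x) - u(x)}{s},
\]
which is well-posed thanks to Proposition~\ref{slx11111}. Combining this with the pointwise formula \eqref{eq:kernel-form} for $(-\Delta)^s$, I rewrite
\[
(-\Delta)^s u(x) - u(x) = u(x)\Bigl(\tfrac{1}{\mu(x)}\sum_{y\neq x}W_s(x,y)-1\Bigr) \;-\; \tfrac{1}{\mu(x)}\sum_{y\neq x}W_s(x,y)\,u(y),
\]
divide by $s$, and let $s\to 0^+$. Because $u\in C_c(\mathbb{Z}^d)$, the off-diagonal sum is effectively finite, and for each fixed $y\neq x$ dominated convergence (using the bound $p(t,x,y)=O(t)$ as $t\to 0^+$ provided by Lemma~\ref{rhyjx}, together with the integrability at infinity) yields
\[
\frac{W_s(x,y)}{s}=\frac{\mu(x)\mu(y)}{\Gamma(1-s)}\int_0^\infty p(t,x,y)\,t^{-1-s}\,dt\;\xrightarrow[s\to 0^+]{}\;W_{\log}(x,y)+W(x,y).
\]

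The delicate ingredient is the coefficient of $u(x)$. Using stochastic completeness I recast
\[
\tfrac{1}{\mu(x)}\sum_{y\neq x}W_s(x,y)=\tfrac{s}{\Gamma(1-s)}\int_0^\infty\bigl(1-\mu(x)p(t,x,x)\bigr)\,t^{-1-s}\,dt,
\]
split the integral at $t=1$, and exploit $\int_1^\infty t^{-1-s}dt=s^{-1}$ to extract an explicit $1/\Gamma(1-s)$ factor. After subtracting $1$ and dividing by $s$, the result decomposes into three pieces
\[
\tfrac{1}{\Gamma(1-s)}\int_0^1\tfrac{1-\mu(x)p(t,x,x)}{t^{1+s}}\,dt \;+\;\tfrac{1/\Gamma(1-s)-1}{s}\;-\;\tfrac{\mu(x)}{\Gamma(1-s)}\int_1^\infty p(t,x,x)\,t^{-1-s}\,dt.
\]
Passing to $s\to 0^+$ and invoking the expansion $1/\Gamma(1-s)=1+\Gamma'(1)s+O(s^2)$, these converge respectively to $\tfrac{1}{\mu(x)}\sum_{y\neq x}W_{\log}(x,y)$ (by identity \eqref{wlog}), to $\Gamma'(1)$, and to $-\tfrac{W(x,x)}{\mu(x)}$.

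Assembling the two contributions and regrouping, the diagonal coefficient $\tfrac{W_{\log}(x,y)}{\mu(x)}u(x)$ combines with $-\tfrac{W_{\log}(x,y)}{\mu(x)}u(y)$ to produce the symmetric difference $W_{\log}(x,y)\bigl(u(x)-u(y)\bigr)$, while the term $-\tfrac{W(x,x)}{\mu(x)}u(x)$ merges with the off-diagonal $-\tfrac{W(x,y)}{\mu(x)}u(y)$ contributions into the single sum $-\tfrac{1}{\mu(x)}\sum_{y\in\mathbb{Z}^d}W(x,y)u(y)$, yielding the claimed identity. The main technical obstacle is the interchange of limit and integral in the small-$t$ piece of $W_s^{(1)}$, which must be uniform as $s\to 0^+$; this is resolved via the Taylor identity \eqref{taly1} together with Lemma~\ref{stupbound}, furnishing the dominating function $C_x\,t^{-s}$ required for dominated convergence.
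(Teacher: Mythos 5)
Your proposal is correct and follows essentially the same route as the paper's proof: the same decomposition into a diagonal coefficient $A_s(x)u(x)$ and an off-diagonal sum $B_s(x)$, the same use of stochastic completeness to rewrite $\sum_{y\neq x}W_s(x,y)$ via $1-\mu(x)p(t,x,x)$, the same split at $t=1$ with $\int_1^\infty t^{-1-s}\,dt=s^{-1}$ producing the $\Gamma'(1)$ term, and the same dominated-convergence passages justified by the small-time Taylor bound and the long-time Gaussian decay. The only cosmetic difference is that you invoke Lemma~\ref{rhyjx} for the $p(t,x,y)=O(t)$ bound where the paper cites estimate \eqref{444}; both suffice.
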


\begin{proof}
	Fix \(x\in\mathbb{Z}^d\) and \(u\in C_c(\mathbb{Z}^d)\).  We write
	\[
	\frac{(-\Delta)^s u(x)-u(x)}{s}
	=\frac{1}{s}\Bigl[\frac{s}{\Gamma(1-s)}
	\sum_{y\ne x}\mu(y)\int_0^\infty p(t,x,y)\,t^{-1-s}dt\,(u(x)-u(y)) - u(x)\Bigr].
	\]
	Split the sum into two terms:
	\[
	\frac{(-\Delta)^s u(x)-u(x)}{s}
	\;=\;
	A_s(x)u(x)\;-\;B_s(x),
	\]
	where
	\[
	A_s(x)
	=\frac{1}{s}\Biggl[
	\frac{s}{\Gamma(1-s)}
	\sum_{y\neq x}\mu(y)\int_{0}^{\infty}p(t,x,y)\,t^{-1-s}dt
	\;-\;1
	\Biggr],
	\]
	and by Lemma \ref{stupbound} and dominated convergence this becomes
	\[
	A_s(x)
	=\frac{1}{s}\Biggl[
	\frac{s}{\Gamma(1-s)}
	\int_{0}^{\infty}\bigl(1-\mu(x)\,p(t,x,x)\bigr)\,t^{-1-s}dt
	\;-\;1
	\Biggr].
	\]
	The other term is
	\[
	B_s(x)
	=\frac{1}{\Gamma(1-s)}
	\sum_{y\neq x}\mu(y)\,u(y)
	\int_{0}^{\infty}p(t,x,y)\,t^{-1-s}dt.
	\]
	Note that
	\[
	\int_0^\infty p(t,x,y)\,t^{-1-s}dt
	=\int_0^1 p(t,x,y)\,t^{-1-s}dt
	+\int_1^\infty p(t,x,y)\,t^{-1-s}dt.
	\]
	On \([0,1]\), by \eqref{444} when $s$ is sufficiently small
	\[
	p(t,x,y)\,t^{-1-s}\lesssim
	t^{\frac{d(x,y)}2-1},
	\]
	which is integrable.  On \([1,\infty)\), by \eqref{gaussian}
	\begin{equation}\label{dsjgj}
		p(t,x,y)\,t^{-1-s}\lesssim t^{-1-\frac{d}{2}},
	\end{equation}
	also integrable.  So the dominated convergence theorem gives
	\[
	\lim_{s\to0^+}\int_0^\infty p(t,x,y)\,t^{-1-s}\,dt
	=\int_0^\infty p(t,x,y)\,t^{-1}\,dt, \:\:x\ne y.
	\]
	Since \(u\) has compact support, the sum over \(y\neq x\) is finite. Hence as \(s\to0^+\),
	\begin{equation}\label{pp1}
		B_s(x)
		\rightarrow
		\sum_{y\neq x}\mu(y)\,u(y)
		\int_0^\infty p(t,x,y)\,t^{-1}dt.
	\end{equation}

	Next, we consider $A_{s}\left(x\right).$ By Lemma \ref{stupbound}, we have
\begin{equation}
	\begin{aligned}\label{asx}
		A_s\left(x\right)=&\frac{1}{\Gamma\left(1-s\right)}\int_{0}^{1}\bigl(1-\mu(x)\,p(t,x,x)\bigr)\,t^{-1-s}dt-\frac{1}{\Gamma\left(1-s\right)}\int_{1}^{\infty}\mu(x)p(t,x,x)t^{-1-s}dt\\+&\frac{1}{s}\left(\frac{s}{\Gamma(1-s)}\int_1^{\infty}t^{-1-s}dt-1\right)
	\end{aligned}
\end{equation}
	The last term simplifies using \(\int_1^\infty t^{-1-s}\,dt = 1/s\), yielding
	\begin{equation} \label{pp2}
		\frac{1}{s} \left( \frac{1}{\Gamma(1-s)} - 1 \right) \xrightarrow[]{s \to 0^+} -\gamma = \Gamma'(1).
	\end{equation}
	For the second term, the estimate \eqref{dsjgj} implies integrability of the integrand, so the dominated convergence theorem yields
	\begin{equation}\label{pp3}
		-\frac{1}{\Gamma(1-s)}\int_{1}^{\infty}\mu(x)\,p(t,x,x)\,t^{-1-s}\,dt
		\;\xrightarrow[]{s\to0^+}\;
		-\int_{1}^{\infty}\mu(x)\,p(t,x,x)\,t^{-1}\,dt.
	\end{equation}
	For the first term, using the expansion
	\[
	\mu(x) p(t,x,x) = 1 + t\mu(x)\,p'(\tau,x,x), \quad \tau \in (0,1),
	\]
	and by Levi lemma and Lemma \ref{higher‐derivative-nearest}
	\begin{equation}
		\begin{aligned}\label{pp4}
			\frac{1}{\Gamma(1-s)}\int_{0}^{1}\bigl(1-\mu(x)\,p(t,x,x)\bigr)\,t^{-1-s}\,dt
			\;\xrightarrow[]{s\to0^+}\;
			\int_{0}^{1}\bigl(1-\mu(x)\,p(t,x,x)\bigr)\,t^{-1}\,dt,
		\end{aligned}
	\end{equation}
	Combining \eqref{pp1}, \eqref{pp2}, \eqref{pp3}, and \eqref{pp4}, we conclude that
	\[
	\begin{aligned}
		L_{\Delta} u(x)
		&= -\sum_{y \ne x} \mu(y)\,u(y) \int_0^\infty p(t,x,y)\,t^{-1} \,dt
		+ \Gamma'(1)\,u(x) \\
		&\quad - \mu(x)u(x) \int_1^\infty \,p(t,x,x)\,t^{-1} \,dt
		+ u(x) \int_0^1 \bigl(1 - \mu(x)\,p(t,x,x)\bigr)\,t^{-1} \,dt.
	\end{aligned}
	\]
	By Lemma \ref{stupbound}, this matches the representation
	\[
	L_{\Delta} u(x)
	= \frac{1}{\mu(x)} \sum_{y \ne x} W_{\log}(x,y)\,(u(x) - u(y))
	- \frac{1}{\mu(x)} \sum_{y \in \mathbb{Z}^d} W(x,y)\,u(y)
	+ \Gamma'(1)\,u(x),
	\]
	with
	\[
	W_{\log}(x,y) := \mu(x)\mu(y) \int_0^1 p(t,x,y)\,t^{-1} \,dt,
	\quad
	W(x,y) := \mu(x)\mu(y) \int_1^\infty p(t,x,y)\,t^{-1} \,dt.
	\]
\end{proof}

Finally, we establish the integrability of the logarithmic Laplacian and prove stronger \(\ell^p\)-convergence, analogous to the corresponding results in \(\mathbb{R}^n\), see \cite[Theorem 1.1]{chen2019dirichlet}.

	\vspace{1\baselineskip}
	
\noindent \textbf{Proof of Theorem \ref{thm:lplimit11}:}

	Recall from Theorem \ref{pointwise} that, for each \(x\),
	\[
	L_{\Delta}u(x)
	= A(x) - B(x) + \Gamma'(1)\,u(x),
	\]
	where
	\[
	A(x)
	=\frac{1}{\mu(x)}
	\sum_{y\neq x}W_{\log}(x,y)\bigl(u(x)-u(y)\bigr),
	\quad
	B(x)
	=\frac{1}{\mu(x)}
	\sum_{y}W(x,y)\,u(y).
	\]
	
	\medskip\noindent\emph{Step 1: \(L_{\Delta}u\in\ell^p\).}  Since \(u\) is finitely supported, \(\Gamma'(1)\,u\in\ell^p\) trivially.  
	
	For the “long‐range’’ term \(B\), Proposition \ref{prop:Wxy11} gives
	\[
	W(x,y)\lesssim d(x,y)^{-d},
	\]
	so
	\[
	|B(x)|
	\;\lesssim\;
	\sum_{y}|u(y)|\,\mu(y)\,d(x,y)^{-d}
	\;+\;|u(x)|.
	\]
	Setting \(g_y(x)=d(x,y)^{-d}\) (zero at \(x=y\)) and using that \(g_y\in\ell^p\) if and only if \(p>1\), one sees the finite support of \(u\) forces \(B\in\ell^p\) for all \(1<p\le\infty\).  
	
	For the “log‐short’’ term \(A\), Proposition \ref{prop:Wlog11} shows
	$$\,W_{\log}(x,y)\lesssim d(x,y)^{-1}(e/d(x,y))^{d(x,y)},$$
	whence by a similar shell‐counting argument
	\(\sum_{y\neq x}W_{\log}(x,y)\in\ell^\infty\).  Since \(u\) is finitely supported, a finite linear combination of such translates lies in every \(\ell^p\). 
	
\medskip\noindent\emph{Step 2: Convergence of the difference quotient.}  
 To upgrade this to \(\ell^p\)-convergence, we exhibit a single nonnegative function \(F\in\ell^p(\mathbb{Z}^d),\:1<p\le \infty\) with
\[
\biggl|\frac{(-\Delta)^s u(x)-u(x)}{s}\biggr|
\;\le\;
F(x)
\quad\text{for all sufficiently small }s>0.
\]

From the representation in the proof of Theorem \ref{pointwise} one sees that
\[
\frac{(-\Delta)^s u(x)-u(x)}{s}
= A_s(x)u(x) - B_s(x),
\]
where
\[
	A_s(x)
=\frac{1}{s}\Biggl[
\frac{s}{\Gamma(1-s)}
\int_{0}^{\infty}\bigl(1-\mu(x)\,p(t,x,x)\bigr)\,t^{-1-s}dt
\;-\;1
\Biggr],
\]
\[
B_s(x)
=\frac{1}{\Gamma\left(1-s\right)}\sum_{y\ne x} \mu(y)u(y)\!\int_0^\infty p(t,x,y)\,t^{-1-s}\,dt.
\]
Similar to the proof of Proposition \ref{prop:Wxy11}, we obtain that
\[\begin{aligned}
	\left|B_{s}\left(x\right)\right|\lesssim&  \sum_{y\ne x}u\left(y\right)\int_0^{\infty}p\left(t,x,y\right)t^{-1-s}dt \\ \lesssim& \sum_{y\ne x}u\left(y\right)\left(\int_0^{1}p\left(t,x,y\right)t^{-1-s}dt+\int_1^{\infty}p\left(t,x,y\right)t^{-1}dt\right)\\ \lesssim &  \sum_{y\ne x}u\left(y\right)d\left(x,y\right)^{-d}.
\end{aligned}\]

Next, we analysis $A_s\left(x\right).$ By (\ref{asx}) and (\ref{dsjgj}), we have
\[\begin{aligned}
	A_s\left(x\right)\lesssim&\int_{0}^{1}\bigl(1-\mu(x)\,p(t,x,x)\bigr)\,t^{-1-s}dt+\int_{1}^{\infty}\mu(x)p(t,x,x)t^{-1-s}dt+1\\ \lesssim &\int _0^1 t^{-s}dt+\int_1^{\infty}t^{-\frac{d}{2}-1}dt+1\le c
\end{aligned}\]
Thus, $A_s \in \ell^{\infty}.$ Set $F\left(x\right)=cu(x)+c \sum_{y\ne x}u\left(y\right)d\left(x,y\right)^{-d}\in \ell^p$ for $1<p\le \infty.$ By the dominated convergence theorem, for $1<p<\infty$
\[\lim\limits_{s\rightarrow 0^+}\sum_{x\in \mathbb{Z}^d}\left| \frac{(-\Delta)^{s}u(x)-u(x)}{s}-L_{\Delta}u\left(x\right)\right|^p=0.\]
Since $||u||_{\ell^\infty}\lesssim ||u||_{\ell^p}$,
\[\lim\limits_{s\rightarrow 0^+}||\frac{(-\Delta)^{s}u-u}{s}-L_{\Delta}u||_{\infty}=0.\]

\subsection{Fourier Transform and Diffusion Kernels}
In order to apply the discrete Fourier transform, we work on the standard lattice graph \(\mathbb{Z}^d\) equipped with unit vertex measure and unit edge weights. In this translation‐invariant setting the Laplacian and its fractional powers are diagonalized by plane waves.

 For any \(u\in \ell^{1}(\mathbb{Z}^d)\), define the unitary Fourier transform \(\widehat u:[-\pi,\pi]^d\to\mathbb{C}\) by  
\[
\widehat u(\xi)
=\frac{1}{(2\pi)^{d/2}}
\sum_{x\in\mathbb{Z}^d}u(x)e^{-i\langle x,\xi\rangle},
\quad
\xi\in[-\pi,\pi]^d,
\]
where \(\langle x,\xi\rangle=\sum_{j=1}^d x_j\,\xi_j\). Thus, $\widehat u\in \ell^{\infty}\left([-\pi,\pi]^d\right).$ The inversion formula is then
\[
u(x)
=\frac{1}{(2\pi)^{d/2}}
\int_{[-\pi,\pi]^d}\widehat u(\xi)\,e^{i\langle x,\xi\rangle}\,d\xi,
\]

We now define $(-\Delta)^s$ for $0<s<1$ via the Bochner integral formula:
\begin{equation}\label{bochfrac11}
	(-\Delta)^{s}u=\frac{s}{\Gamma(1-s)}
	\int_{0}^{\infty}\bigl(u-e^{t\Delta}u\bigr)\,t^{-1-s}\,dt,u\in \ell^{\infty}(\mathbb{Z}^d).
\end{equation}

 Let
$$
\Phi(\xi)=\sum_{i=1}^d\bigl(2-2\cos\xi_i\bigr).
$$
It is not hard to see that $\left(-\Delta\right)^{s}u \in \ell^1(\mathbb{Z}^d)$ if $u\in\ell^1(\mathbb{Z}^d)$. Then, as shown in \cite{wang2023eigenvalue}, for every $u\in\ell^1(\mathbb{Z}^d)$ and all $\xi\in[-\pi,\pi]^d$,
$$
\widehat{(-\Delta)^s u}(\xi)
=\Phi(\xi)^s\,\widehat u(\xi).
$$
In other words, in Fourier space the fractional Laplacian is simply the multiplier $\Phi(\xi)^s$.

\begin{proposition}
	Let \(u\in C_c(\mathbb{Z}^d)\), then the Fourier transform of the logarithmic Laplacian satisfies
	\[
	\widehat{\log\left(-\Delta\right)u}(\xi)
	=\ln\!\bigl(\Phi(\xi)\bigr)\,\widehat u(\xi)\quad \text{in}\:\:\ell^2([-\pi,\pi]^d)
	\quad \xi\in[-\pi,\pi]^d.
	\]
\end{proposition}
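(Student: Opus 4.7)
The plan is to pass from the established Fourier multiplier identity for $(-\Delta)^{s}$ to the logarithmic one by letting $s\to 0^{+}$, transferring convergence from the spatial side to the Fourier side via Plancherel. Concretely, I would fix $u\in C_c(\mathbb{Z}^d)\subset \ell^2(\mathbb{Z}^d)$ and invoke Theorem \ref{thm:lplimit11} with $p=2$ to obtain
\[
\frac{(-\Delta)^{s} u-u}{s}\;\longrightarrow\;\log(-\Delta)\,u\quad\text{in }\ell^{2}(\mathbb{Z}^d),
\]
and then apply Parseval's formula to conclude that the corresponding Fourier transforms converge in $\ell^{2}([-\pi,\pi]^d)$, in particular to $\widehat{\log(-\Delta)u}$.

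The second ingredient is the fractional Fourier multiplier identity already recorded in the preceding paragraph: since $u\in C_c(\mathbb{Z}^d)\subset \ell^{1}(\mathbb{Z}^d)$ and $(-\Delta)^{s}u\in \ell^{1}(\mathbb{Z}^d)$,
\[
\widehat{\frac{(-\Delta)^{s} u-u}{s}}(\xi)
\;=\;\frac{\Phi(\xi)^{s}-1}{s}\,\widehat{u}(\xi).
\]
It therefore suffices to show that the right‐hand side converges to $\ln\bigl(\Phi(\xi)\bigr)\widehat{u}(\xi)$ in $\ell^{2}([-\pi,\pi]^d)$; uniqueness of $L^{2}$ limits will then identify $\widehat{\log(-\Delta)u}$ as claimed.

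For this final convergence, pointwise convergence of the multiplier to $\ln \Phi(\xi)$ for a.e.\ $\xi$ is immediate. To dominate, I would use the elementary inequality
\[
\biggl|\frac{a^{s}-1}{s}\biggr|\;\le\;C\,|\ln a|,\qquad a\in(0,4d],\; s\in(0,\tfrac12],
\]
obtained by writing $a^{s}-1=\int_{0}^{s}a^{\tau}\ln a\,d\tau$ and bounding $a^{\tau}\le\max(1,a^{s})$. Since $0\le \Phi(\xi)\le 4d$ and $\widehat{u}$ is a bounded trigonometric polynomial (as $u$ has finite support), one obtains the uniform bound
\[
\biggl|\frac{\Phi(\xi)^{s}-1}{s}\,\widehat{u}(\xi)\biggr|\;\le\;C\,\|\widehat{u}\|_{\infty}\,|\ln\Phi(\xi)|.
\]
Dominated convergence then yields the desired $\ell^{2}([-\pi,\pi]^d)$ convergence.

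The main subtlety is the $L^{2}$–integrability of $\ln \Phi(\xi)$ on $[-\pi,\pi]^d$. This hinges on the fact that $\Phi(\xi)\sim |\xi|^{2}$ as $\xi\to 0$, so that $\ln\Phi$ has only a logarithmic singularity at the origin; the integral $\int_{0}^{1}|\ln r|^{2}r^{d-1}\,dr$ is then finite for every $d\ge 1$, and $\Phi$ is bounded below away from the lattice zeros so no other singularities arise. Once this integrability is in hand, the argument is a routine combination of Plancherel with the $\ell^{p}$–convergence already established in Theorem \ref{thm:lplimit11}.
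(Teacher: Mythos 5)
Your proposal is correct and follows essentially the same route as the paper: invoke Theorem \ref{thm:lplimit11} with $p=2$, use unitarity of the discrete Fourier transform, apply the multiplier identity $\widehat{(-\Delta)^s u}=\Phi^s\widehat u$, and pass to the limit $s\to 0^+$. Your dominated-convergence argument with the bound $\bigl|\tfrac{\Phi(\xi)^s-1}{s}\bigr|\le C\,|\ln\Phi(\xi)|$ and the square-integrability of $\ln\Phi$ near $\xi=0$ is a welcome elaboration of the final step, which the paper states more tersely.
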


\begin{proof}
	By Theorem \ref{thm:lplimit11}, 
	\[
	\log\left(-\Delta\right)u
	=\lim_{s\to0^+}\frac{(-\Delta)^s u - u}{s}
	\]
	in \(\ell^p,\:1<p\le \infty\) (in particular in \(\ell^2\)), and the discrete Fourier transform
	\(\mathcal F:\ell^2(\mathbb{Z}^d)\to \ell^2([-\pi,\pi]^d)\) is unitary.  Hence
	\[
	\widehat{\log\left(-\Delta\right)u}
	=\lim_{s\to0^+}
	\frac{\widehat{(-\Delta)^s u}-\widehat u}{s}
	\;\;\text{in }\ell^2.
	\]
	But for each fixed \(s\in(0,1)\),
	\[
	\widehat{(-\Delta)^s u}(\xi)
	=\Phi(\xi)^s\,\widehat u(\xi),\:\text{in}\:\:\ell^2
	\]
	so
	\[
	\frac{\widehat{(-\Delta)^s u}(\xi)-\widehat u(\xi)}{s}
	=\frac{\Phi(\xi)^s-1}{s}\,\widehat u(\xi).
	\]
Thus,
	\[
	\frac{\Phi(\xi)^s-1}{s}\;\longrightarrow\;\ln\!\bigl(\Phi(\xi)\bigr)
	\quad s\to0^+
	\]
yields
	\[
	\widehat{\log\left(-\Delta\right)u}(\xi)
	=\ln\!\bigl(\Phi(\xi)\bigr)\,\widehat u(\xi),
	\]
	as claimed.
\end{proof}

	Since the fractional Laplacian acts by multiplication:
\(\widehat{(-\Delta)^s u}(\xi)=\Phi(\xi)^s\widehat u(\xi)\), the semigroup \(e^{-t(-\Delta)^s}\) has symbol \(e^{-t\,\Phi(\xi)^s}\), and its kernel is
\[
p_s(t,x,y)
=\bigl(e^{-t(-\Delta)^s}\delta_y\bigr)(x)
=\frac{1}{(2\pi)^d}
\int_{[-\pi,\pi]^d}
e^{-t\,\Phi(\xi)^s}\,e^{\,i(x-y)\cdot\xi}\,d\xi.
\]

We next collect some fundamental properties of the fractional diffusion kernel \(p_s(t,x,y)\).

\begin{remark}
For each fixed \(0<s<1\), the following hold:
	\begin{enumerate}
		\item[(i)] \emph{Symmetry and Positivity:} 
		\(p_s(t,x,y)=p_s(t;y,x)>0\) for all \(t>0\) and \(x,y\in \mathbb{Z}^d\).
		\item[(ii)] \emph{Smoothness in Time:} 
		\(t\mapsto p_s(t,x,y)\in C^\infty([0,\infty))\).
		\item[(iii)] \emph{Fractional Heat Equation:}
		\(\partial_t p_s +(-\Delta)^s_x p_s=0\).
		\item[(iv)] \emph{Semigroup Property:}
		\(p_s(t_1+t_2;x,y)=\sum_{z\in \mathbb{Z}^d}p_s(t_1;x,z)p_s(t_2;z,y)\).
		\item[(v)] \emph{Contraction:}
		\(\sum_{y}p_s(t,x,y)\le1\) for all \(t\ge0\).
		\item[(vi)] \emph{Delta‐Limit as \(t\to0\):}
		\(p_s(t,x,y)\to\delta_{x,y}\) on \(\mathbb{Z}^d\).
	\end{enumerate}
\end{remark}

Now, we give the proof of Proposition \ref{prop:large_time11} and Proposition \ref{prop:tail_asymptotic11}. 
	\vspace{1\baselineskip}
	
\noindent \textbf{Proof of Proposition \ref{prop:large_time11}:}

	Recall the Fourier representation
	\[
	p_s(t,x,y)
	=\frac{1}{(2\pi)^d}
	\int_{[-\pi,\pi]^d}e^{-t\,\Phi(\xi)^s}\,e^{\,i(x-y)\cdot\xi}\,d\xi,
	\]
	where \(\Phi(\xi)=\sum_{j=1}^d(2-2\cos\xi_j)\ge0\) and \(\Phi(\xi)=|\xi|^2+o\left(|\xi|^2\right)\) as \(\left|\xi\right|\to0\).
	
	\medskip\noindent\textbf{(i) Uniform decay.}
	Dropping the oscillatory factor,
	\[
	p_s(t,x,y)\le\frac{1}{(2\pi)^d}
	\int_{[-\pi,\pi]^d}e^{-t\,\Phi(\xi)^s}\,d\xi.
	\]
Choose \(\delta>0\) small enough that 
\(\Phi(\xi)\ge\tfrac12|\xi|^2\) for all \(|\xi|\le\delta\).  Since \(\Phi\) is continuous and \(\Phi(\xi)>0\) on the compact set \(\{|\xi|\ge\delta\}\), there is \(C_0>0\) so that \(\Phi(\xi)\ge C_0\) whenever \(|\xi|\ge\delta\).  We split
\[
\int_{[-\pi,\pi]^d}e^{-t\,\Phi(\xi)^s}\,d\xi
=\int_{|\xi|\le\delta}e^{-t\,\Phi(\xi)^s}\,d\xi
+\int_{\{\xi\in[-\pi,\pi]^d:|\xi|>\delta\}}e^{-t\,\Phi(\xi)^s}\,d\xi.
\]
On \(|\xi|\le\delta\), note that
\[
e^{-t\,\Phi(\xi)^s}
\le e^{-t\,( \tfrac12|\xi|^2 )^s}
=e^{-t\,2^{-s}|\xi|^{2s}},
\]
so
\[
\int_{|\xi|\le\delta}e^{-t\,\Phi(\xi)^s}\,d\xi
\le\int_{\mathbb{R}^d}e^{-t\,2^{-s}|\xi|^{2s}}\,d\xi.
	\]
Set
$$
\eta = (t\,2^{-s})^{1/(2s)}\,\xi,
$$
then
$$
\int_{\mathbb{R}^d}e^{-t\,2^{-s}|\xi|^{2s}}\,d\xi
=\int_{\mathbb{R}^d}e^{-|\eta|^{2s}}
\,(t\,2^{-s})^{-d/(2s)}\,d\eta\sim t^{-\,\tfrac{d}{2s}}, \quad t>0.
$$

On \(|\xi|>\delta\), since
\[
e^{-t\,\Phi(\xi)^s}
\le e^{-t\,C_0^s},
\]
\[
\int_{\{\xi\in[-\pi,\pi]^d:|\xi|>\delta\}}e^{-t\,\Phi(\xi)^s}\,d\xi
\le(2\pi)^d\,e^{-C_0^s\,t}\sim e^{-C_0^{s}t},\quad t>0\]
which is negligible compared to \(t^{-d/(2s)}\).  

Combining these estimates gives
\[
\int_{[-\pi,\pi]^d}e^{-t\,\Phi(\xi)^s}\,d\xi
\;\lesssim\;
t^{-d/(2s)},
\]
and therefore \(\sup_{x,y}p_s(t,x,y)\lesssim t^{-d/(2s)}\).  

	\medskip\noindent\textbf{(ii) Asymptotic constant.}
	Fix a finite set of relative positions \(h=x-y\).  Split the integral into
	\[
	I_1=\int_{|\xi|\le R}\quad\text{and}\quad I_2=\int_{|\xi|>R}
	\]
where \(R>0\) is sufficiently small. Then: 
	
	(i) On \(|\xi|>R\), \(\Phi(\xi)^s\ge C>0\), so
	\[
	|I_2|\le \int_{|\xi|>R}e^{-tC^{s}}\,d\xi=\;o\bigl(t^{-d/(2s)}\bigr),\quad t\rightarrow \infty.
	\]
	
(ii) On \(|\xi|\le R\), set \(\eta = t^{1/(2s)}\,\xi\).  Then
	\[
	I_1
	=\int_{|\xi|\le R}e^{-t\,\Phi(\xi)^s}e^{i\,h\cdot\xi}\,d\xi
	=t^{-d/(2s)}\int_{|\eta|\le t^{1/(2s)}R}e^{-t\left(\Phi(t^{-\frac{1}{2s}}\eta)\right)^s}e^{i\,h\cdot \eta\,t^{-1/(2s)}}\,d\eta.
	\]
	As \(t\to\infty\), the phase \(h\cdot\eta\,t^{-1/(2s)}\to0\).  By dominated convergence,
	\[
	I_1
	\sim t^{-d/(2s)}
	\int_{\mathbb{R}^d}e^{-|\eta|^{2s}}\,d\eta,\quad t\rightarrow\infty.
	\]
	Combining contributions,
	\[
	p_s(t,x,y)
	=\frac{1}{(2\pi)^d}(I_1+I_2)
	\sim\frac{1}{(2\pi)^d}\Bigl(\int_{\mathbb{R}^d}e^{-|\eta|^{2s}}\,d\eta\Bigr)\,
	t^{-d/(2s)}=\frac{\pi^{-d/2}}{s2^{d}\Gamma(\tfrac d2)}\;\Gamma\!\Bigl(\tfrac{d}{2s}\Bigr) t^{-d/(2s)}.
	\]  This completes the proof. \qed

\vspace{1\baselineskip}

\noindent \textbf{Proof of Proposition \ref{prop:tail_asymptotic11}:}

Set $\mathbb{T}^d=[-\pi,\pi]^d, k=x-y$ and write
	\[
	f(\xi)=e^{-t\,\Phi(\xi)^s},\qquad
	\Phi(\xi)=\sum_{j=1}^d\bigl(2-2\cos\xi_j\bigr).
	\]
	Choose \(0<\delta\ll1\) and a radial cutoff 
	\(\chi\in C_c^\infty(\mathbb{T}^d)\) with
	\[
	\chi(\xi)=
	\begin{cases}
		1,&|\xi|\le\tfrac12,\\
		0,&|\xi|\ge1.
	\end{cases}
	\]
	 Decompose
	\begingroup\small\[
	p_s(t,x,y)=\frac{1}{(2\pi)^d}
	\int_{\mathbb{T}^d}
\left(f(\xi)-1\right)\chi\left(\frac{\xi}{\delta}\right)e^{\,ik\cdot\xi}\,d\xi+\frac{1}{(2\pi)^d}
\int_{\mathbb{T}^d}
\left(f(\xi)-1\right)\left(1-\chi\left(\frac{\xi}{\delta}\right)\right)e^{\,ik\cdot\xi}\,d\xi
	\]\endgroup
	{\bf (i) Smooth remainder.}  On \(\{|\xi|\ge\tfrac\delta2\}\),
	\(\Phi(\xi)\ge c>0\), so 
	\[\left(f(\xi)-1\right)\left(1-\chi\left(\frac{\xi}{\delta}\right)\right)\in C^\infty(\mathbb{T}^d)\] 
	and is $2\pi$-periodic function. Repeated integration by parts shows \cite[Theorem 3.2.9]{grafakos2008classical}
	\[
\int_{\mathbb{T}^d}
\left(f(\xi)-1\right)\left(1-\chi\left(\frac{\xi}{\delta}\right)\right)e^{\,ik\cdot\xi}\,d\xi
	=O\bigl(|k|^{-N}\bigr),
	\quad\forall N.
	\]
\textbf{(ii) Principal part.}  Set $	r = |k|, \omega = \frac{k}{r},$ make the change of variables \(\eta = r\,\xi\).  Then
		\[
		r^{d+2s}	p_s(t,x,y)=\frac{1}{(2\pi)^d}I_1(k)+O\bigl(|k|^{-N}\bigr)\]
		where 
		\[
	\begin{aligned}
			I_1(k):=& r^{2s+d}
		\int_{|\xi|\le \delta}
	\left(f(\xi)-1\right)\chi\left(\frac{\xi}{\delta}\right)e^{\,ik\cdot\xi}\,d\xi
		\\=& r^{2s}
		\int_{|\eta|\le r\delta}
		\Bigl(f\bigl(\tfrac\eta r\bigr)-1\Bigr)\chi\left(\frac{\eta}{r\delta}\right)\,
		e^{\,i\,\omega\cdot\eta}\,
		d\eta.
	\end{aligned}
		\]
By the Taylor expansion,
\[f\left(\xi\right)-1=-t|\xi|^{2s}+O\left(|\xi|^{3s}\right),|\xi|\le \delta.\]
	 Hence,
	 \[I_{1}\left(k\right)=-t	\int_{|\eta|\le r\delta}
	|\eta|^{2s}\chi\left(\frac{\eta}{r\delta}\right)\,
	 e^{\,i\,\omega\cdot\eta}\,
	 d\eta+r^{-s}\int_{|\eta|\le r\delta}
	 O\left(|\eta|^{3s}\right)\chi\left(\frac{\eta}{r\delta}\right)\,
	 e^{\,i\,\omega\cdot\eta}\,
	 d\eta.\]
	 Completely analogous to the analysis in Lemma \ref{jieduan11}, there exists $C>0$ independent of $r$ such that
	 \[\left|\int_{|\eta|\le r\delta}
	 O\left(|\eta|^{3s}\right)\chi\left(\frac{\eta}{r\delta}\right)\,
	 e^{\,i\,\omega\cdot\eta}\,
	 d\eta\right|\le C.\]
	 Therefore, 
	\[
	\lim\limits_{|k|\rightarrow \infty}I_1(k)\;\longrightarrow\;
	-\,t\lim\limits_{N\rightarrow \infty}
	\int_{\mathbb{R}^d}|\eta|^{2s}\chi\left(\frac{\eta}{N}\right)\,e^{i\omega\cdot\eta}\,d\eta=-tA_{s,d},
	\]
	which is the desired result.\qed

The logarithmic Laplacian \(\log(-\Delta)\) corresponds to multiplication by \(\ln\Phi(\xi)\) in the Fourier domain.  Hence the log‐heat semigroup
\[
e^{-t\,\log(-\Delta)}
\]
acts by \(\exp\bigl(-t\ln\Phi(\xi)\bigr)=\Phi(\xi)^{-t}\).  By Fourier inversion, its formal diffusion kernel is
\[
p_{\log}(t,x,y)
=\bigl(e^{-t\,\log(-\Delta)}\delta_{y}\bigr)(x)
=\frac{1}{(2\pi)^d}
\int_{[-\pi,\pi]^d}
\Phi(\xi)^{-t}\,e^{\,i\,(x-y)\cdot\xi}\,d\xi.
\]

The only singularity of the integrand 
\(\Phi(\xi)^{-t}\) on \([-\pi,\pi]^d\) occurs at \(\xi=0\), where 
\[
\Phi(\xi)=\sum_{j=1}^d(2-2\cos\xi_j)\sim|\xi|^2
\quad(\xi\to0).
\]
Hence near \(\xi=0\) the integrand behaves like 
\(|\xi|^{-2t}\), and the integral 
\[
\int_{|\xi|<\delta}|\xi|^{-2t}\,d\xi
\sim \int_0^\delta r^{d-1-2t}\,dr
\]
converges precisely when $d-1-2t>-1\Longleftrightarrow
t<\frac d2.$
Thus, for each fixed \(x,y\) the log‐diffusion kernel integral
\[
p_{\log}(t,x,y)
=\frac1{(2\pi)^d}
\int_{[-\pi,\pi]^d}
\Phi(\xi)^{-t}\,e^{i(x-y)\cdot\xi}\,d\xi
\]
is (absolutely) convergent for $0\;\le\;t\;<\;\frac d2$
and 
\[p_{\log}(0;x,y)=\delta_{x,y};\: p_{\log}(t;x,x)=\frac1{(2\pi)^d}
\int_{[-\pi,\pi]^d}
\Phi(\xi)^{-t}d\xi<\infty.\]

In the next step, we give the proof of Proposition \ref{prop:log_blowup_and_delta11} and Proposition \ref{prop:log_offdiag_asym11}.

\vspace{1\baselineskip} 

\noindent \textbf{Proof of Proposition \ref{prop:log_blowup_and_delta11}:}

	Set $k=x-y,$ write
	\[
	p_{\log}(t,x,y)
	=\frac1{(2\pi)^d}\int_{[-\pi,\pi]^d}\Phi(\xi)^{-t}e^{i (x-y)\cdot\xi}\,d\xi
	=\frac1{(2\pi)^d}\Bigl(I_1+I_2\Bigr),
	\]
	where
	\[
	I_1=\int_{|\xi|\le\delta}\Phi(\xi)^{-t}e^{i k\cdot\xi}\,d\xi,
	\quad
	I_2=\int_{|\xi|>\delta}\Phi(\xi)^{-t}e^{i k\cdot\xi}\,d\xi,
	\]
	with a small fixed \(\delta>0\). Since \(I_2\) remains bounded as \(t\to d/2\), hence
	 \((d-2t)\,I_2\to0\).
	 
	  Next we consider $I_1.$ Set $\xi=\eta\left(d-2t\right)^{\alpha},\alpha>0,$ then
	 \[I_1=\left(d-2t\right)^{d\alpha}\int_{|\eta|\le \left(d-2t\right)^{-\alpha}\delta}\Phi(\eta\left(d-2t\right)^{\alpha})^{-t}e^{ik\cdot \eta\left(d-2t\right)^{\alpha}}d\eta.\]
	 Therefore, 
	 \[\begin{aligned}
	 	\lim\limits_{t\rightarrow \frac{d}{2}}\left(d-2t\right)I_1=&	\lim\limits_{t\rightarrow \frac{d}{2}}\left(d-2t\right)^{d\alpha+1}\int_{|\eta|\le \left(d-2t\right)^{-\alpha}\delta}\Phi(\eta\left(d-2t\right)^{\alpha})^{-t}e^{ik\cdot \eta\left(d-2t\right)^{\alpha}}d\eta\\=&	\lim\limits_{t\rightarrow \frac{d}{2}}\left(d-2t\right)^{(d-2t)\alpha+1}\int_{|\eta|\le \left(d-2t\right)^{-\alpha}\delta}|\eta|^{-2t}d\eta\\=&|\mathbb{S}^{d-1}|\lim\limits_{t\rightarrow \frac{d}{2}}\left(d-2t\right)^{(d-2t)\alpha+1}\int_{0}^ {\left(d-2t\right)^{-\alpha}\delta}r^{d-1-2t}dr\\=&|\mathbb{S}^{d-1}|\lim\limits_{t\rightarrow \frac{d}{2}}\left(d-2t\right)^{(d-2t)\alpha}\left(\left(d-2t\right)^{-\alpha}\delta\right)^{d-2t}=|\mathbb{S}^{d-1}|,
	 \end{aligned}\]
	 which is the desired result.\qed

\vspace{1\baselineskip} 

\noindent \textbf{Proof of Proposition \ref{prop:log_offdiag_asym11}:}

Set $k=x-y$ and choose \(0<\delta\ll1\) and a radial cutoff 
\(\chi\in C_c^\infty(\mathbb{T}^d)\) with
\[
\chi(\xi)=
\begin{cases}
	1,&|\xi|\le\tfrac12,\\
	0,&|\xi|\ge1.
\end{cases}
\]
Decompose
\begingroup\small\[
p_{\log}(t,x,y)=\frac{1}{(2\pi)^d}
\int_{\mathbb{T}^d}
\Phi(\xi)^{-t}\chi\left(\frac{\xi}{\delta}\right)e^{\,ik\cdot\xi}\,d\xi+\frac{1}{(2\pi)^d}
\int_{\mathbb{T}^d}
\Phi(\xi)^{-t}\left(1-\chi\left(\frac{\xi}{\delta}\right)\right)e^{\,ik\cdot\xi}\,d\xi
\]\endgroup
{\bf (i) Smooth remainder.}  On \(\{|\xi|\ge\tfrac\delta2\}\),
\(\Phi(\xi)\ge c>0\), so 
\[\Phi(\xi)^{-t}\left(1-\chi\left(\frac{\xi}{\delta}\right)\right)\in C^\infty(\mathbb{T}^d)\] 
and is $2\pi$-periodic function. Repeated integration by parts shows \cite[Theorem 3.2.9]{grafakos2008classical}
\[
\int_{\mathbb{T}^d}
\Phi(\xi)^{-t}\left(1-\chi\left(\frac{\xi}{\delta}\right)\right)e^{\,ik\cdot\xi}\,d\xi
=O\bigl(|k|^{-N}\bigr),
\quad\forall N.
\]
\textbf{(ii) Principal part.}  Set $	r = |k|, \omega = \frac{k}{r},$ make the change of variables \(\eta = r\,\xi\).  Then
\[
r^{d-2t}	p_{\log}(t,x,y)=\frac{1}{(2\pi)^d}I(k)+O\bigl(|k|^{-N}\bigr)\]
where 
	\begingroup\small\[
\begin{aligned}
	I(k):=r^{d-2t}
	\int_{|\xi|\le \delta}
\Phi(\xi)^{-t}\chi\left(\frac{\xi}{\delta}\right)e^{\,ik\cdot\xi}\,d\xi= r^{-2t}
	\int_{|\eta|\le r\delta}
\Phi(\frac{\eta}{r})^{-t}\chi\left(\frac{\eta}{r\delta}\right)\,
	e^{\,i\,\omega\cdot\eta}\,
	d\eta.
\end{aligned}
\]\endgroup
By the Taylor expansion,
\[\Phi(\xi)^{-t}=|\xi|^{-2t}\left(1+O\left(1\right)\right),|\xi|\le \delta.\]
Hence,
\[I\left(k\right)=	\int_{|\eta|\le r\delta}
|\eta|^{-2t}\chi\left(\frac{\eta}{r\delta}\right)\,
e^{\,i\,\omega\cdot\eta}\,
d\eta+r^{-2t}\int_{|\eta|\le r\delta}
O\left(1\right)\chi\left(\frac{\eta}{r\delta}\right)\,
e^{\,i\,\omega\cdot\eta}\,
d\eta.\]
Completely analogous to the analysis in Lemma \ref{jieduan11}, there exists $C>0$ independent of $r$ such that
\[\left|\int_{|\eta|\le r\delta}
O\left(1\right)\chi\left(\frac{\eta}{r\delta}\right)\,
e^{\,i\,\omega\cdot\eta}\,
d\eta\right|\le C.\]
Therefore, 
\[
\lim\limits_{|k|\rightarrow \infty}I(k)\;\longrightarrow\;
\lim\limits_{N\rightarrow \infty}
\int_{\mathbb{R}^d}|\eta|^{-2t}\chi\left(\frac{\eta}{N}\right)\,e^{i\omega\cdot\eta}\,d\eta,
\]
which is the desired result by Lemma \ref{jieduan11}.\qed

\section{Appendix: Proof of Lemma \ref{jieduan11}}

In this appendix, we give the detailed proof of Lemma \ref{jieduan11}.

\begin{proof}
	We first carry out the proof for dimensions $d \ge 2.$
	
	\textbf{\emph{(1) Existence and finiteness.}} 
	Write $\eta=r\,\theta$ with $r=|\eta|\in[0,\infty)$ and $\theta\in\mathbb{S}^{d-1}$, and denote by $d\sigma(\theta)$ the surface measure.  Then
	\[
	\int_{\mathbb{R}^d}|\eta|^{2s}\,\chi\!\Bigl(\tfrac\eta N\Bigr)\,e^{i\omega\cdot\eta}\,d\eta
	=\int_{0}^{\infty}r^{2s+d-1}\,\chi\!\Bigl(\tfrac rN\Bigr)
	\Bigl[\int_{\mathbb{S}^{d-1}}e^{\,i r(\omega\cdot\theta)}\,d\sigma(\theta)\Bigr]\,dr.
	\]
	By the classical spherical‐average identity (see \cite[B.4]{grafakos2008classical}),
	\[
	\int_{\mathbb{S}^{d-1}}e^{\,i r(\omega\cdot\theta)}\,d\sigma(\theta)
	=(2\pi)^{\frac d2}\,r^{-\frac d2+1}\,J_{\frac d2-1}(r),\:d\ge 2
	\]
	where $J_\nu$ is the Bessel function of the first kind.  Hence
	\[
	I_N
	:=\int_{\mathbb{R}^d}|\eta|^{2s}\,\chi\!\Bigl(\tfrac\eta N\Bigr)\,e^{i\omega\cdot\eta}\,d\eta
	=(2\pi)^{\frac d2}
	\int_{0}^{\infty}r^{2s+\frac d2}\,\chi\!\Bigl(\tfrac rN\Bigr)\,J_{\frac d2-1}(r)\,dr.
	\]
	We integrate by parts using the relation
	\[
	\frac{d}{dr}\bigl[r^{\nu+1}J_{\nu+1}(r)\bigr]
	=r^{\nu+1}J_{\nu}(r),
	\quad
	\nu=\frac d2-1.
	\]
	Writing $K(r)=r^{2s}\chi(r/N)$, one finds
	\[
	\int_{0}^\infty K(r)r^{\nu+1}J_{\nu}(r)\,dr
	=\bigl[K(r)\,(r^{\nu+1}J_{\nu+1}(r))\bigr]_{0}^{\infty}
	+\int_{0}^\infty K'(r)\,r^{\nu+1}J_{\nu+1}(r)\,dr.
	\]
	Recall the standard asymptotics of the Bessel function (see e.g.\cite[p.223]{olver2010nist}): 
	\begin{equation}\label{bessel}
		J_{\nu}(r)\sim r^{\nu}
		\quad r\to0,
		\qquad
		J_{\nu}(r)\lesssim O\bigl(r^{-1/2}\bigr)
		\quad r\to\infty.
	\end{equation}
	Thus, the boundary term vanishes and 
	\[
	\int_{0}^\infty K(r)r^{\nu+1}J_{\nu}(r)\,dr
	=\int_{0}^\infty\left(2sr^{2s-1}\chi\left(\frac{r}{N}\right)+\frac{r^{2s}}{N}\chi^{\prime}\left(\frac{r}{N}\right)\right)\,r^{\frac{d}{2}}J_{\frac{d}{2}}(r)\,dr.	\]
	Repeat $k$ times where $k$ is sufficiently large, we obtain that
	\[		\int_{0}^\infty K(r)r^{\nu+1}J_{\nu}(r)\,dr\sim \sum_{i=0}^{k}\frac{1}{N^{k-i}}\int_0^{\infty}r^{2s-i}\chi^{(k-i)}\left(\frac{r}{N}\right)r^{\frac{d}{2}}J_{\frac{d}{2}+k-1}(r)\,dr.\]
	By \cite[p.243]{olver2010nist},
	\[
	\int_{0}^{\infty} r^{\mu} \, J_{\nu}(r)\,dr
	\;=\;
	2^{\mu}\,
	\frac{\Gamma\!\bigl(\tfrac12\nu+\tfrac12\mu+\tfrac12\bigr)}
	{\Gamma\!\bigl(\tfrac12\nu-\tfrac12\mu+\tfrac12\bigr)},
	\quad
	\Re(\mu+\nu)>-1,\;\Re\mu<\tfrac12.
	\]
	Thus, for $2s+\frac{d-1}{2}<i< k$, we obtain that
	\begin{equation}\label{fuza1}
		\lim\limits_{N\rightarrow \infty}\frac{1}{N^{k-i}}\int_0^{\infty}r^{2s-i}\chi^{(k-i)}\left(\frac{r}{N}\right)r^{\frac{d}{2}}J_{\frac{d}{2}+k-1}(r)\,dr=0
	\end{equation}
	and for $i=k$, by the dominated convergence theorem, we have
	\begin{equation}\label{fuza2}
		\lim\limits_{N\rightarrow \infty}\int_0^{\infty}r^{2s-k}\chi\left(\frac{r}{N}\right)r^{\frac{d}{2}}J_{\frac{d}{2}+k-1}(r)\,dr=\int_0^{\infty}r^{2s-k+\frac{d}{2}}J_{\frac{d}{2}+k-1}(r)\,dr.
	\end{equation}
	For $0 \le i \le 2s+\frac{d-1}{2},$ then for sufficiently large $k$, $k-i\ge k-2d,$ by (\ref{bessel})
	\[\left|\int_0^{\infty}r^{2s-i}\chi^{(k-i)}\left(\frac{r}{N}\right)r^{\frac{d}{2}}J_{\frac{d}{2}+k-1}(r)\,dr\right|\lesssim 1+N^{2s+\frac{d+1}{2}},\quad \text{sufficiently large $N$.}\]
	Thus, if we take $k>6d$, then for  $0 \le i \le 2s+\frac{d-1}{2},$
	\begin{equation}\label{fuza3}
		\lim\limits_{N\rightarrow \infty}\frac{1}{N^{k-i}}\int_0^{\infty}r^{2s-i}\chi^{(k-i)}\left(\frac{r}{N}\right)r^{\frac{d}{2}}J_{\frac{d}{2}+k-1}(r)\,dr=0.
	\end{equation}
	Combining the results of (\ref{fuza1}), (\ref{fuza2}), and (\ref{fuza3}), we deduce that the limit
	\[J_\chi(\omega)=\lim_{N\to\infty}
	\int_{\mathbb{R}^d}|\eta|^{2s}\,\chi\!\Bigl(\frac{\eta}{N}\Bigr)\,e^{\,i\,\omega\cdot\eta}\,d\eta\]
	exists and is finite.
	
	When $d = 1,$ the existence of the limit can be verified using integration by parts together with Lemma 2 in \cite[Chapter V]{stein1970singular}. We omit the details.

	\textbf{\emph{(2) Independence of the cutoff.}} In fact, it can be seen from the proof of the existence of the limit in the first step that  the limit is independent of the choice of the cutoff function.
	
	\textbf{\emph{(3) Independence of the direction.}}  Let \(\omega,\omega'\in\mathbb S^{d-1}\).  Choose an orthogonal matrix such that \(R\omega=\omega'\).  Writing \(\eta=R\zeta\) and using \(|\eta|=|\zeta|\) gives
	\[
	\int_{\mathbb{R}^d}|\eta|^{2s}\,\chi\!\Bigl(\tfrac\eta N\Bigr)\,e^{\,i\,\omega\cdot\eta}\,d\eta
	=\int_{\mathbb{R}^d}|\zeta|^{2s}\,\chi\!\Bigl(\tfrac{R\zeta}{N}\Bigr)\,e^{\,i\,\omega'\cdot\zeta}\,d\zeta.
	\]
	Since \(\chi\) is radial, \(\chi(R\zeta/N)=\chi(\zeta/N)\), so the two integrals agree for each \(N\).  Therefore their common limit
	\(\displaystyle J_\chi(\omega)=J_\chi(\omega')\),
	showing that \(J_\chi\) is independent of direction.
\end{proof}

\section{Acknowledgements}

We thank Zhiqin Lu for his foundational insight into the functional calculus approach to the logarithmic Laplacian, Bobo Hua for his careful reading of the manuscript and constructive suggestions, and Huyuan Chen for his detailed feedback on various theoretical aspects of the logarithmic Laplacian. We sincerely thank Zuoshunhua Shi for the key idea regarding fractional spatial frequency decay, Danglin Li for valuable discussions on this topic, and Jiaxuan Wang for his insights into the asymptotic behavior of heat kernels.

	\bibliographystyle{unsrt}
	\bibliography{refs}

\end{document}